\newtheorem{thm}{Theorem}
\newtheorem{lem}[thm]{Lemma}
\newtheorem{fact}[thm]{Fact}
\newtheorem{cor}[thm]{Corollary}
\newtheorem*{thm*}{Theorem}
\newtheorem*{thmA}{Theorem A}
\newtheorem*{thmB}{Theorem B}
\newtheorem*{thmC}{Theorem C}
\newtheorem*{thmD}{Theorem D}
\numberwithin{thm}{section}
\theoremstyle{definition}
\newtheorem{defn}[thm]{Definition}
\newtheorem*{defn*}{Definition}
\newtheorem*{ack}{Acknowledgements}
\newcommand{\N}{\mathbb{N}}
\newcommand{\R}{\mathbb{R}}
\newcommand{\Z}{\mathbb{Z}}
\newcommand{\Q}{\mathbb{Q}}
\newcommand{\calR}{\mathcal{R}}
\newcommand{\calA}{\mathcal{A}}
\newcommand{\calB}{\mathcal{B}}
\newcommand{\calC}{\mathcal{C}}
\newcommand{\Diam}{\operatorname{Diam}}
\newcommand{\sprad}{\operatorname{sprad}}
\begin{document}

\author{Alexi Block Gorman}
\address
{The Fields Institute \\222 College Street\\Toronto, ON M5T 3J1\\Canada}
\email{ablockgo@fields.utoronto.ca}

\author{Christian Schulz}
\address
{Department of Mathematics\\University of Illinois at Urbana-Champaign\\1409 West Green Street\\Urbana, IL 61801}
\email{cschulz3@illinois.edu}

\subjclass[2020]{Primary 03D05 Secondary 28A80}
\keywords{B\"uchi automata, Finite automata, Fractal geometry, Hausdorff dimension, Hausdorff measure, Box-counting dimension, Entropy, Model theory, Tame geometry}

\title{Fractal dimensions of $k$-automatic sets}

\begin{abstract}
This paper seeks to build on the extensive connections that have arisen between automata theory, combinatorics on words, fractal geometry, and model theory. 
Results in this paper establish a characterization for the behavior of the fractal geometry of ``$k$-automatic'' sets, subsets of $[0,1]^d$ that are recognized by B\"uchi automata. 
The primary tools for building this characterization include the entropy of a regular language and the digraph structure of an automaton.
Via an analysis of the strongly connected components of such a structure, we give an algorithmic description of the box-counting dimension, Hausdorff dimension, and Hausdorff measure of the corresponding subset of the unit box.
Applications to definability in model-theoretic expansions of the real additive group are laid out as well.
\end{abstract}

\maketitle

\section{Introduction}
\subsection{Main Results}

In this paper, we consider the $k$-automatic subsets of $\R$, and analyze both the $k$-representations of such sets and the B\"uchi automata that recognize their base-$k$ representations.
The methods used in this paper integrate multiple perspectives previously taken regarding B\"uchi automata and $k$-automatic subsets of finite-dimensional Euclidean spaces.
These include the perspective given by viewing automata as directed graphs, as well as characterizations of $k$-regular $\omega$-languages coming from combinatorics on words.

Our primary result describes how to obtain the Hausdorff and box-counting dimensions of $k$-automatic subsets of $[0,1]^d \subseteq \R^d$ (with $d \in \N$) not quite in terms of some of the induced subautomata, but by considering slight variants thereof.
Further results include a similar mechanism for computing Hausdorff measure (in the appropriate dimension) in terms of the same variant of an induced subautomaton, as well as a characterization of which expansions of the first order structure $(\R,<,+)$ by $k$-automatic subsets of $[0,1]^d$ have definable unary sets whose Hausdorff dimension differs from its box-counting dimensions.

Recall that an automaton is ``trim'' if each state is accessible from some start state, and each state is also coaccessible from some accept state.
Below, we use $d_H$ to denote Hausdorff dimension, we use $d_B$ to denote box-counting dimension, and $h(X)$ denotes the entropy of $X$; for formal definitions of each, see section \ref{prelim}.
The following theorem is in fact a corollary in section \ref{closed} that emerges from unpacking results of Mauldin and Williams in \cite{MW88}, in conjunction with some elementary results in section \ref{entropy}.

\begin{thmA}
If $X$ is a closed $k$-automatic subset of $[0,1]^n$ recognized by closed automaton $\calA$, then $d_H(X) = d_B (X)= \frac{1}{\log (k)} h(L(\calA))$, where $L(\calA)$ is the set of strings $\calA$ recognizes.
\end{thmA}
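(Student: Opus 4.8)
The plan is to realize a closed $k$-automatic set as the attractor of a graph-directed iterated function system whose underlying digraph is (the trim part of) $\calA$, and then read off $d_H$ and $d_B$ from the Mauldin--Williams dimension formula in \cite{MW88}. First I would note that we may assume $\calA$ is trim, since passing to the trim subautomaton changes neither $L(\calA)$, nor the set $X$ it recognizes, nor $h(L(\calA))$. Write $\Sigma = \{0,1,\dots,k-1\}^n$ for the digit alphabet and let $M$ be the transition matrix of $\calA$, so $M_{pq}$ counts the transitions from state $p$ to state $q$. To each digit $\sigma \in \Sigma$ attach the similarity $\phi_\sigma\colon \R^n \to \R^n$, $\phi_\sigma(x) = (x+\sigma)/k$, a contraction of ratio $1/k$ carrying $(0,1)^n$ onto one of the $k^n$ open subcubes of side $1/k$. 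For each state $q$ let $X_q \subseteq [0,1]^n$ be the set of points having a base-$k$ expansion read by $\calA$ from $q$; the closedness hypothesis on $\calA$ is precisely what forces the $X_q$ to be compact and to satisfy the graph-directed equations $X_p = \bigcup_{p \xrightarrow{\sigma} q} \phi_\sigma(X_q)$, with $X = \bigcup_{q_0} X_{q_0}$ over start states. Because distinct $\phi_\sigma$ send the open unit cube to disjoint subcubes, this system satisfies the open set condition with witness $(0,1)^n$.

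Next I would invoke the Mauldin--Williams theorem \cite{MW88}: for a graph-directed construction with the open set condition, $d_H$ of the attractor is the unique $s \ge 0$ with $\sprad(A(s)) = 1$, where $A(s)_{pq} = \sum_{p \xrightarrow{\sigma} q}(1/k)^s$. Here $A(s) = k^{-s}M$, so $\sprad(A(s)) = k^{-s}\sprad(M)$ and the equation becomes $k^s = \sprad(M)$, giving $d_H(X) = \frac{\log \sprad(M)}{\log k}$. For the box-counting dimension I would argue directly: the $k^{-m}$-mesh cubes meeting $X$ are exactly those indexed by length-$m$ prefixes of words accepted from a start state, so $N_{k^{-m}}(X)$ is comparable to the number of such prefixes, whence $d_B(X) = \frac{1}{\log k}\lim_m \frac{1}{m}\log(\text{number of length-}m\text{ prefixes})$, a limit that exists and equals $\frac{\log\sprad(M)}{\log k}$ by Perron--Frobenius (the count lies between $c_1 \sprad(M)^m$ and $c_2 m^D \sprad(M)^m$). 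In particular $d_B(X) = d_H(X)$.

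Finally I would quote the elementary computation from section \ref{entropy} identifying the entropy of a regular language with the log of the spectral radius of the transition matrix of a trim automaton recognizing it, namely $h(L(\calA)) = \log \sprad(M)$. Combining the three identities yields $d_H(X) = d_B(X) = \frac{1}{\log k}\, h(L(\calA))$, as claimed.

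The main obstacle I anticipate is not a hard estimate but the bookkeeping in the first step: confirming that ``closed automaton recognizing a closed set'' is exactly the hypothesis under which $X$ coincides with the graph-directed attractor (rather than a proper subset, or a set differing from it on the boundary cubes), and checking that the non-uniqueness of base-$k$ expansions at the dyadic-type rationals contributes only a set of dimension strictly below $d_H(X)$ — hence invisible to both $d_H$ and $d_B$. Once $X$ is identified with the attractor and the open set condition is verified, the remainder is a routine translation between the spectral-radius language of \cite{MW88} and the entropy language of section \ref{entropy}.
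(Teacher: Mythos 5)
Your outline follows the same route as the paper's own proof: realize the closed set as a graph-directed attractor, verify the open set condition with witness $(0,1)^n$, apply Mauldin--Williams to express $d_H$ via a spectral radius, count mesh cubes by prefixes for $d_B$, and identify the spectral radius with the entropy. There is, however, a genuine gap: you never address nondeterminism, and two of your steps fail for nondeterministic $\calA$. First, the open set condition requires $\phi_\sigma(U_q)$ and $\phi_{\sigma'}(U_{q'})$ to be disjoint for \emph{every} pair of distinct edges out of a common vertex; if the automaton has two transitions out of a state $p$ on the same letter $\sigma$ (to different targets), the two images coincide rather than being disjoint, so ``distinct $\phi_\sigma$ give disjoint subcubes'' does not verify the condition. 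Second, the identity $h(L(\calA)) = \log \sprad(M)$ is simply false for nondeterministic automata: with $k=2$, the two-state closed automaton with all eight transitions present (every state initial and accepting) recognizes $\{0,1\}^\omega$, whose prefix language has entropy $\log 2$, while $\sprad(M) = 4$. The number of length-$m$ accepted prefixes is not the number of length-$m$ paths when several paths spell the same word, so your Perron--Frobenius bounds on the prefix count break for the same reason; the spectral-radius-equals-entropy fact in Section \ref{entropy} is stated only for \emph{deterministic} trim automata.

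The missing idea is a determinization step, and it is not free for B\"uchi automata, since $\omega$-regular languages in general have no deterministic B\"uchi recognizer. What rescues the argument is that closed automata are weak, and weak automata do admit equivalent deterministic B\"uchi automata (the paper invokes Pin--Perrin for exactly this before asserting the open set condition); once $\calA$ is replaced by a deterministic closed automaton for the same language, your argument goes through. Two smaller points the paper also handles explicitly and you elide: the Mauldin--Williams equation $\sprad(A(s))=1$ is their theorem for \emph{strongly connected} constructions, so for a general automaton one must take the maximum over strongly connected components and then observe that $\sprad(M)$ is the maximum of the spectral radii of the diagonal blocks; and their framework uses digraphs rather than the multigraphs that automata naturally produce, which the paper repairs with an auxiliary state-splitting construction. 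Those are bookkeeping; the determinization is the substantive omission.
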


To state our main theorem, we will briefly describe the ``cycle language'' associated to a state $q$ in an automaton $\calA$, the definition of which is stated with more detail in section \ref{dimensions}.
Suppose that $\calA$ is an automaton (finite or B\"uchi) with $Q$ as its set of states.
For state $q \in Q$, the cycle language $C_q(\calA)$ is the language consisting of all $w \in \Sigma^*$ such that there is a run of $\calA$ from state $q$ to itself via $w$.
\begin{thmB}
    Let $\mathcal{A}$ be a trim B\"uchi automaton with set of states $Q$, and let $X$ be the set of elements in $[0,1]^d \subseteq \R^d$ that have a base-$k$ representation that $\calA$ accepts. Let $F$ be the set of accept states of $\mathcal{A}$. Then:
	
	\begin{enumerate}[(i)]
		\item $d_H(X) = \frac{1}{\log k} \max_{q \in F} h(C_q(\mathcal{A}))$;
		\item $d_B(X) = \frac{1}{\log k} \max_{q \in Q} h(C_q(\mathcal{A}))$.
	\end{enumerate}
\end{thmB}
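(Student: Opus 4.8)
The plan is to prove both parts simultaneously, reducing the upper bounds to Theorem A applied to the closed sub-automata coming from the strongly connected components (SCCs) of $\calA$, and the matching lower bounds to a direct cylinder count together with a Moran-type self-similar construction. Write $S(q)$ for the SCC of a state $q$, and for $q\in Q$ let $h_q$ denote the common value — by the comparison results of section \ref{entropy} — of the entropy of $C_q(\calA)$, of the language of all finite words labelling walks of $\calA$ that stay inside $S(q)$, and of the language recognised by the closed automaton $\calA_q$ obtained from $S(q)$ by making every state accepting and $q$ the unique start state; these agree because the three languages differ only by pre- and post-composition with words of length at most $|Q|$. I will freely use that, since $\calA$ is trim, every finite word labelling a walk of $\calA$ from a start state is a prefix of some $\omega$-word accepted by $\calA$, so that the mesh-cube characterisation of box-counting dimension from section \ref{prelim} gives $d_B(X)=\limsup_m\tfrac{1}{m\log k}\log|P_m|$ (with $\liminf$ for lower box dimension), where $P_m$ is the set of length-$m$ words labelling a walk of $\calA$ from a start state.

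\emph{Upper bounds.} For (ii), decompose a walk from a start state according to the sequence of SCCs it traverses: it is a concatenation of at most $|Q|$ maximal within-SCC walks joined by at most $|Q|-1$ crossing edges, and there are only finitely many such ``shapes''. Hence $|P_m|\le C\sum_{m_1+\dots+m_r\le m}\prod_i w^{S_i}_{m_i}$, where $w^{S}_{\ell}$ is the number of distinct length-$\ell$ words labelling walks inside $S$. Appending a fixed shortest return path embeds the set of length-$\ell$ walk-words inside $S$ into the set of closed walk-words at a fixed vertex of $S$ of length $\ell+O(1)$; since $\ell\mapsto|C_q(\calA)\cap\Sigma^\ell|$ is supermultiplicative (concatenation of closed walks at $q$ being injective on pairs), Fekete's lemma gives $|C_q(\calA)\cap\Sigma^\ell|\le e^{h_q\ell}$ for all $\ell$, so $w^{S}_{\ell}\le C'\,\ell\,e^{h_S\ell}$. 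Multiplying out and summing over the polynomially many compositions yields $|P_m|\le C''m^{2|Q|}e^{(\max_q h_q)m}$, whence $d_B(X)\le\tfrac{1}{\log k}\max_{q\in Q}h_q$. For (i), write $X=\bigcup_{q\in F}X_q$, where $X_q$ is the set of points of $[0,1]^d$ having a base-$k$ representation admitting an accepting run that visits $q$ infinitely often; being a finite union, $d_H(X)=\max_{q\in F}d_H(X_q)$. The crucial point is that such a run is eventually confined to $S(q)$: every excursion from $q$ that returns to $q$ stays inside $S(q)$, so from the first visit to $q$ onward the run never leaves $S(q)$. Therefore $X_q\subseteq\bigcup_u f_u(\overline{Z_q})$, a countable union over the finite words $u$ labelling a walk from a start state to $q$, where $f_u$ is the affine similarity carrying $[0,1]^d$ onto the corresponding $k^{-|u|}$-mesh cell and $\overline{Z_q}$ is the compact set of reals with a representation labelling an infinite walk inside $S(q)$ from $q$. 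Since $\overline{Z_q}$ is exactly the closed $k$-automatic set recognised by $\calA_q$, Theorem A gives $d_H(\overline{Z_q})=d_B(\overline{Z_q})=\tfrac{1}{\log k}h_q$; countable stability and bi-Lipschitz invariance of Hausdorff dimension then give $d_H(X_q)\le\tfrac{1}{\log k}h_q$, hence $d_H(X)\le\tfrac{1}{\log k}\max_{q\in F}h_q$.

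\emph{Lower bounds.} For (ii), fix $q\in Q$ and, by trimness, a word $u$ of length $a$ labelling a walk from a start state to $q$. For each $\ell$ and each $w\in C_q(\calA)\cap\Sigma^\ell$ the word $uw$ labels a walk from a start state to $q$, hence lies in $P_{a+\ell}$, and distinct $w$ give distinct such words; thus $|P_{a+\ell}|\ge|C_q(\calA)\cap\Sigma^\ell|$ and therefore $d_B(X)\ge\tfrac{1}{\log k}h_q$. For (i), fix $q\in F$; we may assume $q$ lies on a cycle, since otherwise $h_q=0$. By Fekete's lemma, for $N$ large in a suitable congruence class $m_N:=|C_q(\calA)\cap\Sigma^N|\ge 1$ and $\tfrac{1}{N}\log m_N\to h_q$. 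The maps $\{f_w:w\in C_q(\calA)\cap\Sigma^N\}$, where $f_w$ carries $[0,1]^d$ onto the $k^{-N}$-mesh cell indexed by $w$, form an iterated function system satisfying the open set condition with $(0,1)^d$ (distinct length-$N$ words index distinct mesh cells), so by Moran's theorem its attractor $K_N$ has $d_H(K_N)=\tfrac{\log m_N}{N\log k}$. Every coding of a point of $K_N$ is an infinite concatenation of words from $C_q(\calA)\cap\Sigma^N$, hence is a base-$k$ representation whose run, after the prefix $u$, sits at $q$ at every position in $a+N\Z_{\ge 0}$ and is thus accepting; so $f_u(K_N)\subseteq X$ and $d_H(X)\ge d_H(K_N)=\tfrac{\log m_N}{N\log k}\to\tfrac{1}{\log k}h_q$. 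Taking maxima over $q$ and combining with the upper bounds proves (i) and (ii); the two box-dimension estimates coincide, so $d_B(X)$ exists.

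\emph{Main obstacle.} The genuine difficulty is the Hausdorff upper bound in (i): one must see that an accepting run cannot ``profit'' from a high-entropy SCC unless that SCC itself contains an accept state — the confinement observation above — and then absorb the unbounded time the run may take to first reach such an accept state into a harmless countable union before invoking Theorem A. Everything else is bookkeeping: the SCC decomposition for the box-dimension upper bound, the supermultiplicativity estimates, and the open-set-condition verification for the self-similar lower bound.
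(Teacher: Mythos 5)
Your argument is correct in its essential steps but reaches the theorem by a genuinely different route, and the difference is worth recording. The paper's hardest ingredient is Lemma \ref{hausdorff_omega_limit}: to bound $d_H(\nu_k(C_q(\mathcal{A})^\omega))$ from below by the dimension of its closure, it inserts fixed return words $v_qu_q$ every $n$ characters into the prefix language, producing closed sets $X_n$ whose entropies converge from below to $h(M)$, and then quotes Theorem \ref{closed_equality}. You replace this entirely by a Moran-type construction: the IFS $\{f_w : w \in C_q(\mathcal{A}) \cap \Sigma^N\}$ satisfies the open set condition on $(0,1)^d$, its attractor embeds into $X$ after the prefix $u$ because $q \in F$ is revisited at every multiple of $N$, and Fekete's lemma applied to the supermultiplicative sequence $|C_q(\mathcal{A}) \cap \Sigma^N|$ drives $\frac{\log m_N}{N \log k}$ up to $\frac{1}{\log k} h(C_q(\mathcal{A}))$ along a congruence class. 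This is shorter and more self-contained, at the cost of invoking Moran's theorem (available in \cite{F03} and in any case subsumed by the Mauldin--Williams machinery the paper already uses). Your box-counting upper bound is also different: the paper passes to $\overline{\mathcal{A}}$ via Lemma \ref{automaton_closure} and applies Theorem \ref{closed_equality}, whereas you count prefixes directly by decomposing walks into finitely many SCC ``shapes'' and using $|C_q(\mathcal{A}) \cap \Sigma^\ell| \leq e^{h(C_q(\mathcal{A}))\ell}$ from Fekete; both work. Your Hausdorff upper bound (an accepting run is confined to the SCC of an infinitely-visited accept state from its first visit onward, so $X_q$ is a countable union of affine copies of the closed set $V_k(\mathcal{A}_q)$, to which Theorem A applies) is essentially the paper's Lemma \ref{hausdorff_dimension_cycle_languages}(i) combined with Lemma \ref{dimension_entropy_cycle} in different clothing.

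One loose end. Your lower bound for (ii) establishes only $|P_{a+\ell}| \geq |C_q(\mathcal{A}) \cap \Sigma^\ell|$, and since $h(C_q(\mathcal{A}))$ is a limit superior this controls only $d_{\overline{B}}(X)$; you have not bounded $d_{\underline{B}}(X)$ from below, so the closing claim that the two box-dimension estimates coincide and hence $d_B(X)$ exists does not follow as written. The fix is already in your toolkit: either observe that $P_m = (L^{pre})_m$ for the prefix-closed regular language $L^{pre}$ (trimness guarantees every finite walk word from a start state extends to an accepted $\omega$-word), so that $\frac{1}{m}\log|P_m|$ converges by Lemma \ref{language_entropy_prefix_limit}; or run your Moran construction at an arbitrary $q \in Q$ inside the closed automaton $\overline{\mathcal{A}}$ to get $d_{\underline{B}}(X) = d_{\underline{B}}(\overline{X}) \geq d_H(\overline{X}) \geq \frac{1}{\log k}\max_{q \in Q} h(C_q(\mathcal{A}))$.
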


From this theorem, and using the crucial notion of ``unambiguous'' B\"uchi automata, we establish a similar result that describes the Hausdorff measure of a $k$-automatic set in terms of the structure of the automaton that recognizes it.
For a definition of unambiguous automata and details about the partition $\{M_q:q\in Q'\}$ of the language $L$ in the theorem below, see Section \ref{measure}.

\begin{thmC}
Let $\mathcal{A}$ be an unambiguous B\"uchi automaton with set of states $Q$ and recognizing an $\omega$-language $L$.
Let $Q' \subseteq Q$ be the set of states whose strongly connected component contains an accept state. 
For each $q \in Q'$, let $\mathcal{A}_q$ be the automaton created by moving the start state of $\mathcal{A}$ to $q$ and removing all transitions out of its strongly connected component, and let $L_q$ be the $\omega$-language it accepts. 
Then we can effectively partition $L$ into sublanguages $\{M_q:q\in Q'\}$ such that:

	\begin{enumerate}[(i)]
		\item $d_H(\nu_k(L)) = \max_{q \in Q'} d_H(\nu_k(L_q))$,
		\item with $\alpha = d_H(\nu_k(L))$, $\mu_H^{\alpha}(L) = \sum_{q \in Q'} \mu_H^{\alpha}(M_q)$.
	\end{enumerate}
\end{thmC}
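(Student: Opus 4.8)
The plan is to route everything through the structure of accepting runs of $\mathcal{A}$: first extract a coarse partition of $L$ indexed by the state at which an accepting run settles into its terminal strongly connected component, read off (i) from countable stability of Hausdorff dimension, and then refine that partition to destroy the overlaps that $\nu_k$ creates between the images of the pieces, so that (ii) drops out of additivity of Hausdorff measure.

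I begin with the run structure. Since $\mathcal{A}$ has finitely many states and the condensation of its transition graph is acyclic, every infinite run is eventually trapped in a single strongly connected component (SCC); an accepting run meets an accept state infinitely often, so the states of that terminal SCC all belong to $Q'$. For $w \in L$, unambiguity provides the unique accepting run $\rho_w$; let $n_w$ be least with $\rho_w$ never again leaving its terminal SCC after time $n_w$, set $q_w := \rho_w(n_w) \in Q'$, and write $w = u_w v_w$ with $|u_w| = n_w$. Then $v_w \in L_{q_w}$, while $u_w$ belongs to the effectively regular set $P_q$ of finite words whose run from the start reaches $q$ without touching the SCC of $q$ before the last step. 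Conversely $P_q \cdot L_q \subseteq L$ and every $uv \in P_q L_q$ has $q_{uv} = q$, so $M_q^0 := \{w \in L : q_w = q\} = P_q L_q$, and $\{M_q^0 : q \in Q'\}$ partitions $L$. Prepending a finite digit block $u$ of length $\ell$ acts on values by an affine similarity $\phi_u$ of $[0,1]^d$ of ratio $k^{-\ell}$, whence $\nu_k(M_q^0) = \bigcup_{u \in P_q} \phi_u(\nu_k(L_q))$ is a countable union of similar copies of $\nu_k(L_q)$; by similarity-invariance and countable stability of $d_H$, $d_H(\nu_k(M_q^0)) = d_H(\nu_k(L_q))$ as soon as $P_q \neq \emptyset$. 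Passing to a trim $\mathcal{A}$ (which preserves $L$), every element of $Q'$ lies in an SCC reachable through one of its entry states, and strong connectedness forces $d_H(\nu_k(L_q))$ to be constant along each SCC, so the finite union $\nu_k(L) = \bigcup_{q \in Q'} \nu_k(M_q^0)$ gives $d_H(\nu_k(L)) = \max_{q\in Q'} d_H(\nu_k(M_q^0)) = \max_{q \in Q'} d_H(\nu_k(L_q))$. This is (i).

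For (ii) the raw partition is inadequate, because two distinct accepted words may name the same point of $[0,1]^d$ --- a $k$-adic ambiguity in one or more coordinates --- and they can lie in different $M_q^0$, so the images $\nu_k(M_q^0)$ can overlap on a set of positive $\mu_H^{\alpha}$-measure. The remedy is to break each such tie once: fix a linear order on $Q'$, and for $w \in L$ let $q^{\ast}(w)$ be the least value of $q_{w'}$ as $w'$ ranges over the words of $L$ with $\nu_k(w') = \nu_k(w)$; then put $M_q := \{w \in L : q^{\ast}(w) = q\}$. The $M_q$ still partition $L$, and $\nu_k(M_q) = \{\nu_k(w) : w \in L,\ q^{\ast}(w) = q\}$; since $q^{\ast}$ is constant on $\nu_k$-fibres, these images are pairwise disjoint with union $\nu_k(L)$. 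The relation $\{(w,w') : \nu_k(w) = \nu_k(w')\}$ is recognized by a finite automaton reading the two digit streams in parallel --- coordinatewise the two streams either agree forever or part through a single carry and remain complementary thereafter --- so conjoining it with the regular data $q_w = q$ and closing under projection and Boolean operations shows each $M_q$ is $\omega$-regular and effectively computable from $\mathcal{A}$. Finally $\nu_k$ is at most $2^d$-to-one (each coordinate contributes a binary ambiguity), so the continuous image $\nu_k(M_q)$ is Borel; as $\mu_H^{\alpha}$ is a Borel regular measure, hence finitely additive on disjoint Borel sets, $\mu_H^{\alpha}(\nu_k(L)) = \sum_{q \in Q'} \mu_H^{\alpha}(\nu_k(M_q))$, which is (ii) under the reading $\mu_H^{\alpha}(M_q) = \mu_H^{\alpha}(\nu_k(M_q))$.

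The step I expect to be the real obstacle is the ambiguity bookkeeping: one must (a) construct, and prove correct, the finite automaton recognizing ``two $\omega$-words denote the same point of $[0,1]^d$'', which is routine but needs care with the interleaving of the $d$ coordinate streams and with the exceptional endpoints $0$ and $1$; and (b) check that after the tie-break the languages $M_q$ are still obtained from $\mathcal{A}$ by effective closure operations, so that the partition is genuinely algorithmic. Everything downstream --- acyclicity of the condensation, constancy of $d_H(\nu_k(L_q))$ along an SCC, and additivity of Hausdorff measure on disjoint Borel sets --- is then standard.
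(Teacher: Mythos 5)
Your decomposition is the same one the paper uses: your $q_w$, $u_w$, and $M_q^0=P_qL_q$ are exactly the paper's \emph{key state}, \emph{key prefix}, and $M_q=\bigcup_{u\in P_q}uL_q$ (Lemma \ref{hausdorff_measure_one_key_state} and Theorem \ref{hausdorff_measure_all_key_states}), and your proof of (i) via affine invariance, countable stability of $d_H$, and SCC-invariance of $d_H(\nu_k(L_q))$ (Lemma \ref{strongly_connected_start_state}) matches the paper's, with the bonus that you explicitly dispose of the states $q\in Q'$ with $M_q=\varnothing$, a case the paper's appeal to the lemma quietly excludes. The one genuine divergence is in (ii): the paper applies additivity of $\mu_H^\alpha$ directly to the images $\nu_k(M_q)$ (and, inside the lemma, to the $\nu_k(M_{q,u})$) without checking disjointness, whereas you refine the partition by a tie-break that is constant on $\nu_k$-fibres so that the images become genuinely disjoint Borel sets. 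Your worry is legitimate and your fix is sound: the overlaps live in the set of points with two $k$-representations, which is countable (hence $\mu_H^\alpha$-null for $\alpha>0$) when $d=1$, but in dimension $d\ge 2$ that set has dimension $d-1$ and could carry positive $\mu_H^\alpha$-mass when $\alpha\le d-1$; your tie-broken partition, being an effective Boolean/projective combination of $\omega$-regular relations, satisfies the theorem as stated. The trade-off is that the paper's unrefined $M_q$ come with the explicit formula $\mu_H^\alpha(M_q)=\sum_{u\in P_q}\mu_H^\alpha(L_q)k^{-\alpha|u|}$, which is what makes the measure computable and is used later (e.g.\ in Theorem D); your refined $M_q$ lose that formula, so if you want the quantitative version you would still need to argue that the discarded words contribute a $\mu_H^\alpha$-null set, which is exactly the point the paper leaves implicit.
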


Finally, we give a dividing line for the fractal dimensions of definable sets in certain first-order sets related to B\"uchi automata.
This dividing line has implications for the model-theoretic tameness of structures of the form $(\R,<,+,X)$ where $X \subseteq [0,1]^n$ is $k$-automatic, since Hieronymi and Walsberg have shown in \cite{HW19} that if $\calC$ is a Cantor set (compact, has no interior and no isolated points) then $(\R,<,+,\calC)$ is not tame with respect to any notion coming from Shelah-style generalizations of stability, including NIP and NTP$_2$.

\begin{thmD}
Suppose $X \subseteq [0,1]^n$ is $k$-automatic.  There exists a set $A \subseteq [0,1]$ definable in $(\R, <,+, X)$ such that $d_B(A) \neq d_H(A)$ if and only if either a unary Cantor set is definable in $(\R,<,+,X)$ or a set that is both dense and codense on an interval is definable in $(\R,<,+,X)$.
\end{thmD}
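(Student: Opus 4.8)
The plan is to establish the two implications separately, relying throughout on the fact that, since $X$ is $k$-automatic and the $k$-automatic subsets of the Euclidean spaces $\R^m$ form a class closed under Boolean operations, finite products, and coordinate projections, \emph{every} set definable in $(\R,<,+,X)$ is again $k$-automatic. In particular Theorems~A and~B apply to all definable sets, and the maps $A\mapsto\overline A$ (topological closure) and $A\mapsto A'$ (Cantor--Bendixson derivative) are given by first-order formulas over $(\R,<,+)$ and preserve $k$-automaticity.

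For ``$\Leftarrow$'' I would first handle the case in which a unary Cantor set $C$ is definable. Take $A$ to be the set of endpoints of the complementary intervals of $C$, which is definable from $C$. Then $A$ is countable, so $d_H(A)=0$; since $C$ is nowhere dense, $A$ is dense in $C$, so $\overline A=C$ and $d_B(A)=d_B(C)$; and $d_B(C)>0$ because, $C$ being closed and $k$-automatic, Theorem~A gives $d_B(C)=\tfrac{1}{\log k}\,h(L(\calA))$ for a closed automaton $\calA$ recognizing $C$, and if this entropy were $0$ then every strongly connected component of $\calA$ would be a simple cycle, forcing the $\omega$-language it recognizes, and hence $C$, to be countable---impossible for a Cantor set. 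Thus $A$ is the required set. For the case in which a set $D$ that is dense and codense on a bounded interval $I$ is definable (after intersecting with $I$ we may assume $D\subseteq I$): if $D$, or symmetrically $I\setminus D$, is countable, then that set has Hausdorff dimension $0$ while its closure contains $I$ and hence its box-counting dimension is $1$, so it works directly; otherwise I would use that a definable set dense and codense on an interval forces $(\R,<,+,X)$ to interpret the monadic second-order theory of one successor (following Hieronymi and Walsberg), whence a Cantor set is definable and we reduce to the previous case.

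For ``$\Rightarrow$'' I would argue by contraposition: assume that neither a unary Cantor set nor a set dense and codense on an interval is definable, fix a definable $A\subseteq[0,1]$, and prove $d_B(A)\le d_H(A)$. If $A$ contains a nondegenerate interval, both dimensions equal $1$. Otherwise $\overline A$ contains no interval either, since an interval $J\subseteq\overline A$ would make $A\cap J$ dense in $J$ and, as $A$ contains no interval, codense in $J$, contradicting the hypothesis; so $\overline A$ is closed, nowhere dense and $k$-automatic. Using that $k$-automatic closed sets have finite Cantor--Bendixson rank, the perfect kernel $P=\overline A^{(N)}$ of $\overline A$ (for a suitable $N\in\N$) is definable; if $P\neq\varnothing$ then $P$ is perfect, compact and nowhere dense, hence a Cantor set, contradicting the hypothesis, so $P=\varnothing$ and $\overline A$ is countable. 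Then $d_H(\overline A)=0$, and Theorem~A applied to the closed $k$-automatic set $\overline A$ gives $d_B(A)=d_B(\overline A)=d_H(\overline A)=0$, so $d_H(A)=d_B(A)$. (One could instead read $d_H(A)<d_B(A)$ off Theorem~B as the presence of a maximal-entropy strongly connected component avoiding the accept states, but converting the associated subautomaton into a set definable over $(\R,<,+,X)$ is precisely the difficulty noted next, so the argument above proceeds topologically.)

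I expect the principal obstacle to be the uncountable dense-and-codense subcase of ``$\Leftarrow$''. There the natural construction---restrict the recognizing automaton to a maximal-entropy strongly connected component and take the resulting closed $k$-automatic set of the right dimension---is not visibly first-order over $(\R,<,+,X)$, so manufacturing a \emph{definable} Cantor set (equivalently, a definable uncountable closed nowhere dense set) seems to require the model-theoretic detour through the monadic second-order theory of one successor. Pinning down that this interpretation genuinely yields a definable Cantor set, together with confirming the (essentially standard) facts that definable sets in $(\R,<,+,X)$ are $k$-automatic, that closure and Cantor--Bendixson derivative are definable, and that $k$-automatic closed sets have finite Cantor--Bendixson rank, is what remains to be carried out.
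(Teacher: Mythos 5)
The decisive gap is in the dense-and-codense half of ($\Leftarrow$). Your countable/uncountable split defers the entire uncountable case to the asserted chain ``dense and codense definable $\Rightarrow$ $(\R,<,+,X)$ interprets the monadic second-order theory of one successor $\Rightarrow$ a Cantor set is definable,'' which you rightly flag as the principal obstacle but which you cannot quote: the Hieronymi--Walsberg result runs in the opposite direction (a definable Cantor set yields an interpretation of MSO), and the implication from a dense codense definable set to a definable Cantor set is false for general expansions of $(\R,<,+)$ (consider $(\R,<,+,\Q)$); establishing anything like it in the $k$-automatic setting would be a theorem of comparable difficulty to the one being proved. The paper closes this case with no such detour. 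First it observes that if $d_H(D\cap I)<1$ or $d_H\bigl((\overline{D}\setminus D)\cap I\bigr)<1$ for some subinterval $I$, then density of the relevant definable set in $I$ gives box-counting dimension $1$ and one is done --- this already subsumes your countable subcase and much more. In the remaining case, where both $D$ and $\overline{D}\setminus D$ have Hausdorff dimension $1$ on every $k$-rational subinterval, the automaton structure (Lemma \ref{loglem}, Lemma \ref{measuremin}, together with the Hausdorff-measure results of Section \ref{measure}) yields fixed constants $\epsilon,\epsilon'>0$ such that $\mu_H^1(D\cap I)\geq\epsilon\cdot\operatorname{diam}(I)$ and $\mu_H^1((\overline{D}\setminus D)\cap I)\geq\epsilon'\cdot\operatorname{diam}(I)$ for all such $I$; the Lebesgue density theorem then forces $\epsilon=\epsilon'=1$, contradicting $\epsilon'\leq 1-\epsilon$. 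This measure-theoretic contradiction is the missing idea, and nothing in your proposal substitutes for it.

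Your other two pieces are in better shape but not complete. The Cantor-set case of ($\Leftarrow$) is essentially the paper's argument (endpoints of complementary intervals), with a reasonable entropy-based justification that $d_B(C)>0$ in place of the paper's citation. Your ($\Rightarrow$) direction is a genuinely different route: where the paper bounds $d_H(\overline{A})$ away from $0$ and $1$ via Lemma \ref{loglem} and then invokes a result of \cite{BG21} to extract a definable Cantor set, you run a Cantor--Bendixson analysis on $\overline{A}$ and conclude countability when no Cantor set is definable. That is an attractive, more topological argument, and the final step via Theorem \ref{closed_equality} (countable closed $k$-automatic implies $d_B=0$) is correctly identified as essential. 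But it hinges on the unproved claim that closed $k$-automatic subsets of $[0,1]$ have finite Cantor--Bendixson rank, so that the perfect kernel is a finite composition of definable derivative operations and hence definable; this is plausible from the strongly-connected-component structure of a closed automaton, but it is a nontrivial lemma you would need to supply. As written, the proposal is therefore not a proof: the uncountable dense-codense case has no valid argument, and the forward direction rests on an unverified structural fact.
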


\subsection{Background}
In his seminal work \cite{B62}, J. R. B\"{u}chi introduced the notion of what we now call a B\"{u}chi automaton, and he identified a connection between these automata and the monadic second-order theory of the natural numbers with the successor function.
Notably, B\"{u}chi automata take countably infinite-length inputs, unlike standard automata (which we will also call ``finite automata''), which only accept or reject finite-length input strings.
In addition to the work of B\"{u}chi to extend the notion of automatic sets to infinite words, in \cite{M66} McNaughton broadened the realm of generating infinite sequences by a finite automaton, and many more notions of automatic or regular sets of infinite words arose.

There is natural topological structure on the space of infinite words on a finite alphabet, 
hence the topological features of subsets of such a space that is recognized by an appropriate B\"{u}chi automaton have been investigated since the 1980s.
Languages recognized by B\"{u}chi automata are commonly called regular $\omega$-languages.
One topological property that was first introduced in the context of information theory by Claude Shannon in \cite{S48} is that of entropy, also called ``topological entropy'' in some settings.

In \cite{S85}, L. Staiger established that extending the definition of entropy to $\omega$-languages yields compelling topological characterizations of closed regular $\omega$-languages.
For example, he shows in \cite{S85} that a closed regular $\omega$-language is countable if and only if the entropy is $0$ and that the entropy of regular $\omega$-languages is countably additive.
From another perspective, it was shown in \cite{CLR15} that there is a close connection between regular $\omega$-languages and Graph Directed Iterated Function Systems, or GDIFSs for brevity.
Due to the work in \cite{MW88} there have long existed means of computing geometric properties like Hausdorff measure and Hausdorff dimension for GDIFSs.
In light of the connection between B\"{u}chi automata and GDIFSs, the connections between fractal dimensions and entropy for automatic sets of real numbers can now yield a total characterization, as our paper illuminates.

\begin{ack}
    Many thanks to Philipp Hieronymi for the interesting ideas, questions, and generous guidance provided for this paper.
    Thanks to Elliot Kaplan, Jason Bell, and Rahim Moosa, for their helpful questions and comments regarding the contents of this paper.
    We gratefully acknowledge that this research was supported by the Fields Institute for Research in Mathematical Sciences. Its contents are solely the responsibility of the authors and do not necessarily represent the official views of the Institute.
    The first author was partially supported by the National Science Foundation Graduate Research Fellowship Program under Grant No. DGE -- 1746047. The second author was partially supported by National Science Foundation Grant no. DMS -- 1654725.
\end{ack}

\section{Preliminaries}\label{prelim}

\subsection{Definition of B\"uchi automata}

Below, for a set $X$ we use $X^*$ to denote the Kleene star of $X$, i.e. $X^*=\{x_1x_2\ldots x_n: n\in \N, x_1,\ldots ,x_n\in X\}$, and we similarly use $X^{\omega}$ to denote the set $\{x_1x_2\ldots: x_1, x_2, \ldots \in X\}$. 
For a language $L$ of finite strings, we will use $\vec{L}$ to denote the limit language of $L$, i.e. the set of infinite strings with infinitely many prefixes in $L$.

\begin{defn}
    A \textit{finite automaton} is a $5$-tuple $\mathcal{A} = (Q, \Sigma, \delta, S, F)$ where:
    
    \begin{itemize}
        \item $Q$, the set of \textit{states,} is a finite set;
        \item $\Sigma$, the \textit{alphabet,} is a finite set;
        \item $\delta$, the \textit{transition function,} is a function $Q \times \Sigma \to \mathcal{P}(Q)$;
        \item $S$, the set of \textit{start states} or \textit{initial states,} is a nonempty subset of $Q$;
        \item $F$, the set of \textit{accept states} or \textit{final states,} is a subset of $Q$.
    \end{itemize}
    
    A finite automaton is said to \textit{run from $q_0$ to $q_n$ on} a string $w = w_1 \dots w_n \in \Sigma^n$, for $q_0, q_n \in Q$, 
    if there exist states $q_1, \dots, q_{n-1}$ such that for $i = 1, \dots, n$ we have $q_i \in \delta(q_{i-1}, w_i)$. 
    If $q_0 \in S$, 
    such a sequence of states may be called a \textit{run} of $w$ in $\mathcal{A}$, which is \textit{accepting} if $q_n \in F$. 
    The automaton \textit{accepts} $w$ if there is an accepting run of $w$. 
    The language \textit{recognized} (or accepted) by $\mathcal{A}$ is the set of all strings in $\Sigma^*$ it accepts. 
    Two finite automata are \textit{equivalent} if they recognize the same language.
\end{defn}

\begin{defn}
    A \textit{B\"uchi automaton} is a $5$-tuple $\mathcal{A} = (Q, \Sigma, \delta, S, F)$ where:
    
    \begin{itemize}
        \item $Q$, the set of \textit{states,} is a finite set;
        \item $\Sigma$, the \textit{alphabet,} is a finite set;
        \item $\delta$, the \textit{transition function,} is a function $Q \times \Sigma \to \mathcal{P}(Q)$;
        \item $S$, the set of \textit{start states} or \textit{initial states,} is a nonempty subset of $Q$;
        \item $F$, the set of \textit{accept states} or \textit{final states,} is a subset of $Q$.
    \end{itemize}
    
    For an infinite string $w = w_1 w_2 \dots \in \Sigma^\omega$, a \textit{run} of $w$ in $\mathcal{A}$ is a sequence of states $q_0, q_1, \dots \in Q^\omega$ such that $q_0 \in S$ and for $i \in \mathbb{Z}^+$ we have $q_i \in \delta(q_{i-1}, w_i)$. 
    A run is \textit{accepting} if $q_i \in F$ for infinitely many $i$. 
    The automaton \textit{accepts} $w$ if there is an accepting run of $w$. 
    The $\omega$-language \textit{recognized} (or accepted) by $\mathcal{A}$ is the set of all strings in $\Sigma^\omega$ it accepts. 
    Two B\"uchi automata are \textit{equivalent} if they recognize the same language.
\end{defn}

Note that the only difference between these definitions is in the accept condition; thus, the same tuple $(Q, \Sigma, \delta, S, F)$ may be alternately treated as either a finite or B\"uchi automaton, which will be useful several times in this paper. 
A finite or B\"uchi automaton also has a canonical digraph structure whose vertex set is $Q$ and whose edge set contains precisely those $(q, q') \in Q^2$ for which there exists $\sigma \in \Sigma$ such that $q' \in \delta(q, \sigma)$. 
We will often implicitly refer to this digraph structure, speaking of such concepts as paths between states and strongly connected components containing states.
If we refer to the graphical structure on an automaton as simply a graph, we implicitly mean the structure of the automaton as a directed graph.

We will also use several properties that such an automaton may have:

\begin{defn} \label{autoproperties}
Let $\mathcal{A} = (Q, \Sigma, \delta, S, F)$ be a finite or B\"{u}chi automaton.
    \begin{enumerate}[(i)]
        \item We say $\calA$ is \textit{deterministic} if $|S| = 1$ and $|\delta(q, c)| \leq 1$ for all $q \in Q, c \in \Sigma$. (Note that this definition guarantees that there is at most one run of a given $w$ in $\mathcal{A}$.) Every finite automaton has an equivalent deterministic automaton; this is not true in general for B\"uchi automata.
        \item We say $\calA$ is \textit{finite-trim} if for every $q \in Q$, there is a path from a start state to $q$ (possibly of zero length) and a path from $q$ to an accept state (also possibly of zero length). On the additional condition that the path from $q$ to an accept state must be of nonzero length, we say that $\calA$ is \textit{trim}. Every B\"uchi automaton has an equivalent trim automaton; every finite automaton has an equivalent finite-trim automaton. In fact, given an automaton $\calA$, we may always produce a finite-trim automaton that is equivalent to $\calA$ as both a finite and B\"uchi automaton.
        \item We say $\calA$ is \textit{closed} if it is trim and every state is an accept state (i.e., $Q=F$).
        \item Given a trim automaton $\calA=(Q,\Sigma,\delta,S,F)$, call $\overline{\calA}=(Q,\Sigma,\delta,S,Q)$ (this is the resulting automaton when all the states of $\calA$ are added to the set of accept states) the \textit{closure} of $\calA$.
        Note that if an automaton $\calB=(Q',\Sigma, \delta',S',F')$ is equivalent to $\calA$ but not trim, then $(Q',\Sigma,\delta',S',Q')$ need not recognize the same language as $\overline{\calA}$.
        \item We say an automaton $\calA=(Q,\Sigma,\delta,S,F)$ is \textit{weak} if for every $q,q' \in Q$ such that $q$ and $q'$ are in the same strongly connected component of $\calA$ (as a digraph), either $q$ and $q'$ are both accept states, or both are not accept states.
    \end{enumerate}
\end{defn}

\subsection{Regularity and $k$-representations}
Let $k \in \N_{>1}$, and set $[k]=\{0,1, \ldots ,k-1\}$ for the remainder of this paper.
We will use the terms ``base-$k$ representation'' and ``$k$-representation'' interchangeably to mean the expression of an element $x \in \R$ as a countable sum of integer powers of $k$, each multiplied by a coefficient in  $[k]$.
Note that we will sometimes conflate elements of $[0,1]$ and their $k$-representations, and we may occasionally say that an automaton $\calA$ accepts \emph{the} $k$-representation of $x \in [0,1]$.
For the countable subset of $[0,1]^d$ whose elements have multiple (in particular, at most $2^d$) $k$-representations, we mean that $\calA$ accepts \emph{at least one} of the $k$-representations of $x$.
For ease of switching between $x$ and its $k$-representation, we will define a valuation for elements of $[k]^{\omega}$.

\begin{defn}
Define $\nu_k:[k]^{\omega} \to [0,1]$ by:
$$\nu_k(w) = \sum_{i=0}^{\infty} \frac{w_i}{k^{i+1}}$$
where $w=w_0w_1w_2\ldots$ with $w_i \in [k]$ for each $i \in \N$.
\end{defn}

Note that the equivalence relation $v \equiv w \iff \nu_k(v) = \nu_k(w)$ is not only a finite equivalence relation, but moreover each equivalence class has size at most two.
As noted above, only countably many elements in $[k]^{\omega}$ are not the unique element of their $\nu_k$-equivalence class.
For $L \subseteq ([k]^d)^{\omega}$, set
$$\nu_k(L)=\{ (\nu_k(w_1), \ldots \nu_k(w_d)): w_1, \ldots,w_d \in [k]^{\omega}, ((w_{1,i},\ldots ,w_{d,i}))_{i<\omega} \in L\}.$$

We can now formally define what it means for a subset of $[0,1]\subseteq \R$ to be $k$-automatic.
Let $k \in \N$ be greater than one, and let $d \in \N$ be greater than zero.

\begin{defn} \label{r-reg}
Say that $L\subseteq ([k]^d)^{\omega}$ is \emph{$k$-regular} if there is some B\"uchi automaton $\calA$ with alphabet $[k]^d$ that recognizes $L$.
Say that $A \subseteq [0,1]^d$ is \emph{$k$-automatic} if there is a B\"uchi automaton $\calA$ with alphabet $[k]^d$ that recognizes the maximal language $L \subseteq ([k]^d)^{\omega}$ such that $A=\nu_k(L)$.
Moreover, if this holds, say that $\calA$ \emph{recognizes} $A$.
\end{defn}

We also use the notation that for a B\"uchi automaton $\calA$ with alphabet $[k]$, $V_k(\calA)$ will denote the set of elements $x\in [0,1]$ for which some $k$-representation of $x$ is accepted by $\calA$.

The fact below follows immediately from the existence of a B\"uchi automaton with alphabet $\Sigma^2$ that accepts a pair of elements $x,y \in \Sigma^{\omega}$ precisely if both are $k$-representations of the same element of $[0,1]$.

\begin{fact}
For $A \subseteq [0,1]^d$, if there is some $k$-regular language $L \subseteq ([k]^d)^{\omega}$ such that $A=\nu_k(L)$, then the set of all $k$-representations of elements of $A$ is $k$-regular as well.
\end{fact}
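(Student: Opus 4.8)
The plan is to produce the desired language by the standard closure properties of B\"uchi-recognizable $\omega$-languages — synchronized product (intersection) and letter-to-letter projection — once we have the small ``equality'' automaton that the sentence preceding the Fact alludes to. Concretely, writing $L' := \{w \in ([k]^d)^{\omega} : \nu_k(w) \in A\}$ for the set of all base-$k$ representations of elements of $A$, I would exhibit $L'$ as the projection onto the first coordinate of an explicit B\"uchi-recognizable language over $[k]^d \times [k]^d$.

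\emph{Step 1: the equality automaton.} First I would build, over the alphabet $[k] \times [k]$, a B\"uchi automaton $\calE_1$ accepting exactly those pairs $(a,b)$ with $a,b \in [k]^{\omega}$ and $\nu_k(a) = \nu_k(b)$. The underlying elementary fact is that if $\nu_k(a) = \nu_k(b)$ and $a \neq b$, and $i$ is least with $a_i \neq b_i$, then $\sum_{j > i} (a_j - b_j) k^{-(j+1)} = (b_i - a_i) k^{-(i+1)}$; the left side has absolute value at most $(k-1)\sum_{j>i} k^{-(j+1)} = k^{-(i+1)}$, while the right side has absolute value at least $k^{-(i+1)}$ since $a_i \neq b_i$ are integers, so equality is forced throughout, giving $|a_i - b_i| = 1$ and, for all $j > i$, one of $a_j, b_j$ equal to $0$ and the other to $k-1$. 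Hence $\{(a,b) : \nu_k(a) = \nu_k(b)\}$ is recognized by a fixed finite-state device (one state ``still equal,'' two states for the two post-carry branches, and a reject sink), which as a B\"uchi automaton is $\calE_1$. Taking the $d$-fold synchronized product of $\calE_1$ with itself yields a B\"uchi automaton $\calE_d$ over $[k]^d \times [k]^d$ that accepts a pair of $d$-tuple streams $(v,w)$ precisely when $\nu_k(v_j) = \nu_k(w_j)$ for every coordinate $j$, i.e.\ when $v$ and $w$ are base-$k$ names of the same point of $[0,1]^d$.

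\emph{Step 2: product with $\calA$, then project.} Let $\calA$ be a B\"uchi automaton over $[k]^d$ recognizing $L$, so $A = \nu_k(L)$. Form the synchronized product automaton $\calB$ over $[k]^d \times [k]^d$ whose runs on an input $(w,v)$ simultaneously simulate a run of $\calA$ on $v$ and a run of $\calE_d$ on $(w,v)$; by the usual intersection construction for B\"uchi automata (the auxiliary flag component interleaving the two B\"uchi acceptance conditions), $\calB$ accepts $(w,v)$ iff $v \in L$ and $\nu_k(w_j) = \nu_k(v_j)$ for all $j$. Since the projection of a B\"uchi-recognizable language under a letter-to-letter map is again B\"uchi-recognizable (the projected automaton nondeterministically guesses the erased second coordinate), the language $L' = \{w : \exists v\ (w,v) \in L(\calB)\}$ is $k$-regular. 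By construction $L' = \{w \in ([k]^d)^{\omega} : \nu_k(w) \in \nu_k(L)\}$ is exactly the set of all base-$k$ representations of elements of $A$, as desired; note also that $L'$ is the maximal language with $\nu_k(L') = A$, so this simultaneously witnesses that $A$ is $k$-automatic in the sense of Definition~\ref{r-reg}.

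\emph{Main obstacle.} The only genuine content is the number-theoretic characterization of $\nu_k$-equality in Step~1 and the verification that it is captured by a fixed finite automaton; once that is in place, Steps~2 is entirely routine closure theory for B\"uchi automata, which I would simply cite. The one point demanding a little care is the bookkeeping of the interleaved B\"uchi acceptance condition when intersecting $\calA$ with $\calE_d$ and then projecting, but this is standard and introduces no new difficulty.
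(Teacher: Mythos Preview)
Your proposal is correct and is exactly the argument the paper has in mind: the paper does not give a proof beyond the one-line remark preceding the Fact that it ``follows immediately from the existence of a B\"uchi automaton with alphabet $\Sigma^2$ that accepts a pair of elements $x,y \in \Sigma^{\omega}$ precisely if both are $k$-representations of the same element of $[0,1]$,'' and your Steps~1--2 are a careful unpacking of precisely that construction (equality automaton, synchronized product, projection).
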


Call an element $x \in [0,1]^d$ a \emph{$k$-rational} if there exists $w \in ([k]^d)^*$ such that $x=\nu_k(w\vec{0}^{\omega})$, where $\vec{0}$ is the $d$-tuple $(0, \dots, 0)$.
Clearly, these are the elements of $[0,1]^d$ whose coordinates can all be written as fractions with powers of $k$ in the denominators.

Throughout this paper $d$ denotes the (finite, but arbitrary) arity of the Euclidean space we are working in.
We use $\calA$ to denote both finite automata and B\"{u}chi automata, and we use $L$ to denote the subset of $([k]^d)^*$ that $\calA$ recognizes if it is a finite automaton, or to denote the subset of $([k]^d)^{\omega}$ that $\calA$ recognizes if it is a B\"{u}chi automaton.
If $\calA$ is a B\"{u}chi automaton, we will often use $A$ to denote $\nu_k(L)$, unless specified otherwise.
We will say that a B\"{u}chi automaton $\calA$ accepts $x \in [0,1]^d$ if $\calA$ accepts some $w \in ([k]^d)^{\omega}$ such that $\nu_k(w) = x$.

Given $\calA$ a trim B\"uchi automaton we let $\overline{A}$ denote the image under $\nu_k$ of the language that $\overline{\calA}$, the closure of $\calA$, recognizes.
In \cite{CLR15}, the authors show that every closed trim B\"uchi automaton recognizes a (topologically) closed set, hence the conflation of the set recognized by $\overline{\calA}$ with $\overline{A}$.
This conflation will be further justified in Section \ref{closed}.
In addition, we define closed $k$-automatic $\omega$-languages and the closure of a $k$-automatic $\omega$-language analogously.
If $L$ is a language (either a subset of $\Sigma^*$ or a subset of $\Sigma^{\omega}$) let $L^{pre} \subseteq \Sigma^*$ denote the set of all finite prefixes of elements of $L$.
Similarly, let $L_n^{pre}$ denote the set of all length-$n$ prefixes of elements of $L$, and let $L_{<n}^{pre}$ denote the set of all prefixes of $L$ with length at most $n$.

\subsection{Definition of entropy}

A key concept that turns out to be very helpful in the study of dimension of $k$-automatic sets is the notion of \textit{entropy.} 
The entropy of a formal language was perhaps first used for regular languages by Chomsky and Miller in  \cite{CM58} and was called entropy as an analogue for topological entropy by Hansen, Perrin, and Simon in \cite{HPS92}.
In \cite{AB11}, the authors note a seeming correspondence between the Hausdorff dimension of a $k$-automatic fractal and the entropy of the language of substrings of the base-$k$ expansions of its points.
Proving this conjecture is one of the main results of this paper. 
In order to do so, we find it most convenient to extend the definition of entropy to sequences of real numbers as follows:

\begin{defn}
    Let $(a_n)_{n\in\N}$ be a sequence of nonnegative real numbers with the property that infinitely many terms are nonzero. The \textit{entropy} of $a_n$ is defined as the limit superior:
    
    $$h((a_n)_n) = \limsup_{n \to \infty} \frac{\log a_n}{n}.$$
    
    The entropy $h(L)$ of an infinite language $L$ is the entropy of $(|L|_n)_n$.
\end{defn}

We choose to leave the entropy undefined for $a_n$ eventually zero, as this way the entropy is always a real number (and is nonnegative if $a_n$ is an integer sequence), which simplifies some results regarding entropy.

\subsection{Definition of box-counting dimension}

There are two different notions of dimension that will play large roles in this paper. 
The first is the concept of \textit{box-counting dimension,} also known as \textit{Minkowski dimension.} 
Intuitively, this is defined by quantifying how the number of boxes required to cover a given set increases as the size of the boxes decreases. 
This matches our intuition regarding ``nice'' sets that have a well-defined length, area, etc.
For instance, it is natural that to cover a polygonal area of $\R^2$ with boxes, when the boxes are half the size, this will require four times as many boxes. 
The box-counting dimension of such a polygon is $\frac{\log 4}{\log 2} = 2$.

In order to fully formalize this notion, many decisions must be made about the details. 
Is the ``size'' of a box its diameter or its side length? 
Must we use boxes, or could we use another shape, like a closed ball, instead? 
What if we allow the covering sets to be \textit{any} set of a given diameter? 
Should we place restrictions on the positioning of each box, such as requiring them to come from a grid? 
It turns out that most of these decisions have no effect on the resulting notion of dimension, i.e. they are equivalent.
Therefore, we use one of the several versions of the definition given in \cite{F03}:

\begin{defn}[\cite{F03}]
Let $X \subseteq \R^d$ be nonempty and bounded.
    \begin{enumerate}[(i)]
        \item We define $N(X, \epsilon)$ to be the number of sets of the form $I_{\vec{z}} = [z_1 \epsilon, (z_1 + 1) \epsilon] \times \dots \times [z_d \epsilon, (z_d + 1) \epsilon]$, where $\vec{z} = (z_1, \dots, z_d)$ are integers, required to cover $X$.
        \item The \textit{upper box-counting dimension} of $X$ is:
        
        $$d_{\overline{B}}(X) = \limsup_{\epsilon \to 0} \frac{\log N(X, \epsilon)}{\log \frac{1}{\epsilon}}.$$
        \item The \textit{lower box-counting dimension} of $X$ is:
        
        $$d_{\underline{B}}(X) = \liminf_{\epsilon \to 0} \frac{\log N(X, \epsilon)}{\log \frac{1}{\epsilon}}.$$
        \item If the upper and lower box-counting dimensions of $X$ are equal, we refer to their value as simply \textit{the box-counting dimension} $d_B(X)$.
    \end{enumerate}
\end{defn}

Box-counting dimension has several properties that justify its being called a dimension:

\begin{fact}[\cite{F03}]
\;
    \begin{enumerate}[(i)]
        \item If $X$ is a smooth $n$-manifold (embedded in $\R^d$), then $d_B(X) = n$.
        \item If $X \subseteq Y$, then $d_{\overline{B}}(X) \leq d_{\overline{B}}(Y)$ and $d_{\underline{B}}(X) \leq d_{\underline{B}}(Y)$.
        \item If $X = Y_1 \cup Y_2$, then $d_{\overline{B}}(X) = \max(d_{\overline{B}}(Y_1), d_{\overline{B}}(Y_2))$.
        \item Invertible affine transformations of $\R^d$ preserve $d_{\overline{B}}$ and $d_{\underline{B}}$.
    \end{enumerate}
\end{fact}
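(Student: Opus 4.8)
The plan is to verify each of the four listed properties in turn, in each case reducing to the definition of $N(X,\epsilon)$ as the number of grid boxes $I_{\vec z}$ of side $\epsilon$ needed to cover $X$, and then passing to the appropriate $\limsup$ or $\liminf$ as $\epsilon\to 0$. These are standard facts from \cite{F03}, so the proof is a matter of recording the elementary covering arguments rather than introducing new ideas.

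First I would handle (ii) and (iii), which are purely combinatorial. For (ii), if $X\subseteq Y$ then every grid cover of $Y$ is also a grid cover of $X$, so $N(X,\epsilon)\le N(Y,\epsilon)$ for every $\epsilon>0$; dividing by $\log(1/\epsilon)>0$ (for $\epsilon<1$) and taking $\limsup$ (resp.\ $\liminf$) gives $d_{\overline B}(X)\le d_{\overline B}(Y)$ and $d_{\underline B}(X)\le d_{\underline B}(Y)$. For (iii), the monotonicity already gives $d_{\overline B}(X)\ge\max(d_{\overline B}(Y_1),d_{\overline B}(Y_2))$; for the reverse, a grid cover of $Y_1$ together with one of $Y_2$ covers $X$, so $N(X,\epsilon)\le N(Y_1,\epsilon)+N(Y_2,\epsilon)\le 2\max(N(Y_1,\epsilon),N(Y_2,\epsilon))$, and since $\log(2\max(a,b))/\log(1/\epsilon)=\log\max(a,b)/\log(1/\epsilon)+o(1)$ as $\epsilon\to 0$, taking $\limsup$ yields $d_{\overline B}(X)\le\max(d_{\overline B}(Y_1),d_{\overline B}(Y_2))$. (Note (iii) is stated only for the upper dimension, which is what makes the argument work — the same bound fails for $\liminf$.)

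Next I would do (iv). It suffices to show that for an invertible affine map $T$ on $\R^d$ there are constants $0<c\le C$ (depending only on $T$) and a rescaling of $\epsilon$ under which grid covers transform into grid covers up to bounded multiplicity: concretely, a box of side $\epsilon$ has image contained in a ball of radius comparable to $\|T\|\epsilon$, which meets at most $O((\|T\|+1)^d)$ grid boxes of side $\epsilon$, so $N(TX,\epsilon)\le K\cdot N(X,\epsilon)$ for a constant $K$ independent of $\epsilon$; applying the same to $T^{-1}$ gives the reverse inequality. Since $\log K$ is absorbed into the $o(\log(1/\epsilon))$ error, both $d_{\overline B}$ and $d_{\underline B}$ are preserved. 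Finally, for (i), the case of a polygon/cube is the intuition already spelled out in the text; for a general smooth $n$-manifold $X\subseteq\R^d$ one covers $X$ by finitely many coordinate charts, in each of which $X$ is the graph of a $C^1$ function over a bounded region of $\R^n$, so that a Lipschitz estimate shows $N(X,\epsilon)=\Theta(\epsilon^{-n})$, giving $d_B(X)=n$; by (iii) (extended to finite unions by induction) the finitely many charts combine without changing the dimension.

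The main obstacle — such as it is — is part (i): unlike (ii)--(iv), which are immediate from the definition, (i) requires the observation that box-counting dimension is a bi-Lipschitz invariant (or at least behaves well under $C^1$ maps with bounded derivative), so that the local graph description of a manifold transfers the elementary cube computation to $X$. Everything else is bookkeeping with the $\limsup$/$\liminf$ and the fact that multiplicative constants contribute only $o(\log(1/\epsilon))$. Since all four statements are quoted from \cite{F03}, I would likely present (ii), (iii), and (iv) in full and refer to \cite{F03} for the details of (i).
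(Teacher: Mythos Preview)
Your proposal is correct and gives the standard elementary arguments for these properties. However, there is nothing to compare against: the paper does not prove this statement at all. It is stated as a \texttt{fact} environment with citation \cite{F03} and no proof is given---the paper simply defers to Falconer's book, as is appropriate for well-known background material. Your instinct at the end of the proposal (``Since all four statements are quoted from \cite{F03}, I would likely\ldots refer to \cite{F03}'') is exactly what the paper does, except that the paper refers to \cite{F03} for all four parts rather than just (i).
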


In addition to these, box-counting dimension has one more property that turns out to be quite useful (and that other notions of dimension do not possess):

\begin{fact}[\cite{F03}]
    Let $X \subseteq \R^d$ be nonempty and bounded. Then $d_{\overline{B}}(X) = d_{\overline{B}}(\overline{X})$, and $d_{\underline{B}}(X) = d_{\underline{B}}(\overline{X})$.
\end{fact}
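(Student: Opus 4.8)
The plan is to prove this via the sandwiching inequalities $N(X,\epsilon) \leq N(\overline{X},\epsilon) \leq N(X,\epsilon')$ for a suitable $\epsilon'$ comparable to $\epsilon$. Since $X \subseteq \overline{X}$, the monotonicity of box-counting (the fact that $X \subseteq Y$ implies $d_{\overline{B}}(X) \leq d_{\overline{B}}(Y)$, similarly for the lower dimension) immediately gives $d_{\overline{B}}(X) \leq d_{\overline{B}}(\overline{X})$ and $d_{\underline{B}}(X) \leq d_{\underline{B}}(\overline{X})$. So the work is entirely in the reverse direction, namely showing that covering $X$ by grid boxes automatically covers a neighborhood of $X$, and hence covers $\overline{X}$, once the grid is slightly coarsened.

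The key observation is the following. Fix $\epsilon > 0$ and suppose the grid boxes $I_{\vec{z}}$ for $\vec{z}$ ranging over a finite set $Z$ cover $X$, with $|Z| = N(X,\epsilon)$. I claim that the closed boxes with the same centers but side length $3\epsilon$, i.e.\ the union $U = \bigcup_{\vec{z} \in Z}(I_{\vec{z}} + [-\epsilon,\epsilon]^d)$, is a closed set containing an open neighborhood of $X$: indeed any point within distance $\epsilon$ (in the sup-norm) of some $I_{\vec{z}}$ lies in $I_{\vec{z}} + [-\epsilon,\epsilon]^d$, and since the $I_{\vec{z}}$ cover $X$, every point of $X$ has such a neighborhood inside $U$. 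Because $U$ is closed and contains $X$, it contains $\overline{X}$. Now each enlarged box $I_{\vec{z}} + [-\epsilon,\epsilon]^d$ is a box of side length $3\epsilon$, which can be covered by at most $3^d$ grid boxes of side length $\epsilon$ (from a possibly shifted grid) — or, more cleanly, by grid boxes of side length $3\epsilon$: work with the coarser grid of mesh $3\epsilon$, where each enlarged box meets at most $2^d$ boxes. Either way we obtain $N(\overline{X}, 3\epsilon) \leq C_d \cdot N(X,\epsilon)$ for a dimensional constant $C_d$ (e.g.\ $C_d = 2^d$ using mesh $3\epsilon$).

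Feeding this into the definition of upper box-counting dimension: along the sequence $\epsilon \to 0$ we have
\begin{align*}
\frac{\log N(\overline{X}, 3\epsilon)}{\log \frac{1}{3\epsilon}} \leq \frac{\log C_d + \log N(X,\epsilon)}{\log \frac{1}{\epsilon} - \log 3}.
\end{align*}
Taking $\limsup$ as $\epsilon \to 0$, the additive constant $\log C_d$ and the $\log 3$ shift in the denominator both wash out, so $d_{\overline{B}}(\overline{X}) \leq d_{\overline{B}}(X)$; the same computation with $\liminf$ gives $d_{\underline{B}}(\overline{X}) \leq d_{\underline{B}}(X)$. Combined with the trivial inequalities from monotonicity, this yields $d_{\overline{B}}(X) = d_{\overline{B}}(\overline{X})$ and $d_{\underline{B}}(X) = d_{\underline{B}}(\overline{X})$. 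I should also note at the outset that $\overline{X}$ is still nonempty and bounded (boundedness of $X$ gives boundedness of $\overline{X}$), so $N(\overline{X},\epsilon)$ is finite and the definitions apply.

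The only mildly delicate point — the ``main obstacle,'' though it is minor — is bookkeeping the relationship between ``number of grid boxes needed'' across two different mesh sizes ($\epsilon$ versus $3\epsilon$) and making sure the constant is genuinely dimensional and the logarithmic asymptotics are insensitive to it; this is a standard feature of box-counting dimension (the reason different conventions for box size agree) and I would either cite it from \cite{F03} or include the two-line $\limsup$ computation above. Everything else is elementary point-set topology: closed sets containing $X$ contain $\overline{X}$, and an $\epsilon$-neighborhood of a finite union of $\epsilon$-boxes is contained in a finite union of $3\epsilon$-boxes.
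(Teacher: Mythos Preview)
Your proof is correct and is the standard argument. Note, however, that the paper does not actually prove this statement: it is recorded as a \emph{Fact} with a citation to \cite{F03} and no proof is given in the paper itself, so there is nothing to compare your approach against beyond saying it matches the usual textbook treatment.
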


\subsection{Definition of Hausdorff dimension}\label{Hdim}

Hausdorff dimension is the other notion of dimension that we will use in this paper. It is considerably more popular within fractal geometry, probably due to its compatibility with measure-theoretic notions. To define Hausdorff dimension, we must first define the notion of Hausdorff \textit{measure,} a family of outer measures on subsets of $\R^d$:

\begin{defn}[\cite{F03}]
    Let $X$ be a nonempty Borel subset of $\R^d$. For $s \geq 0, \epsilon > 0$ we define:
    
    $$\mu_H^s(X, \epsilon) = \inf \left\{\sum_{i=1}^\infty (\Diam U_i)^s : \{U_i\}_i \text{ is a collection sets of diameter at most $\epsilon$ covering $X$}\right\}$$
    
    The \textit{$s$-dimensional Hausdorff measure} of $X$, $\mu_H^s(X)$, is the limit of $\mu_H^s(X, \epsilon)$ as $\epsilon \to 0$.
\end{defn}

One precaution: recall that when we defined box-counting dimension above, we mentioned that it does not matter if the covering sets are boxes, balls, or any set with a given diameter. 
This is not the case with Hausdorff measure. 
Although the Hausdorff \textit{dimension} of $X$ would ultimately be the same if we changed these details in the above definition, the measure itself could be different.

Note that for subsets of $\mathbb{R}$ with Hausdorff measure one, the Hausdorff measure agrees with the Lebesgue measure.
A given set $X$ will only have a ``meaningful'' (i.e. nonzero and finite) Hausdorff measure for at most one value of $s$. 
Consider once more the example of a polygon in $\R^2$. The $2$-dimensional Hausdorff measure of a polygon is, up to a constant factor of $\frac{\pi}{4}$, its area. 
But the $s$-dimensional Hausdorff measure will be infinite for any $s < 2$ and zero for any $s > 2$. 
This suggests the following definition of Hausdorff dimension:

\begin{defn}[\cite{F03}]
    For $X \subseteq \R^d$ nonempty, the \textit{Hausdorff dimension} $d_H(X)$ is the unique real number $s$ such that $\mu_H^{s'}(X) = \infty$ for $s' < s$ and $\mu_H^{s'}(X) = 0$ for $s' > s$.
\end{defn}

Note that when $s' = s$, the Hausdorff measure may or may not be finite and may or may not be zero. What matters for determining dimension is the limiting behavior on either side of the critical value.

Hausdorff dimension has the properties we expect any notion of dimension to have:

\begin{fact}[\cite{F03}]
\;
    \begin{enumerate}[(i)]
        \item If $X$ is a smooth $n$-manifold (embedded in $\R^d$), then $d_H(X) = n$.
        \item If $X \subseteq Y$, then $d_H(X) \leq d_H(Y)$.
        \item If $X = Y_1 \cup Y_2$, then $d_H(X) = \max(d_H(Y_1), d_H(Y_2))$.
        \item Invertible affine transformations of $\R^d$ preserve $d_H$.
    \end{enumerate}
\end{fact}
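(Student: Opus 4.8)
The plan is to derive all four items straight from the definition of $\mu_H^s$, using only two elementary structural facts about it: that $\mu_H^s$ is countably subadditive (an outer measure), and that it transforms in a controlled way under bi-Lipschitz maps. These are the classical arguments from \cite{F03}, included here for self-containment. Item (ii) is immediate: if $X \subseteq Y$ then any countable cover of $Y$ by sets of diameter at most $\epsilon$ is also such a cover of $X$, so $\mu_H^s(X,\epsilon) \le \mu_H^s(Y,\epsilon)$ for all $s,\epsilon$; letting $\epsilon \to 0$ gives $\mu_H^s(X) \le \mu_H^s(Y)$, so $\mu_H^{s'}(Y)=0$ forces $\mu_H^{s'}(X)=0$, and comparing critical exponents yields $d_H(X) \le d_H(Y)$.

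For (iii), the bound $d_H(X) \ge \max(d_H(Y_1),d_H(Y_2))$ follows from (ii). For the reverse, I would first record countable subadditivity of $\mu_H^s$: given covers of $Y_1$ and $Y_2$ by sets of diameter at most $\epsilon$ whose $s$-th power sums are within $\eta$ of the corresponding infima, their union covers $Y_1 \cup Y_2$, giving $\mu_H^s(Y_1 \cup Y_2,\epsilon) \le \mu_H^s(Y_1,\epsilon) + \mu_H^s(Y_2,\epsilon) + 2\eta$; letting $\eta \to 0$ and then $\epsilon \to 0$ gives $\mu_H^s(Y_1 \cup Y_2) \le \mu_H^s(Y_1) + \mu_H^s(Y_2)$. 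Now if $s > \max(d_H(Y_1),d_H(Y_2))$ then both summands vanish, so $\mu_H^s(Y_1 \cup Y_2)=0$, forcing $d_H(Y_1\cup Y_2)\le s$; taking the infimum over such $s$ finishes the item. The same argument applies verbatim to countable unions, which is needed for (i). For (iv), the key point is that an invertible affine map $T\colon\R^d\to\R^d$ is bi-Lipschitz: there are $0 < c_1 \le c_2$ with $c_1\lVert x-y\rVert \le \lVert Tx-Ty\rVert \le c_2\lVert x-y\rVert$ for all $x,y$. Hence $\Diam(T(U)) \le c_2 \Diam(U)$, so applying $T$ to a cover of $X$ by sets of diameter at most $\epsilon$ produces a cover of $T(X)$ by sets of diameter at most $c_2\epsilon$ with $\sum(\Diam T(U_i))^s \le c_2^s\sum(\Diam U_i)^s$, whence $\mu_H^s(T(X)) \le c_2^s\mu_H^s(X)$; applying the same reasoning to $T^{-1}$ gives the opposite inequality. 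In particular $\mu_H^s(X)=0 \iff \mu_H^s(T(X))=0$, so $d_H(T(X))=d_H(X)$.

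Item (i) takes the most work. First I would reduce to the local Euclidean model: a smooth $n$-manifold $X\subseteq\R^d$ is a countable union of pieces each diffeomorphic to an open subset of $\R^n$, and on each relatively compact piece the diffeomorphism and its inverse are Lipschitz, hence the piece is bi-Lipschitz equivalent to an open subset of $\R^n\subseteq\R^d$. The bi-Lipschitz invariance argument from (iv) (which never used affineness) shows each piece has the same Hausdorff dimension as an open subset of $\R^n$, and the countable-union form of (iii) then reduces the claim to showing $d_H(U)=n$ for $U\subseteq\R^n$ open and nonempty; by monotonicity it suffices to take $U$ a single open cube.

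The hard part will be establishing $0 < \mu_H^n(\text{cube}) < \infty$. The upper bound is routine: covering a unit cube by $m^n$ subcubes of side $1/m$ gives $\mu_H^n(\text{cube},\sqrt n/m) \le m^n(\sqrt n/m)^n = n^{n/2}$. For the positive lower bound I would compare against Lebesgue measure: any set $U_i$ of diameter $\delta_i$ lies in a ball of radius $\delta_i$, so $\mathrm{Leb}(U_i) \le \omega_n\delta_i^n$ with $\omega_n$ the volume of the unit ball, and therefore for any cover of the cube, $\sum_i\delta_i^n \ge \omega_n^{-1}\sum_i\mathrm{Leb}(U_i) \ge \omega_n^{-1}\mathrm{Leb}(\text{cube}) > 0$, a bound uniform in $\epsilon$. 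Together these pin $d_H(\text{cube})=n$, and the reduction above gives $d_H(X)=n$. (One could instead invoke the sharper fact that $\mu_H^n$ is a dimensional constant times Lebesgue measure, mirroring the remark for $n=1$ preceding the definition of Hausdorff dimension, but only the crude two-sided estimate is required here.)
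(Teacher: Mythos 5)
The paper states this as a Fact cited from Falconer \cite{F03} without giving a proof, and your argument is the standard one found there: monotonicity and (countable) subadditivity of the outer measures $\mu_H^s$ for (ii)--(iii), bi-Lipschitz distortion of covers for (iv), and for (i) a reduction to relatively compact coordinate patches followed by the two-sided comparison of $\mu_H^n$ with Lebesgue measure on a cube. All steps are correct.
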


In fact, Hausdorff dimension satisfies a stronger version of (iii) above:

\begin{fact}[\cite{F03}]
    If $X = \bigcup_{i \in \N} Y_i$, then $d_H(X) = \max_{i \in N} d_H(Y_i)$.
\end{fact}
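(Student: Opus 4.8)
The plan is to prove the countable-stability property of Hausdorff dimension by reducing to the finite case of fact (iii) above together with a limiting argument via the outer-measure structure of $\mu_H^s$. First I would fix $s' > \max_{i \in \N} d_H(Y_i)$; by definition of Hausdorff dimension, $\mu_H^{s'}(Y_i) = 0$ for every $i$. Since $\mu_H^{s'}$ is an outer measure (it is defined as an infimum of covering sums, so it is countably subadditive on arbitrary subsets of $\R^d$), we get $\mu_H^{s'}(X) \le \sum_{i \in \N} \mu_H^{s'}(Y_i) = 0$, hence $\mu_H^{s'}(X) = 0$. This shows $d_H(X) \le s'$ for every such $s'$, so $d_H(X) \le \max_{i \in \N} d_H(Y_i)$.

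For the reverse inequality, I would use monotonicity: each $Y_i \subseteq X$, so by fact (ii) we have $d_H(Y_i) \le d_H(X)$ for all $i$, and therefore $\max_{i \in \N} d_H(Y_i) \le d_H(X)$. Combining the two inequalities gives equality. (One should note the supremum over the countable family is attained or, if not literally attained, the statement should be read with $\sup$ in place of $\max$; since this is quoted verbatim from \cite{F03} I would simply follow that convention and remark that when the supremum is not attained the same argument shows $d_H(X) = \sup_i d_H(Y_i)$, taking $s'$ arbitrarily close to the supremum from above.)

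The only subtlety — and the step I'd be most careful about — is justifying countable subadditivity of $\mu_H^{s'}$ directly from the definition given in the excerpt, since the definition is phrased for nonempty Borel sets and via covers by sets of diameter at most $\epsilon$. The argument is the standard one: given $\delta > 0$, for each $i$ pick a cover $\{U_{i,j}\}_j$ of $Y_i$ by sets of diameter at most $\epsilon$ with $\sum_j (\Diam U_{i,j})^{s'} \le \mu_H^{s'}(Y_i, \epsilon) + \delta 2^{-i}$; then $\{U_{i,j}\}_{i,j}$ is a countable cover of $X = \bigcup_i Y_i$ by sets of diameter at most $\epsilon$, so $\mu_H^{s'}(X,\epsilon) \le \sum_i \mu_H^{s'}(Y_i,\epsilon) + \delta \le \sum_i \mu_H^{s'}(Y_i) + \delta$; letting $\epsilon \to 0$ and then $\delta \to 0$ yields $\mu_H^{s'}(X) \le \sum_i \mu_H^{s'}(Y_i)$. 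Since this is a well-known fact about Hausdorff measure, in the paper I would either cite it from \cite{F03} or include this short computation; no genuine obstacle arises.
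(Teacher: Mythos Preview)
Your argument is correct and is the standard proof of countable stability of Hausdorff dimension. Note, however, that the paper does not supply its own proof of this statement: it is recorded as a \emph{Fact} cited from \cite{F03}, so there is no in-paper proof to compare against. Your write-up is exactly the argument one finds in Falconer, including the remark that the statement should properly read $\sup$ rather than $\max$.
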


As a corollary, the Hausdorff dimension of any countable set is zero (as that of a point is zero), so Hausdorff dimension is invariant under the addition or removal of a countable set of points. 
This is very unlike box-counting dimension: note that box-counting dimension is preserved under closures. 
Because $\R^d$ is separable, stability under closures and stability under the addition of countably many points are properties directly at odds with each other.

In particular, consider the set $X = \Q \cap [0, 1]$. 
The closure of $X$ is the interval $[0, 1]$, which has box-counting dimension $1$; hence $X$ has box-counting dimension $1$. 
Yet $X$ is countable and thus has Hausdorff dimension $0$. 
This gives an explicit example of when Hausdorff and box-counting dimension may differ. 
Note, however, that when they do differ, it is always the box-counting dimension that is higher:

\begin{fact}[\cite{F03}]
    Let $X$ be a nonempty and bounded subset of $\R^d$. Then $d_H(X) \leq d_{\underline{B}}(X) \leq d_{\overline{B}}(X)$.
\end{fact}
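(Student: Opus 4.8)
The plan is to prove the two inequalities separately. The right-hand inequality $d_{\underline{B}}(X) \le d_{\overline{B}}(X)$ is immediate from the definitions, since for any function $f(\epsilon)$ one has $\liminf_{\epsilon \to 0} f(\epsilon) \le \limsup_{\epsilon \to 0} f(\epsilon)$; applying this to $f(\epsilon) = \frac{\log N(X,\epsilon)}{\log(1/\epsilon)}$ gives the claim. So all the content lies in showing $d_H(X) \le d_{\underline{B}}(X)$, and for this the strategy is: show that $\mu_H^s(X) = 0$ for every real $s > d_{\underline{B}}(X)$, which by the definition of Hausdorff dimension forces $d_H(X) \le s$; letting $s$ range over all values above $d_{\underline{B}}(X)$ then yields $d_H(X) \le d_{\underline{B}}(X)$.

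To carry this out, fix $s > d_{\underline{B}}(X)$ and choose an intermediate exponent $s'$ with $d_{\underline{B}}(X) < s' < s$. Since $X$ is bounded, $N(X,\epsilon)$ is finite for every $\epsilon > 0$, and since $d_{\underline{B}}(X) = \liminf_{\epsilon\to 0}\frac{\log N(X,\epsilon)}{\log(1/\epsilon)} < s'$, there is a sequence $\epsilon_n \to 0$ along which $\frac{\log N(X,\epsilon_n)}{\log(1/\epsilon_n)} < s'$, i.e. $N(X,\epsilon_n) < \epsilon_n^{-s'}$. For each $n$, fix a cover of $X$ by $N(X,\epsilon_n)$ grid cubes of side length $\epsilon_n$; each such cube has diameter $\sqrt{d}\,\epsilon_n$, so this is an admissible cover for computing $\mu_H^s(X, \sqrt{d}\,\epsilon_n)$, giving
$$\mu_H^s\bigl(X, \sqrt{d}\,\epsilon_n\bigr) \;\le\; N(X,\epsilon_n)\,\bigl(\sqrt{d}\,\epsilon_n\bigr)^s \;<\; d^{s/2}\,\epsilon_n^{\,s-s'}.$$

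Now let $n \to \infty$: since $s - s' > 0$ and $\epsilon_n \to 0$, the right-hand side tends to $0$. Because $\mu_H^s(X, \delta)$ is non-increasing in $\delta$ and converges to $\mu_H^s(X)$ as $\delta \to 0$, evaluating along the subsequence $\delta = \sqrt{d}\,\epsilon_n \to 0$ shows $\mu_H^s(X) = 0$. Hence $d_H(X) \le s$ for every $s > d_{\underline{B}}(X)$, completing the argument.

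The proof is essentially bookkeeping with the definitions, so I do not anticipate a serious obstacle; the one point that requires care is the insertion of the auxiliary exponent $s'$ strictly between $d_{\underline{B}}(X)$ and $s$. Without it, bounding $N(X,\epsilon_n)$ by $\epsilon_n^{-s}$ would only yield $\mu_H^s(X,\sqrt{d}\,\epsilon_n) \le d^{s/2}$, a finite but nonzero bound, which is not enough to conclude $d_H(X) \le s$; the slack $\epsilon_n^{s-s'} \to 0$ coming from the gap between $s'$ and $s$ is exactly what drives the Hausdorff measure to zero. It is also worth recording that only the lower box dimension (a liminf, hence a statement about a favorable sequence of scales $\epsilon_n$) is needed here, which is why the sharper inequality $d_H(X) \le d_{\underline{B}}(X)$, rather than merely $d_H(X) \le d_{\overline{B}}(X)$, falls out of the same computation.
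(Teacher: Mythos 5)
Your proof is correct. The paper does not prove this statement at all -- it is quoted as a known fact with a citation to Falconer [F03] -- and your argument is precisely the standard textbook one: the right-hand inequality is $\liminf \le \limsup$, and for the left-hand inequality you correctly insert the auxiliary exponent $s'$ so that the cover by $N(X,\epsilon_n)$ grid cubes of diameter $\sqrt{d}\,\epsilon_n$ yields $\mu_H^s(X,\sqrt{d}\,\epsilon_n) \le d^{s/2}\epsilon_n^{s-s'} \to 0$, forcing $\mu_H^s(X)=0$ for every $s > d_{\underline{B}}(X)$.
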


\section{Entropy and its relationship to dimension}\label{entropy}
\subsection{Properties of entropy}

It will be helpful to establish several properties of entropy before connecting it to fractal dimension.
First, we need a couple results from \cite{S85} concerning the monotonicity of entropy under the subset relation and union operation, and $\omega$-language prefixes.

\begin{fact}[\cite{S85}, Proposition 1]
\label{language_entropy_union}
    Let $L_1$ and $L_2$ be infinite languages.
    
    \begin{enumerate}[(i)]
        \item If $L_1 \subseteq L_2$, then $h(L_1) \leq h(L_2)$.
        \item $h(L_1 \cup L_2) = \max (h(L_1), h(L_2))$.
    \end{enumerate}
\end{fact}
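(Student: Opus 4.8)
The plan is to prove Fact~\ref{language_entropy_union} by working directly from the definition of entropy as a limit superior of $\frac{\log |L|_n}{n}$, together with the elementary observation that $|L_1 \cup L_2|_n = |(L_1)_n^{pre} \cup (L_2)_n^{pre}|$ satisfies $\max(|L_1|_n, |L_2|_n) \le |L_1 \cup L_2|_n \le |L_1|_n + |L_2|_n$ — here I am reading $|L|_n$ as the count of length-$n$ words (for a language of finite strings) or length-$n$ prefixes (for an $\omega$-language); the same two-sided bound holds in either case. The key point is that passing to $\limsup$ of $\frac{1}{n}\log(\cdot)$ turns this sum bound into a maximum, because $\log(a+b) \le \log 2 + \max(\log a, \log b)$ and the additive constant $\log 2$ vanishes after dividing by $n$ and taking $n \to \infty$.

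First I would prove part (i): if $L_1 \subseteq L_2$ then $|L_1|_n \le |L_2|_n$ for every $n$, so $\frac{\log |L_1|_n}{n} \le \frac{\log |L_2|_n}{n}$ for all sufficiently large $n$ (large enough that both counts are positive, which is guaranteed since both languages are infinite), and taking $\limsup$ of both sides gives $h(L_1) \le h(L_2)$; one must note the small technical point that the $\limsup$ of a pointwise-dominated sequence is dominated, and that the finitely many terms where $|L_1|_n = 0$ do not affect the $\limsup$. Then for part (ii), the inequality $h(L_1 \cup L_2) \ge \max(h(L_1), h(L_2))$ is immediate from part (i) applied to $L_1 \subseteq L_1 \cup L_2$ and $L_2 \subseteq L_1 \cup L_2$. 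For the reverse inequality, I would use $|L_1 \cup L_2|_n \le |L_1|_n + |L_2|_n \le 2\max(|L_1|_n, |L_2|_n)$, take $\log$, divide by $n$ to get $\frac{\log |L_1 \cup L_2|_n}{n} \le \frac{\log 2}{n} + \frac{\log \max(|L_1|_n,|L_2|_n)}{n} = \frac{\log 2}{n} + \max\!\left(\frac{\log |L_1|_n}{n}, \frac{\log |L_2|_n}{n}\right)$, and then take $\limsup$, using that $\limsup(x_n + y_n) \le \limsup x_n + \limsup y_n$, that $\frac{\log 2}{n} \to 0$, and that $\limsup_n \max(x_n, y_n) = \max(\limsup_n x_n, \limsup_n y_n)$.

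The last of these facts about $\limsup$ and $\max$ is the one place deserving a word of care, since it is the only non-formal manipulation: $\limsup_n \max(x_n, y_n) \ge \max(\limsup x_n, \limsup y_n)$ holds because $\max(x_n,y_n) \ge x_n$ and $\ge y_n$; for the reverse, along a subsequence realizing $\limsup_n \max(x_n,y_n)$, at least one of $x_n$, $y_n$ attains the max infinitely often, so one of the two $\limsup$'s is at least that value. I expect this to be the main (and only real) obstacle, and it is a standard lemma; alternatively one can cite that the paper has essentially built this into the definition, or simply invoke \cite{S85} directly since the statement is labelled as Proposition~1 there. In the write-up I would present this as a short self-contained argument rather than a citation, since it is so elementary, but either route closes the proof.

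\begin{proof}[Proof sketch]
For a language $L$ write $|L|_n$ for the number of its length-$n$ members (or length-$n$ prefixes, in the $\omega$-language case), so $h(L) = \limsup_n \frac{\log |L|_n}{n}$. For (i), $L_1 \subseteq L_2$ gives $|L_1|_n \le |L_2|_n$ for all $n$; discarding the finitely many $n$ with $|L_1|_n = 0$ and taking $\limsup$ yields $h(L_1) \le h(L_2)$. For (ii), applying (i) to $L_i \subseteq L_1 \cup L_2$ gives $h(L_1 \cup L_2) \ge \max(h(L_1), h(L_2))$. Conversely, $|L_1 \cup L_2|_n \le |L_1|_n + |L_2|_n \le 2 \max(|L_1|_n, |L_2|_n)$, so
$$\frac{\log |L_1 \cup L_2|_n}{n} \le \frac{\log 2}{n} + \max\!\left(\frac{\log |L_1|_n}{n}, \frac{\log |L_2|_n}{n}\right).$$
Taking $\limsup$ as $n \to \infty$, and using $\frac{\log 2}{n} \to 0$ together with $\limsup_n \max(x_n, y_n) = \max(\limsup_n x_n, \limsup_n y_n)$, we obtain $h(L_1 \cup L_2) \le \max(h(L_1), h(L_2))$, completing the proof.
\end{proof}
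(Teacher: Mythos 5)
Your proposal is correct. The paper itself gives no proof of this statement --- it is recorded as a Fact with a citation to Proposition~1 of \cite{S85} --- so there is no argument to compare against; your elementary derivation from the definition $h(L)=\limsup_n \frac{\log|L|_n}{n}$ is a complete and appropriate substitute. The two-sided bound $\max(|L_1|_n,|L_2|_n)\le|L_1\cup L_2|_n\le 2\max(|L_1|_n,|L_2|_n)$, the vanishing of $\frac{\log 2}{n}$, and the identity $\limsup_n\max(x_n,y_n)=\max(\limsup_n x_n,\limsup_n y_n)$ (which you rightly single out and justify) are exactly what is needed, and your handling of the finitely-relevant indices with $|L_1|_n=0$ (harmless since an infinite language over a finite alphabet has nonzero counts for infinitely many $n$) is sound. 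Your hedge about reading $|L|_n$ as a prefix count in the $\omega$-language case is also consistent with how the paper uses entropy elsewhere (cf.\ Fact~\ref{language_entropy_summation}), and the same bounds go through under either reading.
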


\begin{fact}[\cite{S85}]\label{lang_entropy_union}
\label{language_entropy_summation}
    Let $L$ be an infinite language. Then:
    
    $$h(L^{pre}) = \limsup_{n \to \infty} \frac{\log |L|_{\leq n}}{n} = h(L).$$
\end{fact}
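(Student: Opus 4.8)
The plan is to prove the two equalities separately, showing first that the $\limsup$ over cumulative prefix counts equals $h(L) = \limsup_n \frac{\log |L|_n}{n}$, and then that $h(L^{pre})$ agrees with this same quantity. For the first equality, the key observation is that $|L|_{\leq n} = \sum_{m=0}^{n} |L|_m$, so $|L|_n \leq |L|_{\leq n} \leq (n+1)\max_{m\leq n}|L|_m$. Taking logarithms and dividing by $n$, the factor $\log(n+1)/n \to 0$ contributes nothing to the $\limsup$, so it suffices to compare $\limsup_n \frac{\log|L|_n}{n}$ with $\limsup_n \frac{\log \max_{m\leq n}|L|_m}{n}$. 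The latter dominates the former trivially; for the reverse, note that if $m_n \leq n$ achieves the max, then $\frac{\log|L|_{m_n}}{n} \leq \frac{\log|L|_{m_n}}{m_n}$ whenever $|L|_{m_n}\geq 1$ and $m_n \geq 1$, and one pushes a subsequence argument through. Here I would need the hypothesis that $L$ is infinite to ensure $|L|_m \geq 1$ for infinitely many $m$, so the relevant quantities are eventually well-defined and the $m_n$ can be taken to grow to infinity.

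For the second equality, $h(L^{pre})$ is the entropy of the sequence $(|L^{pre}|_n)_n$, where $L^{pre}_n$ is the set of length-$n$ strings that are prefixes of \emph{some} element of $L$. Every length-$n$ string in $L$ is itself such a prefix, so $|L|_n \leq |L^{pre}|_n$ and hence $h(L) \leq h(L^{pre})$. The reverse inequality $h(L^{pre}) \leq h(L)$ is where the real content lies: each length-$n$ prefix $w \in L^{pre}_n$ extends to some element of $L$, hence to a strictly longer member of $L^{pre}$, so one wants to bound $|L^{pre}_n|$ in terms of $|L|_{\leq N}$ for $N$ somewhat larger than $n$. The clean way: if $L \subseteq \Sigma^{\omega}$ (or $\Sigma^*$) and $w\in L^{pre}_n$, pick a witness $x_w \in L$ extending $w$; in the $\omega$-case a long-enough prefix of $x_w$ of some length $N$ already lies in $L^{pre}_N$ and has $w$ as its length-$n$ prefix — but this doesn't directly bound the count. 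The standard trick is rather a pigeonhole/counting argument over a fixed finite alphabet $\Sigma$: $|L^{pre}_n| \leq |\Sigma|^{n - m}\,|L^{pre}_m|$ is useless, so instead one uses that $L^{pre}_{\leq n}$ has the same asymptotic growth as $L^{pre}_n$ by the first part of the fact applied to $L^{pre}$ (note $(L^{pre})^{pre} = L^{pre}$), reducing everything to comparing $|L^{pre}_{\leq n}|$ with $|L|_{\leq N}$.

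I expect the main obstacle to be the inequality $h(L^{pre}) \leq h(L)$, specifically controlling how much the prefix-closure can inflate counts. The resolution I would pursue: fix $\epsilon > 0$ and choose $N$ with $\frac{\log|L|_{\leq N}}{N} < h(L) + \epsilon$ (using the first equality, for $N$ along a suitable sequence); then observe that every prefix in $L^{pre}$ of length at most $N$ arises as a prefix of some member of $L$ realized within length... — this still risks circularity. The genuinely correct argument, which I would write out, is: since $\Sigma$ is finite, every $w \in L^{pre}_n$ extends to a member of $L$, and truncating that member at length $2n$ (say) gives an element of $L^{pre}_{2n}$ whose length-$n$ prefix is $w$; distinct $w$ give elements with distinct length-$n$ prefixes, so $|L^{pre}_n| \leq |L^{pre}_{2n}|$ — monotone but unhelpful on its own. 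Combining instead with the fact that for a fixed finite alphabet any prefix-closed language $P$ satisfies $|P_n| \leq |P_{\leq n}| \leq \sum_{m\leq n}|P_m|$ and that $P_{\leq n} = L^{pre}_{\leq n}$ is built from prefixes of $L_{\leq n}$ \emph{together with} prefixes of longer words, one ultimately invokes that $L$ infinite forces the growth of $|L|_{\leq n}$ to capture all prefix growth in the limit, since a word of length $n$ in $L^{pre}$ that is \emph{not} a prefix of anything in $L_{\leq n}$ must be a prefix of a longer word, and iterating bounds the deficit. I would present this as: $|L^{pre}_n| \leq \sum_{N \geq n} |\{x \in L_N : \text{no proper prefix of } x \text{ is in } L\}| \cdot$ (trivial bound), then use that the summands are controlled by $|L|_{\leq N}$ and the geometric decay from $\frac{\log|L|_{\leq N}}{N} \to h(L)$ to conclude $\limsup \frac{\log|L^{pre}_n|}{n} \leq h(L)$. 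Citing \cite{S85} for the precise bookkeeping, the upshot is that all three expressions coincide.
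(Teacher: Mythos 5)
The paper states this as a Fact imported from \cite{S85} and gives no proof of its own, so your write-up stands or falls on its own. Your first equality, $\limsup_n \frac{\log|L|_{\leq n}}{n} = h(L)$, is fine: the bound $|L|_n \le |L|_{\leq n} \le (n+1)\max_{m\le n}|L|_m$ plus the vanishing $\frac{\log(n+1)}{n}$ term is the standard argument, and the only care needed (handling the case where the maximum is attained at bounded $m$, using $h(L)\ge 0$ for infinite $L$) is the care you gesture at.

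The second equality is where the proposal breaks down, and the breakdown is not just bookkeeping. The direction $h(L^{pre}) \le h(L)$ is \emph{false} for arbitrary infinite languages: take $L = \{w0^{|w|^2} : w \in \{0,1\}^*\}$. Then $|L|_n = 2^m$ when $n = m+m^2$ and $0$ otherwise, so $h(L) = \limsup_m \frac{m\log 2}{m+m^2} = 0$, while every $w \in \{0,1\}^m$ lies in $L^{pre}$, so $h(L^{pre}) = \log 2$. Hence no purely combinatorial argument of the kind you are circling (and you never actually land on one --- the final sketch with $\sum_{N\ge n}$ and ``geometric decay'' is incoherent, since $|L|_{\leq N}$ grows rather than decays and the sum diverges) can close this direction. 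The missing idea is that the Fact is really about \emph{regular} languages, which is all the paper ever applies it to: if $L$ is recognized by a finite-trim automaton with state set $Q$, then every $w \in L^{pre}$ of length $n$ extends to some $x_w \in L$ with $n \le |x_w| \le n + |Q|$, and $w \mapsto x_w$ is injective because $w$ is recovered from $x_w$ as its length-$n$ prefix. This gives $|L^{pre}|_n \le |L|_{\leq n+|Q|}$, and since the shift by the constant $|Q|$ does not affect the limit superior, $h(L^{pre}) \le \limsup_m \frac{\log|L|_{\leq m}}{m} = h(L)$ by your first equality; the easy inclusion $L \cap \Sigma^n \subseteq L^{pre}\cap\Sigma^n$ gives the converse. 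You should either add the regularity hypothesis explicitly or isolate the bounded-extension property it provides; as written, the hard inequality is both unproved and unprovable.
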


In the case where $L$ is closed under prefixes, we can define its entropy to be a limit, rather than a limit superior. 
To do this, we need a lemma concerning periodic functions:

\begin{lem}
\label{periodic_thing}
    Let $n > 1$, $P_1, \dots, P_n > 0$, and $\epsilon > 0$. 
    Let $S$ be the set of positive integers $z$ such that $\frac{z}{P_i} - \lfloor \frac{z}{P_i} \rfloor < \epsilon$ for $i = 1, \dots, n$. 
    Then there exists $N$ such that no $N$ consecutive positive integers lie outside $S$.
\end{lem}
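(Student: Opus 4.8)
The plan is to recast the arithmetic condition defining $S$ as a recurrence statement for a torus rotation, and then to extract the bounded-gap (syndetic) conclusion from minimality together with compactness. First I would pass to a normal form. Since $\frac{z}{P_i}-\lfloor\frac{z}{P_i}\rfloor$ equals the fractional part of $z\beta_i$, where $\beta_i$ denotes the fractional part of $1/P_i$, we may replace each $1/P_i$ by $\beta_i\in[0,1)$; a coordinate with $\beta_i=0$ imposes no constraint and is discarded, and if $\epsilon\geq 1$ then $S=\mathbb{Z}_{>0}$ and there is nothing to prove, so assume $0<\epsilon<1$ and $\beta_i\in(0,1)$ for all $i$. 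Writing $\vec\beta=(\beta_1,\dots,\beta_n)$ and letting $T\colon(\mathbb{R}/\mathbb{Z})^n\to(\mathbb{R}/\mathbb{Z})^n$ be the rotation $x\mapsto x+\vec\beta$, we have $S=\{z\in\mathbb{Z}_{>0}:T^z(\vec 0)\in B\}$ with $B=[0,\epsilon)^n\subseteq(\mathbb{R}/\mathbb{Z})^n$, and the goal is to show this return-time set has bounded gaps.

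The engine is the following standard fact. Let $H$ be the closure of the orbit $\{T^z(\vec 0):z\geq 0\}$; it is a closed subgroup of the torus containing $\vec 0$ and $\vec\beta$, and since the orbit of $\vec 0$ is dense in $H$, the restriction $T|_H$ is a minimal homeomorphism of the compact space $H$. If $\mathcal{U}$ is any nonempty relatively open subset of $H$, then by minimality every $T$-orbit in $H$ meets $\mathcal{U}$, so $\{T^{-j}(\mathcal{U}):j\geq 0\}$ is an open cover of $H$; by compactness finitely many of these cover $H$, yielding an $N$ with $H=\bigcup_{j=0}^{N}T^{-j}(\mathcal{U})$. Hence every point of $H$ enters $\mathcal{U}$ within $N$ steps, and in particular for each $z_0$ there is $z\in[z_0,z_0+N]$ with $T^z(\vec 0)\in\mathcal{U}$. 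So it suffices to exhibit one nonempty relatively open $\mathcal{U}\subseteq H$ with $\mathcal{U}\subseteq B$; then no $N+1$ consecutive positive integers lie outside $S$.

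Producing such a $\mathcal{U}$ is the \emph{crux}, and the step I expect to demand the most care. Two special cases are transparent: if $1,\beta_1,\dots,\beta_n$ are linearly independent over $\mathbb{Q}$ then $H$ is the full torus and $\mathcal{U}=(0,\epsilon)^n$ works; and if every $\beta_i$ is rational (in particular if the $P_i$ arise as periods) then $T$ is periodic, the orbit of $\vec 0$ returns to $\vec 0$ along a fixed progression $r\mathbb{Z}$, and that progression already lies in $S$, giving gap bound $r$ directly. For the general case I would describe $H$ through its relations lattice $\Lambda=\{m\in\mathbb{Z}^n:m\cdot\vec\beta\in\mathbb{Z}\}$: $H$ is the common kernel of the characters indexed by $\Lambda$, its identity component is near $\vec 0$ the rational subspace $\Lambda^{\perp}$, and it then suffices to produce a connected component of $H$ meeting the interior $(0,\epsilon)^n$ of $B$. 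Establishing that — analyzing $\Lambda$ under the hypothesis $P_i>0$ and keeping track of the finitely many components of $H$ — is where I anticipate the real work. A useful reduction that handles more than the rational case: if at most one $\beta_i$ is irrational, the rational coordinates confine $z$ to an arithmetic progression $r\mathbb{Z}$ on which the sole remaining condition $\{z\beta_i\}<\epsilon$ is a one-dimensional rotation-recurrence problem, syndetic by the three-distance theorem.
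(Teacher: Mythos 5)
Your reduction to a torus rotation and the minimality--plus--compactness engine are both correct, and you have correctly isolated the crux: one must exhibit a nonempty relatively open subset of the orbit closure $H$ contained in $B=[0,\epsilon)^n$. But that step cannot be carried out, because the lemma as stated is false. Take $n=2$, $P_1=\sqrt2$ and $P_2=2+\sqrt2$, so that $1/P_1=1/\sqrt2$ and $1/P_2=1-1/\sqrt2$. For every positive integer $z$ the number $z/\sqrt2$ is not an integer, so $\frac{z}{P_2}-\lfloor\frac{z}{P_2}\rfloor = 1-\left(\frac{z}{P_1}-\lfloor\frac{z}{P_1}\rfloor\right)$; the two fractional parts sum to $1$ and hence cannot both be less than $\epsilon$ once $\epsilon\le 1/2$. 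Thus $S=\varnothing$ and no $N$ exists. In your language, $H$ is the antidiagonal circle $\{(t,-t):t\in\mathbb{R}/\mathbb{Z}\}$, which meets $[0,\epsilon)^2$ only at the origin, so $H\cap(0,\epsilon)^2=\varnothing$ and no admissible $\mathcal{U}$ exists. The obstruction is exactly the one-sidedness of the box: $[0,\epsilon)^n$ is not a neighborhood of $\vec 0$ in the torus, so the fact that $\vec 0\in H$ buys you nothing, and no analysis of the relations lattice can rescue the general case.

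Your framework does, however, prove the statement the paper actually needs. The lemma is invoked in Lemma \ref{language_entropy_prefix_limit} only to arrange $\cos(n_i\theta_j)>0.9$, and since cosine is even this requires only that the distance from $z/P_i$ to the nearest integer be small, i.e.\ that the fractional part lie in $[0,\epsilon)\cup(1-\epsilon,1)$. That set \emph{is} an open neighborhood of $0$ in $\mathbb{R}/\mathbb{Z}$, so its $n$-fold product meets $H$ in a nonempty relatively open set (it contains $\vec 0$), and your compactness argument closes immediately, with no case analysis at all. For comparison, the paper's own proof reduces (by an unjustified step) to asserting that the orbit is dense in the full torus and then extracts bounded gaps from density; that density claim fails whenever $1,1/P_2,\dots,1/P_n$ are rationally dependent, so it founders on the same obstruction as your crux. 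The correct repair is to restate the lemma with the two-sided condition $\min\left(\frac{z}{P_i}-\lfloor\frac{z}{P_i}\rfloor,\,1-\frac{z}{P_i}+\lfloor\frac{z}{P_i}\rfloor\right)<\epsilon$, at which point your argument is complete and cleaner than the one in the paper.
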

\begin{proof}
    Note that with $k$ a positive integer, if $\frac{z}{kP} - \lfloor \frac{z}{kP} \rfloor < \frac{\epsilon}{k}$, then $\frac{z}{P} - \lfloor \frac{z}{P} \rfloor < \epsilon$. 
    By choosing least-common-multiple values of $P_i$ and smaller $\epsilon$, we can assume $P_i$ are all linearly independent over $\mathbb{Q}$. 
    Then at most one of them can be rational, say $P_1 = \frac{a}{b}$.
    
    We give $[0, 1)^{n-1}$ the toroidal metric where we identify opposite sides, i.e. the inherited metric from its canonical bijection with $\mathbb{R}^{n-1}/\mathbb{Z}^{n-1}$.
    The sequence $((\frac{ak}{P_2} - \lfloor \frac{ak}{P_2} \rfloor, \frac{ak}{P_3} - \lfloor \frac{ak}{P_3} \rfloor, \dots, \frac{ak}{P_n} - \lfloor \frac{ak}{P_n} \rfloor))_{k \in \mathbb{Z}^+}$ is dense in the unit box under the Euclidean metric; as the toroidal metric gives rise to a strictly coarser topology, the sequence is also dense in the toroidal metric.
    
    Now let $z$ be any positive integer, and let $Z$ be the smallest multiple of $a$ not less than $z$. 
    Let $B$ be the box in $[0, 1)^{n-1}$ containing the fractional parts of the elements of $[\frac{\epsilon}{3} - \frac{Z}{P_2}, \frac{2\epsilon}{3} - \frac{Z}{P_2}] \times \dots \times [\frac{\epsilon}{3} - \frac{Z}{P_n}, \frac{2\epsilon}{3} - \frac{Z}{P_n}]$. 
    Then by density, there is some $k \leq m$ such that $(\frac{ak}{P_2} - \lfloor \frac{ak}{P_2} \rfloor, \frac{ak}{P_3} - \lfloor \frac{ak}{P_3} \rfloor, \dots, \frac{ak}{P_n} - \lfloor \frac{ak}{P_n} \rfloor) \in B$. 
    Therefore, $(\frac{Z+ak}{P_2} - \lfloor \frac{Z+ak}{P_2} \rfloor, \frac{Z+ak}{P_3} - \lfloor \frac{Z+ak}{P_3} \rfloor, \dots, \frac{Z+ak}{P_n} - \lfloor \frac{Z+ak}{P_n} \rfloor) \in [\frac{\epsilon}{3}, \frac{2\epsilon}{3}]^{n-1}$.
    
    We have thus found a positive integer $Z+ak$ less than $z + a + am$ such that $\frac{Z+ak}{P_i} - \lfloor \frac{Z+ak}{P_i} \rfloor < \epsilon$ for $2 \leq i \leq n$; and $\frac{Z+ak}{P_1} - \lfloor \frac{Z+ak}{P_1} \rfloor = 0$ because $Z+ak$ is a multiple of $a$. 
    Setting $N = a + am + 1$ thus proves the lemma.
\end{proof}

The following fact is used in Corollary 9 of \cite{S85}, but not proven explicitly. 
Hence, we include the following result for completeness.

\begin{lem}
\label{language_entropy_prefix_limit}
	Let $L$ be a regular language closed under prefixes. 
	Then $\lim_{n \to \infty} \frac{\log |L|_n}{n}$ exists.
\end{lem}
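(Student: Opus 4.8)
The plan is to exploit the structure of a DFA recognizing $L$, using the fact that $|L|_n$ counts lengths of paths from the start state to an accept state, and that closure under prefixes forces every reachable state to be accepting (more precisely, forces the language to behave subadditively-like in a way that submultiplicativity plus Fekete can handle). First I would reduce to a convenient automaton: since $L$ is regular, take a DFA $\calA=(Q,\Sigma,\delta,\{s\},F)$ recognizing it, and discard states not reachable from $s$; because $L$ is closed under prefixes, every reachable state from which some accepting path exists is itself accepting (otherwise a prefix of an accepted word would be rejected), so in the finite-trim version of $\calA$ every state is an accept state. Thus $|L|_n$ equals the number of length-$n$ walks in the digraph of $\calA$ starting at $s$, i.e.\ $|L|_n = \sum_{q\in Q} (\mathbf{e}_s^T M^n)_q$ where $M$ is the adjacency matrix (counting multiplicities over $\Sigma$) of this digraph.

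Next I would analyze $M$ via its strongly connected components. Standard Perron–Frobenius theory for nonnegative matrices gives that $\lim_n \frac{1}{n}\log (M^n)_{ij}$, when the limit exists, is controlled by the spectral radii of the SCCs lying on a path from $i$ to $j$; the subtlety is \emph{periodicity} within an SCC, which can make $(M^n)_{ij}$ oscillate (e.g.\ be zero for $n$ in certain residue classes) so that only a $\limsup$ exists a priori. This is exactly the obstacle the lemma must overcome, and it is where Lemma~\ref{periodic_thing} is designed to be used: the periods of the various SCCs reachable along walks from $s$ play the role of $P_1,\dots,P_n$, and the lemma guarantees that one cannot have arbitrarily long runs of lengths $n$ that simultaneously ``miss'' the good residue classes of all relevant cycles. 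Concretely, I would show that if $\rho$ is the maximum spectral radius among SCCs reachable from $s$ (equivalently $h(L)=\log\rho$), then for every $\epsilon>0$ there is $N$ such that among any $N$ consecutive values of $n$ at least one has $|L|_n \ge \rho^{(1-\epsilon)n}$, while the submultiplicativity-type bound $|L|_{m+n} \le C\,|L|_m'\cdot(\text{stuff})$ or a direct eigenvalue bound gives $|L|_n \le \mathrm{poly}(n)\,\rho^n$ for all $n$. Combining these two one-sided estimates forces $\frac{\log|L|_n}{n}\to\log\rho$.

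The main obstacle, then, is handling periodicity cleanly: making precise the claim that the ``good'' lengths for a cyclic class of period $p$ and spectral radius $r$ are (eventually) those $n$ with $n$ in a fixed residue class mod $p$, and that chaining through several SCCs along a walk multiplies contributions so that the set of $n$ achieving growth rate near $\rho^n$ is syndetic (bounded gaps) — which is precisely the output of Lemma~\ref{periodic_thing} with the $P_i$ taken to be the periods of the dominant SCCs. A secondary point requiring care is that a walk from $s$ may pass through several SCCs; I would note that passing between SCCs only adds a bounded number of transitions and that the growth rate is governed by the single worst SCC visited, so no loss occurs. Once the $\limsup$ is pinned to $\log\rho$ from above (via eigenvalue estimates, using Fact~\ref{language_entropy_summation} to identify it with $h(L)$) and the $\liminf$ is shown to be at least $\log\rho$ via the syndeticity argument, the limit exists and equals $h(L)$.
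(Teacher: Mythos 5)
Your proposal is correct and follows essentially the same route as the paper: analyze the adjacency matrix of a (trimmed, all-states-accepting) DFA for $L$, identify the growth rate with the dominant spectral radius $\rho$, use Lemma~\ref{periodic_thing} to produce a syndetic set of lengths $n$ on which $|L|_n$ is within a subexponential factor of $\rho^n$, and use prefix-closure (so that $|L|_{n'}$ and $|L|_{n}$ differ by at most a factor $|\Sigma|^{|n-n'|}$) to propagate the estimate to all $n$. The only cosmetic difference is that the paper applies Lemma~\ref{periodic_thing} to the arguments $\theta_j$ of the complex eigenvalues in the explicit expansion $|L|_n=\sum_i p_i(n)\lambda_i^n$ rather than to SCC periods, but this is the same idea in a different coordinate system.
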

\begin{proof}
	Let the deterministic finite automaton $\mathcal{A} = (Q, \Sigma, \delta, S, F)$ recognize $L$. 
	Let $\{q_1, \dots, q_m\} = Q$, and assume without loss of generality that $S = \{q_1\}$.
	Let $N_{n,i}$ be the number of strings of length $n$ of which there is a run in $\mathcal{A}$ from $q_1$ to $q_i$, and let $t_{i,j} = |\{\sigma \in \Sigma : q_j \in \delta(q_i, \sigma)\}|$. Then:
	
	$$N_{n+1,j} = \sum_{i \in I} t_{i,j} N_{n,i}$$
	$$|L|_{=(n+1)} = \sum_{i \in A} N_{n+1,i} = \sum_{j \in A} \sum_{i \in I} t_{i,j} N_{n,i}$$
	
	We thus have a system of recurrences that can be put in matrix form:
	
	\begin{align}
		\begin{bmatrix}
			N_{n+1, 1} \\
			N_{n+1, 2} \\
			\vdots \\
			N_{n+1, m} \\
			|L|_{n+1}
		\end{bmatrix}
		=
		\begin{bmatrix}
			t_{1,1} & t_{2,1} & \dots & t_{m,1} & 0 \\
			t_{1,2} & t_{2,2} & \dots & t_{m,2} & 0 \\
			\vdots & \vdots & \ddots & \vdots & \vdots \\
			t_{1,m} & t_{2,m} & \dots & t_{m,m} & 0 \\
			\sum_{j \in A} t_{1,j} & \sum_{j \in A} t_{2,j} & \dots & \sum_{j \in A} t_{m,j} & 0
		\end{bmatrix}
		\begin{bmatrix}
			N_{n, 1} \\
			N_{n, 2} \\
			\vdots \\
			N_{n, m} \\
			|L|_n
		\end{bmatrix}
	\end{align}
	
	Write the above as $\vec{v}_{n+1} = \mathbf{A} \vec{v}_n$. We have $\vec{v}_0 = [1, 0, \dots, 0, 1]^T$. 
	This then allows us to write $\vec{v}_{n} = \mathbf{A}^n \vec{v}_0$ and $|L|_n = [0, 0, \dots, 0, 1] \mathbf{A}^n \vec{v}_0$. 
	Using linear algebra techniques, we may then derive an expression for $|L|_n$:
	
	$$|L|_n = p_1(n) \lambda_1^n + \dots + p_r(n) \lambda_r^n$$
	
	where $p_i(n)$ are polynomials and $\lambda_i$ are the eigenvalues of $\mathbf{A}$. 
	Write $\lambda_i$ in polar form as $\rho_i e^{\mathbf{i} \theta_i}$. 
	Then taking the real part of both sides, we get:
	
	$$|L|_n = \mathrm{Re}(p_1(n)) (\rho_1^n \cos(n \theta_1)) + \dots + \mathrm{Re}(p_r(n)) (\rho_r^n \cos(n \theta_r))$$
	
	Let $n_1, n_2, \dots$ be an increasing sequence such that $\cos(n_i \theta_j) > 0.9$ for any $i, j$. 
	Let $n_{i+1} - n_i$ be bounded above by $N$; because the cosine function is periodic, we can find such an $N$ by Lemma \ref{periodic_thing}. Define:
	
	$$A_i = \mathrm{Re}(p_1(n_i)) \rho_1^{n_i} + \dots + \mathrm{Re}(p_r(n_i)) \rho_r^{n_i}$$
	
	Assume without loss of generality that $\rho_j$ above are in nonincreasing order, and let $k$ be such that $\rho_1 = \dots = \rho_k \neq \rho_{k+1}$. 
	The polynomial $\mathrm{Re}(p_1) + \dots + \mathrm{Re}(p_k)$ is eventually monotone and unbounded; if it is eventually decreasing, then $A_i \to -\infty$, which cannot be, as $|L|_{n_i}$ is nonnegative.
	
	Let $B_i = \mathrm{Re}(p_1(n_i)) \rho_1^{n_i} + \dots + \mathrm{Re}(p_k(n_i)) \rho_k^{n_i}$. 
	From the above, we know $B_i$ is eventually increasing to $+\infty$. Moreover, the ratio $\frac{A_i - B_i}{B_i}$ is vanishingly small. 
	So eventually, $0.8 B_i < |L|_{n_i} < 1.1 B_i$.
	
	Write $B_i = P(n_i) \rho_1^{n_i}$, with $P$ polynomial. Then:
	
	$$\lim_{i \to \infty} \frac{\log B_i}{n_i}$$
	$$= \lim_{i \to \infty} \frac{\log (P(n_i) \rho_1^{n_i})}{n_i}$$
	$$= \lim_{i \to \infty} \frac{\log P(n_i) + \log \rho_1^{n_i}}{n_i}$$
	$$= \lim_{i \to \infty} \frac{\log P(n_i)}{n_i} + \frac{n_i \log \rho_1}{n_i}$$
	$$= \log \rho_1.$$
	
	Therefore, because $0.8 B_i < |L|_{n_i} < 1.1 B_i$, we note that $\lim_{i \to \infty} \frac{\log |L|_{n_i}}{n_i} = \log \rho_1$ as well.
	
	Now, let $n_i \leq n'_i \leq n_{i+1}$. 
	Because $n_{i+1} \leq n'_i + N$, we have $|L|_{n_{i+1}} \leq |L|_{n'_i} |\Sigma|^N$. 
	By the same logic, $|L|_{n'_i} \leq |L|_{n_i} |\Sigma|^N$. 
	Letting $M = |\Sigma|^N$ then gives us $\frac{1}{M} |L|_{n_{i+1}} \leq |L|_{n'_i} \leq M |L|_{n_i}$. So:
	
	$$\log \rho_1 = \lim_{i \to \infty} \frac{\log |L|_{n_{i+1}}}{n_{i+1}}$$
	
	Adding a bounded value to the top or bottom of this fraction will not affect the limit:
	
	$$= \lim_{i \to \infty} \frac{\log \frac{1}{M} + \log |L|_{n_{i+1}}}{n_{i+1} + (n'_i - n_{i+1})}$$
	$$= \lim_{i \to \infty} \frac{\log \left( \frac{1}{M} |L|_{n_{i+1}} \right)}{n'_i}$$
	$$\leq \lim_{i \to \infty} \frac{\log |L|_{n'_i}}{n'_i}$$
	$$\leq \lim_{i \to \infty} \frac{\log \left( M |L|_{n_i} \right)}{n'_i}$$
	$$= \lim_{i \to \infty} \frac{\log M + \log |L|_{n_i}}{n_i + (n'_i - n_i)}$$
	$$= \lim_{i \to \infty} \frac{\log |L|_{n_i}}{n_i} = \log \rho_1.$$
	
	Therefore, $\lim_{i \to \infty} \frac{\log |L|_{n'_i}}{n'_i} = \log \rho_1$. 
	This is independent of the choice of each $n'_i$ following the constraints, so we can cover the sequence $\frac{\log |L|_{n}}{n}$ by subsequences converging to $\log \rho_1$. Therefore, the entire sequence must also converge to this limit.
\end{proof}

\begin{cor}
\label{language_entropy_prefix_limit_summation}
    Let $L$ be a regular language closed under prefixes. Then $\lim_{n \to \infty} \frac{\log |L|_{\leq n}}{n}$ exists.
\end{cor}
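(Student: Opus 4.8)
The plan is to combine Lemma~\ref{language_entropy_prefix_limit} with Fact~\ref{language_entropy_summation}. Since $L$ is closed under prefixes we have $L = L^{pre}$, so Fact~\ref{language_entropy_summation} immediately gives
$$\limsup_{n\to\infty} \frac{\log |L|_{\leq n}}{n} = h(L).$$
Thus the limit superior is already pinned down, and the only thing left to verify is that the limit inferior is at least $h(L)$; once we have that, the two coincide and the limit exists (equal to $h(L)$).

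For the lower bound I would use the trivial inequality $|L|_{\leq n} \geq |L|_n$, valid for every $n$, which yields $\liminf_{n\to\infty} \tfrac{\log |L|_{\leq n}}{n} \geq \liminf_{n\to\infty} \tfrac{\log |L|_n}{n}$. By Lemma~\ref{language_entropy_prefix_limit} the right-hand side is in fact a genuine limit; since $L$ is infinite and closed under prefixes it contains a string of every length, so $|L|_n \geq 1$ for all $n$, the sequence $(|L|_n)_n$ is eligible for the entropy definition, and this limit equals $h(L)$. Chaining the inequalities gives $h(L) \leq \liminf_n \tfrac{\log |L|_{\leq n}}{n} \leq \limsup_n \tfrac{\log |L|_{\leq n}}{n} = h(L)$, so the limit exists and equals $h(L)$. (If $L$ is finite the statement is vacuous under the standing convention that entropy is defined only for infinite languages; one may also note directly that $|L|_{\leq n}$ is eventually constant, so the quotient tends to $0$.)

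An alternative route avoids invoking Fact~\ref{language_entropy_summation} altogether: pick a fresh symbol $c \notin \Sigma$ and set $L' = L\{c\}^*$ over $\Sigma \cup \{c\}$. One checks that $L'$ is regular and closed under prefixes, and that $|L'|_n = |L|_{\leq n}$ for every $n$, since the length-$n$ strings of $L'$ are exactly the words $w\,c^{\,n-|w|}$ with $w \in L$ and $|w| \leq n$. Applying Lemma~\ref{language_entropy_prefix_limit} to $L'$ then produces $\lim_n \tfrac{\log |L'|_n}{n} = \lim_n \tfrac{\log |L|_{\leq n}}{n}$ directly. Either way, there is no genuine obstacle here beyond what was already handled in Lemma~\ref{language_entropy_prefix_limit}: the substantive content — that prefix-closed regular languages have tame exponential growth — lives in that lemma, and the corollary is a short bookkeeping consequence, the only delicate point being the degenerate empty/finite cases.
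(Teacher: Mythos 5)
Your proposal is correct, and your ``alternative route'' is in fact exactly the paper's proof: the authors adjoin a fresh padding symbol, form $L' = L\$^*$, observe that $L'$ is regular and prefix-closed with $|L'|_n = |L|_{\leq n}$, and apply Lemma~\ref{language_entropy_prefix_limit}. Your primary route --- pinning down the limit superior via Fact~\ref{language_entropy_summation} and the limit inferior via the trivial bound $|L|_{\leq n} \geq |L|_n$ together with the limit guaranteed by Lemma~\ref{language_entropy_prefix_limit} --- is an equally valid squeeze argument; it buys nothing extra here but does avoid constructing an auxiliary language, at the cost of invoking the external fact from Staiger. Your handling of the degenerate finite case (where the entropy is undefined under the paper's convention and the quotient tends to $0$) is a point the paper passes over silently, so including it does no harm.
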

\begin{proof}
    With $\Sigma$ the alphabet of $L$, assume without loss of generality that $\$ \notin \Sigma$. Let $\Sigma' = \Sigma \cup \{\$\}$ and $L' = L\$^*$. 
    Then $L'$ is prefix-closed, and there is a one-to-one correspondence between strings of length at most $n$ in $L$ and strings of length exactly $n$ in $L'$ (realized by adding the appropriate number of $\$$). 
    The result then immediately follows from Lemma \ref{language_entropy_prefix_limit}.
\end{proof}

\subsection{Entropy and box-counting dimension}

Note that box-counting dimension appears to be quite hard to compute by the definition we have given, because there are fairly general quantifications over multiple variables. 
But with a bit of work, the process of proving the box-counting dimension of a set can be simplified. 
In particular, we have the following result showing that we do not need to check many values of $\epsilon$ and $h$:
	
\begin{lem}
\label{box_counting_simpler_limit}
	Let $X \subseteq [0, 1]^d$. Let $r > 1$, and assume that the limit:
	
	$$L = \lim_{n \to \infty} \frac{\log N(X, r^{-n}, \vec{0})}{\log r^n}$$
	
	exists. Then $d_B(X) = L$.
\end{lem}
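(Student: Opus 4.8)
The plan is to sandwich the full $\limsup$ and $\liminf$ in the definition of box-counting dimension between subsequential limits along powers of $r$, using monotonicity of $N(X,\cdot)$ in $\epsilon$ together with a comparison that controls how $N(X,\epsilon)$ changes when $\epsilon$ is moved within a bounded multiplicative factor. First I would record the two basic facts about grid-box counts: (a) $N(X,\epsilon)$ is non-increasing in $\epsilon$ (a coarser grid needs no more boxes) — more precisely, if $\epsilon \le \epsilon'$ then $N(X,\epsilon') \le N(X,\epsilon)$; and (b) if $\epsilon' \le \epsilon \le c\,\epsilon'$ for a constant $c$, then $N(X,\epsilon') \le C_{d,c}\, N(X,\epsilon)$, where $C_{d,c}$ depends only on $d$ and $c$ — this is because each box of side $\epsilon$ meets at most $(\lceil c \rceil + 1)^d$ boxes of side $\epsilon'$ from the finer grid, so a cover by $N(X,\epsilon)$ boxes of side $\epsilon$ refines to a cover by at most $(\lceil c\rceil+1)^d N(X,\epsilon)$ boxes of side $\epsilon'$. (There is a minor subtlety that the Lemma's $N(X,r^{-n},\vec 0)$ notation suggests a version of $N$ with a grid-offset parameter, but the offset only changes counts by the same bounded multiplicative factor, so it may be absorbed into the same estimates; I would note this at the outset.)

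Next, given an arbitrary $\epsilon \in (0,1)$, choose the integer $n = n(\epsilon)$ with $r^{-(n+1)} \le \epsilon < r^{-n}$, so that $n \to \infty$ as $\epsilon \to 0$ and $n \le \log\frac1\epsilon / \log r \le n+1$. By (a) and (b),
\[
  N(X, r^{-n}) \;\le\; N(X,\epsilon) \;\le\; N(X, r^{-(n+1)}) \;\le\; C\, N(X, r^{-n}),
\]
with $C = C_{d,r}$ the constant from (b) applied to the pair $r^{-(n+1)} \le r^{-n} = r\cdot r^{-(n+1)}$. Taking logarithms and dividing by $\log\frac1\epsilon$, and using $n\log r \le \log\frac1\epsilon \le (n+1)\log r$, both the upper and lower bounds for $\dfrac{\log N(X,\epsilon)}{\log\frac1\epsilon}$ become $\dfrac{\log N(X,r^{-n})}{n\log r}$ plus terms that vanish as $n\to\infty$ (the $\log C$ contributes $O(1/n)$, and replacing $\log\frac1\epsilon$ by $n\log r$ vs. $(n+1)\log r$ multiplies by $1 + O(1/n)$ while $\frac{\log N(X,r^{-n})}{n\log r}$ stays bounded near $L$). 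Hence
\[
  \lim_{\epsilon\to 0}\frac{\log N(X,\epsilon)}{\log\frac1\epsilon}
  \;=\; \lim_{n\to\infty}\frac{\log N(X,r^{-n})}{\log r^{\,n}} \;=\; L,
\]
so in particular the $\limsup$ and $\liminf$ coincide and $d_B(X) = L$.

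The only real point requiring care — and the step I expect to be the main (mild) obstacle — is the uniform comparison estimate (b), i.e. showing $N$ only changes by a bounded factor under a bounded rescaling of $\epsilon$ (and, if the offset-parameter reading is correct, under a change of grid offset as well). Everything else is bookkeeping: once (a) and (b) are in hand, the squeeze above is routine, and the conclusion that equality of the two one-sided limits of $\frac{\log N(X,\epsilon)}{\log(1/\epsilon)}$ gives $d_B(X)=L$ is immediate from the definition of box-counting dimension. I would also remark that $X$ being a subset of $[0,1]^d$ guarantees boundedness and nonemptiness is the only degenerate case to exclude, so the quantity $L$ and $d_B(X)$ are both well-defined under the stated hypotheses.
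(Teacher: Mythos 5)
Your overall strategy is the same as the paper's: bracket an arbitrary $\epsilon$ between consecutive powers $r^{-(n+1)} \le \epsilon < r^{-n}$, compare the corresponding grid counts up to a multiplicative constant depending only on $d$ (and $r$), and observe that the constant and the $\pm 1$ shift in the exponent wash out after taking logarithms and dividing by $\log(1/\epsilon)$. Your reading of the third argument of $N$ as a grid offset, absorbed into the same bounded-factor estimates, is also exactly what the paper does.

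There is, however, one genuinely false step: your claim (a), that $N(X,\epsilon)$ is non-increasing in $\epsilon$, fails for grid-aligned box counts, and you invoke it in the first two inequalities of your chain. Enlarging $\epsilon$ can increase the count: take $d=1$ and $X=\{0.49,\,0.51\}$; then $N(X,0.03)=1$, since both points lie in the single box $[0.48,0.51]$, while $N(X,0.5)=2$, since the points lie in the interiors of the two distinct boxes $[0,0.5]$ and $[0.5,1]$. (Monotonicity does hold for the ``minimal number of sets of diameter at most $\epsilon$'' variant, but not for the grid variant used here.) The repair is simply to use your estimate (b) in both directions rather than relying on monotonicity for one of them: since the boxes of either grid tile $\R^d$, each box of the smaller mesh meets at most $2^d$ boxes of the larger mesh, yielding $N(X,r^{-n},\vec{0}) \le 2^d N(X,\epsilon,\vec{h})$ and $N(X,\epsilon,\vec{h}) \le 2^d N(X,r^{-(n+1)},\vec{0})$ --- which is precisely the pair of inequalities the paper establishes. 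After this correction the extra factors of $2^d$ contribute only $O(1/n)$ to the logarithmic quotient, and your squeeze goes through unchanged.
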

\begin{proof}
	Let $(\epsilon_i)_i$ be any sequence of positive values converging to zero, and let $(\vec{h}_i)_i$ be any sequence of values in $[0, 1)^d$. We will show that:
	
	$$\lim_{i \to \infty} \frac{\log N(X, \epsilon_i, \vec{h}_i)}{\log \frac{1}{\epsilon_i}} = L.$$
	
	Choose some arbitrary $i$, and find $n_i$ such that $r^{-n_i} > \epsilon_i > r^{-n_i - 1}$. 
	Note first that $n_i \to \infty$ as $i \to \infty$; otherwise, there would exist $n^*$ such that infinitely many $n_i < n^*$, and hence infinitely many $\epsilon_i > r^{-n^* - 1}$, which cannot be.
	
	Consider the boxes of the form $I_{\vec{z}} = [z_1 r^{-n_i}, (z_1 + 1) r^{-n_i}] \times \dots \times [z_d r^{-n_i}, (z_d + 1) r^{-n_i}]$ where $\vec{z} = (z_1, \dots, z_d)$ are integers. 
	Similarly, choose an arbitrary vector of integers $\vec{z}' = (z'_1, \dots, z'_d)$ and let $I' = [(z'_1 - h_{i,1}) \epsilon_i, (z'_1 - h_{i,1} + 1) \epsilon_i] \times \dots \times [(z'_d - h_{i,d}) \epsilon_i, (z'_d - h_{i,d} + 1) \epsilon_i]$. 
	Note that because the $I_{\vec{z}}$ are adjacent (i.e. they partition $\mathbb{R}^d$) and longer than $I'$, then there exist at most $2^d$ such $I_{\vec{z}}$ that cover $I'$. 
	We chose $I'$ arbitrarily, so therefore, $N(S, r^{-n_i}, 0) \leq 2^d N(S, \epsilon_i, \vec{h}_i)$.
	
	Similarly, choose an arbitrary $\vec{z}$ and let $I = [z_1 r^{-n_i - 1}, (z_1 + 1) r^{-n_i - 1}] \times \dots \times [z_d r^{-n_i - 1}, (z_d + 1) r^{-n_i - 1}]$, and consider the intervals of the form $I_{\vec{z}'} = [(z'_1 - h_{i,1}) \epsilon_i, (z'_1 - h_{i,1} + 1) \epsilon_i] \times \dots \times [(z'_d - h_{i,d}) \epsilon_i, (z'_d - h_{i,d} + 1) \epsilon_i]$ for integer vectors $\vec{z}'$. 
	Note that because the $I_{\vec{z}'}$ are adjacent (i.e. they partition $\mathbb{R}$) and longer than $I$, then there exist at most $2^d$ such $I_{\vec{z}'}$ that cover $I$. 
	We chose $I$ arbitrarily, so therefore, $N(X, \epsilon_i, \vec{h}_i) \leq 2^d N(X, r^{-n_i-1}, 0)$.
	
	These inequalities give us:
	
	$$\frac{\log \frac{1}{2^d} N(X, r^{-n_i}, 0)}{\log r^{n_i + 1}} \leq \frac{\log N(X, \epsilon_i, \vec{h}_i)}{\log \frac{1}{\epsilon_i}} \leq \frac{\log 2^d N(X, r^{-n_i-1}, 0)}{\log r^{n_i}}.$$
	
	Apply laws of logarithms:
	
	$$\frac{- \log 2^d + \log N(X, r^{-n_i}, 0)}{\log r + \log r^{n_i}} \leq \frac{\log N(X, \epsilon_i, \vec{h}_i)}{\log \frac{1}{\epsilon_i}} \leq \frac{\log 2^d + \log N(X, r^{-n_i-1}, 0)}{-\log r + \log r^{n_i + 1}}.$$
	
	Now note that as long as $a_i, b_i \to \infty$ with $C_1, C_2$ constant, we have $\lim_{i \to \infty} \frac{C_1 + a_i}{C_2 + b_i} = \lim_{i \to \infty} \frac{a_i}{b_i}$. So:
	
	$$\lim_{i \to \infty} \frac{- \log 2^d + \log N(X, r^{-n_i}, 0)}{\log r + \log r^{n_i}} = \lim_{i \to \infty} \frac{\log N(X, r^{-n_i}, 0)}{\log r^{n_i}} = L.$$
	
	Similarly:
	
	$$\lim_{i \to \infty} \frac{\log 2^d + \log N(X, r^{-n_i-1}, 0)}{-\log r + \log r^{n_i + 1}} = \lim_{i \to \infty} \frac{\log N(X, r^{-n_i-1}, 0)}{\log r^{n_i + 1}} = L.$$
	
	Since $\frac{\log N(X, \epsilon_i, \vec{h}_i)}{\log \frac{1}{\epsilon_i}}$ is bounded between two sequences converging to $L$, we conclude that its limit is $L$ as well.
\end{proof}

\begin{lem}
\label{box_counting_entropy}
    Let $L$ be an $\omega$-language of base-$k$ representations of points in $[0, 1]^d$. 
    Let $X = \nu_k(L)$. Then $d_B(X) = \frac{1}{\log k} h(L^{pre})$.
\end{lem}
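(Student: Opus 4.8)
The goal is to show $d_B(X) = \frac{1}{\log k} h(L^{pre})$ where $X = \nu_k(L)$. The strategy is to relate the box-counting quantities $N(X, k^{-n})$ to the prefix-counting quantities $|L^{pre}|_n$, and then invoke Lemma \ref{box_counting_simpler_limit} (with $r = k$) to reduce the box-counting dimension to a single limit along the sequence $\epsilon = k^{-n}$.

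The plan is as follows. First, I would make the key geometric observation that for a point $x \in [0,1]^d$ with base-$k$ representation $w = (w_1 w_2 \cdots)$, the length-$n$ prefix $(w_1, \dots, w_n)$ determines which grid box of side length $k^{-n}$ (from the standard grid anchored at $\vec 0$) the point $x$ lies in — up to the boundary ambiguity coming from $k$-rationals having two representations. More precisely, if $w, w' \in L^{pre}_n$ yield the same $k$-adic box then either they are equal or they differ by $\pm 1$ in a coordinate in a way forced by carrying; in any case the number of distinct length-$n$ prefixes of $L$ and the number of $k^{-n}$-boxes meeting $X$ differ by a bounded multiplicative factor (something like at most $2^d$ in each direction, since each box corresponds to at most a bounded number of prefixes and vice versa). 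So I would establish the two-sided bound
$$c_1 \, |L^{pre}|_n \leq N(X, k^{-n}, \vec 0) \leq c_2 \, |L^{pre}|_n$$
for constants $c_1, c_2 > 0$ independent of $n$. One direction is essentially immediate: the boxes indexed by prefixes in $L^{pre}_n$ cover $X$, so $N(X, k^{-n}, \vec 0) \leq |L^{pre}|_n$. For the other direction, every box meeting $X$ contains a point of $X$, whose representation gives a prefix in $L^{pre}_n$ mapping (via $\nu_k$ on the cylinder) into that box or an adjacent one, and the overcounting from the at-most-two-representations phenomenon is bounded by $2^d$.

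Next, taking logarithms, dividing by $\log k^n = n \log k$, and using the fact that additive constants in numerator and denominator wash out in the limit (exactly as in the proof of Lemma \ref{box_counting_simpler_limit}), I get
$$\lim_{n \to \infty} \frac{\log N(X, k^{-n}, \vec 0)}{\log k^n} = \frac{1}{\log k} \lim_{n \to \infty} \frac{\log |L^{pre}|_n}{n},$$
provided the right-hand limit exists. But $L^{pre}$ is a regular language closed under prefixes (it is the prefix-closure of the regular language $L$), so by Lemma \ref{language_entropy_prefix_limit} the limit $\lim_{n\to\infty} \frac{\log |L^{pre}|_n}{n}$ does exist, and by Fact \ref{language_entropy_summation} (and Corollary \ref{language_entropy_prefix_limit_summation}) it equals $h(L^{pre})$. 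Feeding the existence of this limit into Lemma \ref{box_counting_simpler_limit} yields $d_B(X) = L = \frac{1}{\log k} h(L^{pre})$.

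The main obstacle is the careful bookkeeping in the geometric comparison between prefixes and grid boxes — handling the boundary case where a point has two base-$k$ representations, and making sure the grid used in the definition of $N(X, \cdot)$ (boxes $[z_i\epsilon, (z_i+1)\epsilon]$) is compatible with the $k$-adic cylinder decomposition. Since the grid boxes are \emph{closed}, a single point of $X$ on a shared face can be counted in up to $2^d$ boxes, but this only affects the count by a bounded factor and hence not the dimension; I would state this explicitly but not belabor it. Everything else is routine limit manipulation already packaged in the preceding lemmas.
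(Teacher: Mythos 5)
Your proposal is correct and follows essentially the same route as the paper: a two-sided comparison between $N(X, k^{-n}, \vec{0})$ and $|L^{pre}|_n$ up to a dimension-dependent multiplicative constant (the paper uses $3^d$ where you estimate $2^d$, but the exact constant is immaterial), followed by Lemma \ref{box_counting_simpler_limit} and Lemma \ref{language_entropy_prefix_limit} to convert the limsup into a genuine limit equal to $\frac{1}{\log k} h(L^{pre})$. No substantive differences.
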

\begin{proof}
    Lemma \ref{language_entropy_prefix_limit} tells us that the entropy of $L^{pre}$ is defined as a limit and not just as a limit superior. 
    By Lemma \ref{box_counting_simpler_limit}, it then suffices to show:
    
    $$\lim_{n \to \infty} \frac{\log N(X, k^{-n}, \vec{0})}{\log k^n} = \frac{1}{\log k} \lim_{n \to \infty} \frac{\log |L^{pre}|_n}{n}.$$
    
    Now consider each string $w$ in $L^{pre}$ of length $n$. 
    The infinite strings with this prefix represent precisely the numbers in $I_{\vec{z}} = [z_1 k^{-n}, (z_1 + 1) k^{-n}] \times \dots \times [z_d k^{-n}, (z_d + 1) k^{-n}]$ where $\vec{z} = (z_1, \dots, z_d)$ is the integer vector with base-$k$ representation given by $w$. 
    So all of these boxes for all such strings $w$ will cover $X$; thus, $N(X, k^{-n}, \vec{0}) \leq |L^{pre}|_n$.
    
    This covering may not be optimal, because the boxes $I_{\vec{z}}$ are not disjoint; they overlap at points with at least one coordinate that is $k$-rational. 
    So a single $I_{\vec{z}}$ may contain points that the above covering method would place in any adjacent box. 
    However, this is the only overlap, so a box in the optimal covering corresponds to at most $3^d$ boxes in the above covering. 
    Hence $N(X, k^{-n}, \vec{0}) \geq \frac{1}{3^d} |L^{pre}|_n$. 
    Therefore:
    
    $$\frac{1}{\log k} \lim_{n \to \infty} \frac{\log |L^{pre}|_n}{n}$$
    $$= \frac{1}{\log k} \lim_{n \to \infty} \frac{\log \frac{1}{3^d} + \log |L^{pre}|_n}{n}$$
    $$= \frac{1}{\log k} \lim_{n \to \infty} \frac{\log \left(\frac{1}{3^d} |L^{pre}|_n\right)}{n}$$
    $$\leq \frac{1}{\log k} \lim_{n \to \infty} \frac{\log N(X, k^{-n}, \vec{0})}{n}$$
    $$\leq \frac{1}{\log k} \lim_{n \to \infty} \frac{\log |L^{pre}|_n}{n}.$$
    
    It follows that the two limits are equal, as required.
\end{proof}

We conclude this section by noting that the choice of regular language to associate with a $k$-automatic set is not always obvious. 
In particular, in their conjecture regarding the connection between entropy and dimension, Adamczewski and Bell \cite{AB11} use the regular language of \textit{substrings} of the base-$k$ expansions, not the language of prefixes as we are using here. 
We will show that this does not make a difference.

This result follows from \cite{S85}, in which the author shows that the entropy of a $k$-regular $\omega$-language $L$ is equal to that of the (normal) regular language $L^{pre}$.
Theorem 14 of \cite{S85} further shows that if $L$ is an $\omega$-language then there is a strongly-connected $\omega$-language $L'$ and a word $w \in \Sigma$ such that $\{v: v=w\ell, \ell \in L'\} \subseteq L$, and $h(L)=h(L')$.
It follows that the entropy of $L$ is equal to the maximum over the entropies of the subsets corresponding to the strongly connected components of any B\"uchi automaton $\calA$ recognizing $L$, which together implies the entropy of $L$ is the same as that of the set of substrings of $L$.

\begin{fact}[\cite{S85}]
\label{entropy_pre_sub}
    Let $L$ be a regular language closed under prefixes, and let $L^{sub}$ be the regular language of substrings of strings in $L$. Then $h(L) = h(L^{sub})$.
\end{fact}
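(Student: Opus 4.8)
The plan is to establish the two inequalities $h(L) \le h(L^{sub})$ and $h(L^{sub}) \le h(L)$ separately. The first is immediate: every string in $L$ is a substring of itself, so $L \subseteq L^{sub}$, and since $L$ is infinite (otherwise the entropy is undefined) so is $L^{sub}$; Fact~\ref{language_entropy_union}(i) then yields $h(L) \le h(L^{sub})$. All of the content lies in the reverse inequality.

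For $h(L^{sub}) \le h(L)$, I would first put $L$ into a convenient automaton form. Fix a deterministic finite automaton $\calA = (Q, \Sigma, \delta, \{q_0\}, F)$ recognizing $L$. Because $L$ is prefix-closed, a string outside $L$ has no extension in $L$, so the non-accepting states of $\calA$ form a trap: from a non-accepting state one reaches only non-accepting states. Discarding that trap, and then also discarding any state not reachable from $q_0$, produces a trimmed (partial) automaton $\calA'$ with state set $Q'$ in which every state is reachable and accepting, and such that $w \in L$ precisely when the unique run of $w$ from $q_0$ stays within $Q'$. For $q \in Q'$, let $L_q$ denote the (prefix-closed) language of strings labeling a path of $\calA'$ starting at $q$, so that $L = L_{q_0}$ and $|L_q|_n$ counts the length-$n$ paths out of $q$.

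Next I would read $L^{sub}$ off of $\calA'$. Since $L$ is prefix-closed, $w \in L^{sub}$ iff $uw \in L$ for some $u$, which holds iff $w$ labels a path starting at some state reachable from $q_0$; as every state of $\calA'$ is reachable, this gives $L^{sub} = \bigcup_{q \in Q'} L_q$. Hence $|L^{sub}|_n \le \sum_{q \in Q'} |L_q|_n \le |Q'| \cdot \max_{q \in Q'} |L_q|_n$, and passing to the limit superior — discarding the finitely many $q$ for which $L_q$ is finite (these contribute $0$ for large $n$) and using Fact~\ref{language_entropy_union} — yields $h(L^{sub}) \le \max_q h(L_q)$, the maximum over those $q$ with $L_q$ infinite. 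It then remains to bound $h(L_q) \le h(L)$ for each such $q$: choosing a string $u_q$ of some length $\ell_q$ labeling a run from $q_0$ to $q$ in $\calA'$, the map $w \mapsto u_q w$ injects length-$n$ paths out of $q$ into length-$(n+\ell_q)$ strings of $L$, so $|L_q|_n \le |L|_{n+\ell_q}$. Since $1 \le |L|_m \le |\Sigma|^m$ for all $m$ (the lower bound again using prefix-closedness), shifting the index by the constant $\ell_q$ does not change the limit superior, so $h(L_q) = \limsup_n \frac{\log |L_q|_n}{n} \le \limsup_m \frac{\log |L|_m}{m} = h(L)$, which combined with the previous bound gives $h(L^{sub}) \le h(L)$.

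I expect the main obstacle to be the bookkeeping around prefix-closedness, namely arranging $\calA'$ so that every surviving state is both reachable and accepting — so that $L$ is exactly the set of path labels out of $q_0$ and, correspondingly, $L^{sub}$ is exactly $\bigcup_{q \in Q'} L_q$; once these identifications are in place, everything else is routine manipulation of limits superior. An alternative route, following the remark preceding the statement, would instead reduce both $h(L)$ and $h(L^{sub})$ to a maximum of entropies over the strongly connected components of a recognizing automaton via Staiger's Theorem~14 of \cite{S85}, but the automaton-surgery argument above is more self-contained given what has already been developed here.
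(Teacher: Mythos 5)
Your proof is correct, but it takes a genuinely different route from the paper. The paper does not prove this fact from scratch: it treats it as a consequence of Staiger's results, in particular Theorem 14 of \cite{S85}, which lets one reduce the entropy of a regular ($\omega$-)language to the maximum of the entropies attached to the strongly connected components of a recognizing automaton; since substrings of $L$ correspond to paths starting anywhere in the automaton, both $h(L)$ and $h(L^{sub})$ collapse to the same maximum over components. Your argument instead works directly with a trimmed DFA: you use prefix-closedness to discard the non-accepting trap, identify $L^{sub}$ with $\bigcup_{q} L_q$ (the path languages out of each surviving state), and then bound $|L_q|_n \leq |L|_{n+\ell_q}$ by prepending an access string $u_q$, so that a constant index shift (harmless for the limit superior, given $1 \leq |L|_m \leq |\Sigma|^m$) yields $h(L_q) \leq h(L)$. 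Each step checks out: the trap argument is valid for reachable states, the identification $w \in L^{sub} \iff \exists u\, (uw \in L)$ uses prefix-closedness correctly in both directions, and the finite union of the $L_q$ interacts with $\limsup$ as you claim. What your approach buys is self-containedness and elementary bookkeeping --- it avoids invoking the structure theory of strongly connected $\omega$-languages entirely; what the paper's approach buys is brevity and consistency with the component-wise viewpoint it uses throughout Sections \ref{closed} and \ref{dimensions}. Either is acceptable; yours would serve as a complete standalone proof where the paper only offers a citation.
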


\section{The closed case}\label{closed}

\subsection{Spectral radius and box-counting dimension}

The goal of this subsection will be to produce a method for computing the box-counting dimension of a closed $k$-automatic set based on its B\"uchi automaton. 
We will do this by first computing the entropy of the corresponding language of prefixes and then applying Lemma \ref{box_counting_entropy}.

\begin{defn}\label{adjmat}
Let $\mathcal{A}$ be a B\"uchi automaton. 
Suppose that $Q$, the set of states of $\calA$, is size $n$ and let $\iota:\{1,\ldots,n\} \to Q$ be a fixed bijection.
Then we associate to $\calA$ a weighted adjacency matrix $\mathbf{M}(\calA,s)=(m_{i,j})$ given by:
\[
m_{i,j} = \left (\frac{|\{\sigma \in \Sigma: (\iota(i),\iota(j),\sigma) \in \Delta\}|}{ k}\right)^{s}.
\]
Recall that $\Sigma$ is the alphabet and $\Delta$ is the transition function of $\calA$.
\end{defn}

We will need the following fact regarding sequences defined by repeated matrix multiplication. 
The following result appears in \cite{S85}:

\begin{fact}
\label{language_radius_entropy}
    Let $L$ be a regular language closed under prefixes. Let $\mathcal{A}$ be a finite automaton recognizing $L$, and assume that $\mathcal{A}$ is deterministic, trim, and has every state accepting. 
    Label the states of $\mathcal{A}$ by numbers $1$ through $m$.
    Then the entropy of $L$ is equal to the spectral radius $\sprad{k\mathbf{M}(\calA,1)}$ of weighted adjacency matrix $\mathbf{M}(\calA,1)$ scaled by the constant $k$.
\end{fact}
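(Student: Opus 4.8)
The plan is to reduce the statement to Lemma~\ref{language_entropy_prefix_limit} together with the explicit form of $|L|_n$ coming from the transfer-matrix formalism. Since $\mathcal{A}$ is deterministic, trim, and has every state accepting, a string $w$ of length $n$ lies in $L$ precisely when there is a (necessarily unique) run of $\mathcal{A}$ on $w$ from the start state, and every such run is accepting. Hence if $N_{n,i}$ denotes the number of length-$n$ strings whose run ends at state $i$, then $|L|_n = \sum_i N_{n,i}$, and the vector $\vec{v}_n = (N_{n,1}, \dots, N_{n,m})^T$ satisfies $\vec{v}_{n+1} = \mathbf{T} \vec{v}_n$, where $\mathbf{T} = k\mathbf{M}(\mathcal{A},1)$ is exactly the integer transition-count matrix $t_{i,j} = |\{\sigma \in \Sigma : \iota(j) \in \delta(\iota(i), \sigma)\}|$ (transposed as needed). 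So $\vec{v}_n = \mathbf{T}^n \vec{v}_0$ with $\vec{v}_0$ the indicator of the start state, and $|L|_n = \mathbf{1}^T \mathbf{T}^n \vec{v}_0$.

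Next I would extract the asymptotics of $|L|_n$ from the Jordan form of $\mathbf{T}$, exactly as in the proof of Lemma~\ref{language_entropy_prefix_limit}: write $|L|_n = \sum_i p_i(n)\lambda_i^n$ with $p_i$ polynomials and $\lambda_i$ the eigenvalues of $\mathbf{T}$, where the $\lambda_i$ actually appearing with nonzero coefficient are those not killed by the projections onto $\vec{v}_0$ and $\mathbf{1}$. The entropy $h(L) = \lim_n \frac{\log|L|_n}{n}$ (this limit exists by Lemma~\ref{language_entropy_prefix_limit}) is then $\log\rho$, where $\rho = \max\{|\lambda_i| : \lambda_i \text{ appears}\}$. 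It remains to identify $\rho$ with $\sprad(\mathbf{T})$, i.e.\ to show that the dominant eigenvalue of $\mathbf{T}$ in modulus is not cancelled out. Because $\mathcal{A}$ is trim, every state is reachable from the start state and can reach an accept state; since all states are accepting, trimness just says every state is reachable from the start. The matrix $\mathbf{T}$ is nonnegative, so by Perron--Frobenius its spectral radius $\rho_0 = \sprad(\mathbf{T})$ is itself an eigenvalue with a nonnegative eigenvector. Restricting to the states reachable from the start (all of them, by trimness) and using nonnegativity of all entries of $\mathbf{T}^n$, $\vec{v}_0$, and $\mathbf{1}$, one gets $|L|_n = \mathbf{1}^T\mathbf{T}^n\vec{v}_0 \geq c \cdot (\text{entry of }\mathbf{T}^n)$ for a suitable entry, and a standard Perron--Frobenius argument (passing to a strongly connected component achieving the spectral radius, which is reachable from the start and whose states all feed back into themselves) shows this grows like $n^t \rho_0^n$ for some $t \geq 0$. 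Hence $\rho = \rho_0 = \sprad(\mathbf{T}) = \sprad(k\mathbf{M}(\mathcal{A},1))$, giving $h(L) = \log \sprad(k\mathbf{M}(\mathcal{A},1))$.

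The main obstacle is the last identification: a priori the leading-modulus eigenvalue of $\mathbf{T}$ could fail to contribute to $|L|_n$ if the corresponding eigenvector were orthogonal to $\vec{v}_0$ or to $\mathbf{1}$, or if there were cancellation among several eigenvalues of the same modulus. The deterministic-trim-all-accepting hypotheses are exactly what rules this out: all-accepting makes $\mathbf{1}$ the all-ones vector (strictly positive, so it cannot be orthogonal to the nonnegative Perron eigenvector of any s.c.c.), and trimness makes $\vec{v}_0$ reach every state, so the projection of $\mathbf{T}^n\vec{v}_0$ onto the top s.c.c.\ is eventually a positive multiple of its Perron eigenvector; combined with nonnegativity everywhere, no cancellation can occur and the growth rate is genuinely $\rho_0^n$ (up to a polynomial factor). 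I would phrase this via the standard fact that for a nonnegative matrix $\mathbf{T}$, $\lim_n \frac{1}{n}\log(\mathbf{1}^T\mathbf{T}^n\vec{v}_0) = \log\sprad(\mathbf{T})$ whenever $\vec{v}_0$ has a positive entry at some state from which a spectral-radius-attaining s.c.c.\ is reachable and $\mathbf{1}$ is strictly positive — and then just cite trimness and all-acceptingness to check the hypothesis. Alternatively, one can simply invoke this as the cited result of Staiger~\cite{S85} and present the matrix setup as a restatement.
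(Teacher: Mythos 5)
Your proposal is correct, but note that the paper does not actually prove this statement: it is labeled a Fact and attributed to Staiger \cite{S85}, so there is no in-paper argument to compare against. What you have written is a legitimate self-contained derivation, and it is the natural one: it reuses verbatim the transfer-matrix setup ($N_{n,i}$, $t_{i,j}$, $\vec{v}_{n+1}=\mathbf{T}\vec{v}_n$, $|L|_n=\sum_i p_i(n)\lambda_i^n$) that the paper itself builds in the proof of Lemma~\ref{language_entropy_prefix_limit}, and then adds the one missing ingredient, namely the identification of the growth rate of $|L|_n$ with $\sprad(\mathbf{T})$ rather than with the modulus of some possibly cancelled eigenvalue. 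Your diagnosis of where the hypotheses enter is the right one: nonnegativity of $\mathbf{T}$, positivity of $\mathbf{1}$ (all states accepting), and reachability of every state from the start state (trimness, given that all states are accepting) together force the row sums $\mathbf{1}^T\mathbf{T}^n\vec{v}_0$ to grow at rate $\sprad(\mathbf{T})$; the cleanest way to finish is the bound $(\mathbf{T}^{n_i+n})_{s,j}\geq(\mathbf{T}^{n})_{i,j}$ for each state $i$ reachable in $n_i$ steps, which lower-bounds the row-$s$ sums by the full matrix sums up to a bounded time shift, combined with Gelfand's formula; your phrase ``eventually a positive multiple of its Perron eigenvector'' is slightly too strong when the dominant component is periodic, but the limit of $\frac{1}{n}\log$ is unaffected and the existence of the limit is already supplied by Lemma~\ref{language_entropy_prefix_limit}. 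One point worth flagging: as stated in the paper the Fact says the entropy \emph{equals} the spectral radius, whereas what is true (and what you prove, and what the later application in Theorem~\ref{closed_equality} actually uses) is that the entropy equals the \emph{logarithm} of $\sprad(k\mathbf{M}(\calA,1))$; the paper's statement is missing the logarithm.
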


We know a lot about the dimension of $V_k(\mathcal{A})$ when it is a closed set, and we know that if $\mathcal{A}$ is deterministic (and trim), then we can take the closure of $V_k(\mathcal{A})$ by making all states accepting. 
It is not immediately obvious that this will hold when $\mathcal{A}$ is nondeterministic, but in the following lemma we show this is case.
The following result is essentially a corollary of Lemma 58 in \cite{CLR15}:

\begin{lem}
\label{automaton_closure}
    Let $\mathcal{A}$ be a trim B\"uchi automaton. Then $V_k(\overline{\calA}) = \overline{V_k(\mathcal{A})}$.
\end{lem}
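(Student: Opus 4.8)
The plan is to establish the two inclusions separately, with $\overline{V_k(\calA)} \subseteq V_k(\overline{\calA})$ being essentially formal and $V_k(\overline{\calA}) \subseteq \overline{V_k(\calA)}$ carrying the real content.

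For $\overline{V_k(\calA)} \subseteq V_k(\overline{\calA})$, I would first observe that $\overline{\calA}$ is again a closed trim B\"uchi automaton: trimness of $\calA$ supplies, for each state $q$, a path from a start state to $q$ together with a nonzero-length path from $q$ to some accept state of $\calA$; the latter is still an accept state of $\overline{\calA}$, in which moreover every state is accepting. The result of \cite{CLR15} cited above then tells us that $\overline{\calA}$ recognizes a topologically closed $\omega$-language $\overline{L} \subseteq [k]^\omega$; since $[k]^\omega$ is compact and $\nu_k$ is continuous, $V_k(\overline{\calA}) = \nu_k(\overline{L})$ is compact and hence closed in $[0,1]$. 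As any accepting run of $\calA$ is in particular an accepting run of $\overline{\calA}$, we get $V_k(\calA) \subseteq V_k(\overline{\calA})$, and passing to closures gives the inclusion.

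For the reverse inclusion, fix $x \in V_k(\overline{\calA})$ and $\epsilon > 0$; the goal is to produce a point of $V_k(\calA)$ within $\epsilon$ of $x$. Pick a $k$-representation $w = w_1 w_2 \cdots$ of $x$ and an infinite run $q_0, q_1, \dots$ of $w$ in $\calA$ with $q_0 \in S$ (automatically accepting once every state is an accept state). Choose $n$ with $k^{-n} < \epsilon$ and consider the state $q_n$. Using trimness repeatedly, I would build an infinite path from $q_n$: first follow a nonzero-length path to an accept state $f_1$, then from $f_1$ a nonzero-length path to an accept state $f_2$, and so on; this path meets $F$ infinitely often, so concatenating it after $q_0, \dots, q_n$ yields a genuine accepting run of $\calA$. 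Its label has the form $w_1 \cdots w_n u$, so $y := \nu_k(w_1 \cdots w_n u) \in V_k(\calA)$, and since $w$ and $w_1 \cdots w_n u$ agree on their first $n$ symbols, $x$ and $y$ lie in a common interval of length $k^{-n}$; hence $|x - y| \le k^{-n} < \epsilon$. As $\epsilon$ was arbitrary, $x \in \overline{V_k(\calA)}$.

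I expect the only subtle step to be the run-extension in the last paragraph: what is needed is not merely an infinite continuation of the run but one that revisits $F$ infinitely often, and this is exactly what the ``nonzero length'' clause in the definition of trim buys us — from any state we can always march on to a further accept state. Everything else is routine: the agreement-of-digits estimate for $\nu_k$ and the compactness/continuity argument for the closedness of $V_k(\overline{\calA})$. If one preferred to avoid invoking \cite{CLR15} for that closedness, a direct K\"onig's-lemma argument on finite prefixes of accepting runs would serve equally well, but the route above is shorter.
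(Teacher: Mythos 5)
Your proposal is correct and follows essentially the same route as the paper: both arguments combine (a) the closedness of $V_k(\overline{\calA})$, (b) the trivial inclusion $V_k(\calA)\subseteq V_k(\overline{\calA})$, and (c) the observation that trimness lets any finite prefix of a run in $\overline{\calA}$ be extended to an accepting run of $\calA$, giving $V_k(\overline{\calA})\subseteq\overline{V_k(\calA)}$. The only divergence is in step (a), where the paper gives a self-contained K\H{o}nig's-lemma argument on the graph of finite runs, while you invoke \cite{CLR15} together with compactness of $[k]^\omega$ and continuity of $\nu_k$ --- precisely the alternative you flag at the end --- and either treatment is adequate.
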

\begin{proof}
    First, we prove that $V_k(\overline{\calA})$ is closed. 
    Let $x \in \overline{V_k(\overline{\calA})}$. 
    Then there exist $(x_m)_{m \in \N} \in V_k(\overline{\calA})$ with $x_m \to x$. 
    Without loss of generality assume that either $x_m < x$ for all $m$, or else $x_m > x$ for all $m$. 
    We will assume the latter; the proof in the former case is analogous.
    
    Let $w$ be the infinite base-$k$ representation of $x$ such that if $x$ is $k$-rational, then $w$ ends in $0^\omega$; this uniquely identifies $w$. 
    (We choose this representation because we assumed $x_m > x$.) 
    Then for all $n$ there exists $m_n$ such that the first $n$ characters of some base-$k$ representation of $x_{m_n}$ are the first $n$ characters of $w$. Let $w_n$ be this base-$k$ representation for a given $n$.
    
    Now, we will define a graph $G$ inductively. 
    The vertex set will be a subset of $\Sigma^* \times Q^*$. 
    Our initial condition is that $G$ contains the vertices $(\epsilon, q)$, where $q$ ranges over $S$ (the set of start states of $\overline{\calA}$). Then for every vertex $(u, v)$, we require that $G$ contain the vertex $(uc, vq)$ where $uc$ is a prefix of $w$ and $\overline{\calA}$ transitions from the last state in $v$ to state $q$ on $c$; and we require that $G$ contain the edge from $(u, v)$ to $(uc, vq)$.
    
    Observe that:
    
    \begin{itemize}
        \item $G$ has finitely many connected components: as we have defined $G$, every vertex is connected to a vertex whose string is shorter. 
        So if the length of the string for a vertex is $n$, that vertex has a path of length $n$ to an initial vertex.
        \item $G$ is locally finite: every vertex has at most $k|Q|$ incident edges.
        \item $G$ is infinite: if $w_n$ is accepted, there must be an accepting path for the first $n$ characters of $w_n$, which are also the first $n$ characters of $w$. 
        This gives a vertex for every natural number $n$.
    \end{itemize}
    
    Therefore, we can apply K\H{o}nig's lemma to one of the connected components of $G$ and conclude that $G$ has an infinite path. 
    Moreover, note that because strings have finite length, infinitely many of the edges in the path must lead to a longer string; and because there is only one edge from any vertex to a shorter string, it in fact must be the case that eventually the path only contains edges to longer strings. 
    Choose a vertex sufficiently far into the path that this is the case; there is a path from an initial vertex to this vertex and then infinitely farther through longer and longer strings; and this path must correspond to an accepting path for $w$. 
    So $\overline{\calA}$ accepts $w$, and thus, $V_k(\overline{\calA})$ is closed.
    
    Now it remains to show that $V_k(\overline{\calA})$ is the closure of $V_k(\mathcal{A})$.
    Note that any accepting run in $\mathcal{A}$ is also accepting in $\overline{\calA}$, so $V_k(\mathcal{A}) \subseteq V_k(\overline{\calA})$. 
    Moreover, let $w$ be accepted by $\overline{\calA}$. 
    Then for any $n$, the first $n$ characters of $w$ have a run to some state in $\overline{\calA}$. 
    As the only difference between $\mathcal{A}$ and $\overline{\calA}$ is the set of accept states, there is also a run for these characters in $\mathcal{A}$, and because $\mathcal{A}$ is trim, this run can be extended to an accepting run. 
    So every infinite string accepted by $\overline{\calA}$ has, for all $n$, its first $n$ characters in common with some infinite string accepted by $\mathcal{A}$. 
    It follows that $V_k(\overline{\calA}) \subseteq \overline{V_k(\mathcal{A})}$. 
    But the only closed subset of $\overline{V_k(\mathcal{A})}$ that is a superset of $V_k(\mathcal{A})$ is $\overline{V_k(\mathcal{A})}$ itself.
\end{proof}

\subsection{Spectral radius and Hausdorff dimension}

In \cite{MW88}, the objects whose Hausdorff dimensions they describe are called ``geometric graph directed constructions.''
These constructions would be described in the current terminology of metric geometry as GDIFSs that satisfy the open set condition and have compact attractors.

Recall the open set condition for GDIFSs as defined in \cite{E08}.

\begin{defn}
If $\mathcal{G} = (V,E,s,t,X,S)$ is a GDIFS, then it satisfies the \emph{open set condition} precisely if for all $v \in V$ there are nonempty open sets $U_v \subseteq \mathbb{R}^d$ such that
$$U_u \supseteq f_e[U_v]$$
for all $u,v \in V$ such that $e \in E_{u,v}$, and additionally,
$$f_e[U_v] \cap f_{e'}[U_{v'}] = \emptyset$$
for all $u, v, v' \in V$ such that $e \in E_{u,v}$ and $e'\in E_{u,v'}$.
\end{defn}

\begin{thm}
Suppose that $X \subseteq [0,1]^d$ is a $k$-automatic set recognized by a deterministic B\"uchi automaton $\mathcal{A}$ with corresponding GDIFS $\mathcal{G}_{\mathcal{A}} = (V,E,s,t,X,S)$.
Then $\mathcal{G}_{\mathcal{A}}$ satisfies the open set condition as defined in \cite{E08}.
\end{thm}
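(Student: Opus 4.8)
The plan is to make explicit the GDIFS $\mathcal{G}_{\mathcal{A}}$ attached to a deterministic B\"uchi automaton $\mathcal{A}$ with alphabet $[k]^d$, and then exhibit the family of open sets witnessing the open set condition. The vertex set $V$ is the state set $Q$ of $\mathcal{A}$; for states $u,v$ the edge set $E_{u,v}$ consists of the letters $\sigma \in [k]^d$ with $\delta(u,\sigma) = v$ (here determinism matters: each letter out of $u$ goes to a unique $v$, so the edges out of $u$ are in bijection with a subset of $[k]^d$). The contraction $f_\sigma$ associated to the letter $\sigma = (\sigma_1,\dots,\sigma_d) \in [k]^d$ is the affine map $x \mapsto \frac{1}{k}(x + \sigma)$ on $\mathbb{R}^d$, which scales by $\frac1k$ and whose image is the closed subcube $\prod_{j=1}^d [\frac{\sigma_j}{k}, \frac{\sigma_j+1}{k}]$ of $[0,1]^d$. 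The point is that reading a letter corresponds to prepending a base-$k$ digit, exactly the GDIFS structure identified in \cite{CLR15}.

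The natural candidate for the open sets is the constant family $U_v = (0,1)^d$ for every $v \in V$. First I would check the covering inclusion: for any edge $\sigma \in E_{u,v}$ we have $f_\sigma[(0,1)^d] = \prod_j (\frac{\sigma_j}{k}, \frac{\sigma_j+1}{k}) \subseteq (0,1)^d = U_u$, which holds since $0 \le \sigma_j \le k-1$. Next I would check the disjointness condition: given a fixed source state $u$ and two edges $\sigma \in E_{u,v}$, $\sigma' \in E_{u,v'}$ with $(\,v,\sigma) \ne (v',\sigma')$, determinism of $\mathcal{A}$ forces $\sigma \ne \sigma'$ (two distinct edges out of $u$ must carry distinct letters). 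Then the open subcubes $f_\sigma[(0,1)^d]$ and $f_{\sigma'}[(0,1)^d]$ are the interiors of two distinct cells of the standard $k^d$-grid partition of $[0,1]^d$, hence disjoint. This is precisely what the open set condition demands.

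The only subtlety — and the step I expect to require the most care in writing — is matching conventions: the open set condition as stated in \cite{E08} and quoted above ranges over $u,v,v' \in V$ with $e \in E_{u,v}$, $e' \in E_{u,v'}$, i.e. it compares images of edges sharing a \emph{source}, and it does not exclude the case $e = e'$ except implicitly (one needs $(v,e) \ne (v',e')$ for the two images to be genuinely different). I would state clearly that when $e = e'$ the condition is vacuous, and when $e \ne e'$ determinism gives $\sigma \ne \sigma'$ and hence grid-disjointness. I should also confirm that the GDIFS $\mathcal{G}_{\mathcal{A}}$ produced by the construction of \cite{CLR15} (used implicitly when we write ``corresponding GDIFS'') is exactly the one described above, so that no reindexing is hidden; once the maps are identified as the digit maps $x \mapsto \frac1k(x+\sigma)$, everything is a one-line grid computation. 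Finally, I would remark that $[0,1]^d$ is a compact attractor for this system, so $\mathcal{G}_{\mathcal{A}}$ is of the ``geometric graph directed construction'' type to which the results of \cite{MW88} apply — this is the payoff that motivates the theorem.
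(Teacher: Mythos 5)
Your proposal is correct and follows essentially the same route as the paper's proof: both take $U_v = (0,1)^d$ for every vertex, verify the covering inclusion by the one-line grid computation, and derive disjointness from the fact that determinism forces distinct edges out of a common source to carry distinct labels, so their images are interiors of distinct cells of the $k$-adic grid. Your added remarks on the $e=e'$ convention and on matching the GDIFS of \cite{CLR15} are reasonable bookkeeping but do not change the argument.
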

\begin{proof}
Let $X$, $\mathcal{A}$ and $\mathcal{G}_{\mathcal{A}}$ be as in the hypotheses.
To see that the open set condition holds, for each $v \in V$ let $U_v = (0,1)^d$.
Observe that for any $e \in E$, it is the case that $f_e[U_v] = \{\frac{x+\sigma_e}{r}: x\in U_v\}$, where $\sigma_e \in \Sigma$ is the label of the transition arrow in $\mathcal{A}$ that corresponds to the edge $e$.
Hence it is clear that $U_u \supseteq f_e(U_v)$ for any vertices $u,v \in V$ such that $e$ is an edge from $u$ to $v$, since
$$( 0,1 ) ^d \supseteq \prod_{i=1}^d\left(\frac{\sigma_{e,i}}{r},\frac{ \sigma_{e,i}+1}{r}\right).$$

Suppose now that $u,v,v' \in V$ and that $e \in E_{u,v}$ and $e' \in E_{u,v'}$.
Due to $\mathcal{A}$ being deterministic, it cannot be the case that the transition arrows corresponding to $e$ and $e'$ respectively in $\mathcal{A}$ have the same label. 
Thus we know $\sigma_e \neq \sigma_{e'}$, and as a consequence if an element $\vec{a}$ is in $\{\frac{x+\sigma_e}{r}: x\in (0,1)^d\}$, then it cannot be the case that $\vec{a} \in \{\frac{x+\sigma_{e'}}{r}: x\in U_v\}$, since such sets carve out the interior of disjoint subcubes of $(0,1)^d$ of size $\frac{1}{r}^d$.
Hence the open set condition is satisfied.

\end{proof}

The following corollary is immediate from the fact that a closed B\"uchi automaton is weak, by applying the above theorem.
Recall that weak automata (which include all closed automata) have an equivalent deterministic B\"uchi automaton \cite{PP04}.

\begin{cor}\label{open}
All closed B\"uchi automata satisfy the open set condition.
\end{cor}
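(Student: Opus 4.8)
The plan is to deduce the corollary by chaining the three ingredients recalled just above its statement. Let $\calA = (Q, \Sigma, \delta, S, F)$ be a closed B\"uchi automaton. The first step is to observe that $\calA$ is weak in the sense of Definition \ref{autoproperties}(v): being closed means $F = Q$, so every state is an accept state, and the condition defining weakness --- that any two states lying in a common strongly connected component are either both accepting or both non-accepting --- holds vacuously.

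Next I would invoke the result of \cite{PP04} that every weak B\"uchi automaton has an equivalent deterministic B\"uchi automaton $\calA'$. Equivalence means $\calA'$ recognizes the same $\omega$-language as $\calA$, hence recognizes the same $k$-automatic set $X = V_k(\calA) = V_k(\calA')$ in the sense of Definition \ref{r-reg}. Then I would apply the preceding Theorem directly to $\calA'$: since $\calA'$ is a deterministic B\"uchi automaton recognizing the $k$-automatic set $X$, its corresponding GDIFS $\mathcal{G}_{\calA'}$ satisfies the open set condition of \cite{E08}, witnessed by the open sets $U_v = (0,1)^d$. As $\mathcal{G}_{\calA'}$ is a graph-directed construction representing $X$, this is precisely the statement that the closed automaton $\calA$, via its associated GDIFS, satisfies the open set condition.

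I do not expect a genuine obstacle; the only point needing a sentence of care, rather than being literally automatic, is that determinism is the \emph{sole} hypothesis used in the proof of the Theorem. After the determinization of \cite{PP04} the automaton $\calA'$ need not remain closed, trim, or weak, but none of those features is required --- the substantive geometric content (distinct transition labels forcing the image subcubes of side $1/k$ to be pairwise disjoint) was already handled in the Theorem, so the corollary simply records that the deterministic hypothesis can always be arranged when one starts from a closed automaton.
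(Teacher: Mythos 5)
Your argument is exactly the paper's: the corollary is justified there by the one-line observation that closed automata are weak, that weak automata admit equivalent deterministic B\"uchi automata by \cite{PP04}, and that the preceding theorem then applies to the determinization. Your additional remark that determinism is the only hypothesis actually used in the theorem's proof is a correct and worthwhile clarification, but the route is the same.
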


In order to apply Theorem 5 from the paper \cite{MW88} of Mauldin and Williams, we need the following lemma concerning the equality of the Hausdorff dimension for two strongly connected automata which only differ in which states are initial.

\begin{lem}\label{strongly_connected_start_state}
    Let $\mathcal{A}$ and $\mathcal{A}'$ be two strongly connected B\"uchi automata with the same set of states, set of accept states, and transition relation. Then $d_H(V_k(\calA))$ = $d_H(V_k(\calA'))$.
\end{lem}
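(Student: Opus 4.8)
The plan is to show that the two sets $V_k(\calA)$ and $V_k(\calA')$ are Lipschitz-equivalent up to a countable exceptional set, which forces their Hausdorff dimensions to coincide. Since $\calA$ and $\calA'$ differ only in their start states, fix a start state $q$ of $\calA$ and a start state $q'$ of $\calA'$. Because both automata are strongly connected, there is a finite word $u \in \Sigma^*$ labeling a run from $q'$ to $q$, and a finite word $u' \in \Sigma^*$ labeling a run from $q$ to $q'$. The first step is to observe that for any infinite string $w$ accepted by $\calA$ (say via a run starting at $q$), the concatenation $uw$ is accepted by $\calA'$: the run of $\calA'$ first follows $u$ from $q'$ to $q$, then follows the accepting run of $w$ from $q$, hitting accept states infinitely often (here I use that every state is available as a start state, so the run of $w$ from $q$ is genuinely a run of $\calA$). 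Thus $\{\, uw : w \in L(\calA) \,\} \subseteq L(\calA')$, and symmetrically $\{\, u' w : w \in L(\calA') \,\} \subseteq L(\calA)$.

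Next I translate this into a statement about the real sets via $\nu_k$. Prepending a fixed length-$\ell$ word $u$ (with $\nu_k$-value determined by $u$ on the relevant cube) induces an affine contraction: if $w$ is a base-$k$ representation of $x \in [0,1]^d$, then $\nu_k(uw) = k^{-\ell} x + c_u$ for a constant vector $c_u \in [0,1]^d$ depending only on $u$. Hence the map $x \mapsto k^{-\ell} x + c_u$ sends $V_k(\calA)$ into $V_k(\calA')$, and likewise $x \mapsto k^{-\ell'} x + c_{u'}$ sends $V_k(\calA')$ into $V_k(\calA)$. These are invertible affine transformations of $\R^d$ onto their images, and by the affine-invariance of Hausdorff dimension (Fact in Section \ref{Hdim}, item (iv)) together with monotonicity under inclusion (item (ii)), we get $d_H(V_k(\calA)) \le d_H(V_k(\calA'))$ and $d_H(V_k(\calA')) \le d_H(V_k(\calA))$, which is exactly the claim.

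The one subtlety — and the place where a little care is needed rather than a genuine obstacle — is that $V_k(\calA)$ is defined as the set of reals with \emph{some} accepted representation, and the equivalence relation $\nu_k(v) = \nu_k(w)$ is nontrivial on the $k$-rationals; one must make sure prepending $u$ does not accidentally change which real is represented, but since prepending a fixed finite word is a well-defined operation on $[k]^\omega$ that is compatible with $\nu_k$ up to the affine map above, no problem arises, and in any case the set of $k$-rationals is countable and hence of Hausdorff dimension zero, so it could be discarded at the outset if desired. Assembling the two inequalities completes the proof.
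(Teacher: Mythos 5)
Your proof follows essentially the same route as the paper's: use strong connectedness to produce connecting words in each direction, observe that prepending a fixed word acts under $\nu_k$ as an invertible affine contraction, and conclude by monotonicity and affine invariance of Hausdorff dimension. The one point you gloss over is that $\calA$ may have several start states, so $L(\calA)$ contains strings accepted only from start states other than your fixed $q$ and the inclusion $\{uw : w \in L(\calA)\} \subseteq L(\calA')$ can fail as stated; the paper handles this by first decomposing $V_k(\calA)$ as the finite union over single-start-state automata and working with the start state achieving the maximal dimension, and your argument needs the same (routine) preliminary reduction.
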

\begin{proof}
    Note that $V_k(\calA) = \bigcup_{q_i \in S} V_k(\calA_i)$, where $S$ is the set of start states of $\calA$ and $\calA_i$ is a modification of $\calA$ where $q_i$ is the only start state. 
    Therefore $d_H(V_k(\calA)) = \max_{q_i \in S} d_H(V_k(\calA_i))$. 
    Let $r$ be the value of $i$ giving maximal dimension, i.e. $d_H(V_k(\calA)) = d_H(V_k(\calA_r))$. 
    Define $\calA'_s$ and choose $s$ analogously, such that $d_H(V_k(\calA')) = d_H(V_k(\calA'_s))$ and such that $q_s$ is the only start state in $\calA'_s$.
    
    Because $\calA_r$ is strongly connected, it must contain a path $P$ from $q_s$ to $q_r$; let $v \in ([k]^d)^*$ be a string witnessing path $P$. 
    Let $w \in ([k]^d)^\omega$ be accepted by $\calA_r$. 
    Then $P$ concatenated with an accepting run of $w$ in $\calA_r$ forms an accepting run of $vw$ in $\calA'_s$. 
    Moreover, note that:
    
    $$\nu_k(vw) = \sum_{i=0}^\infty \frac{(vw)_i}{k^{i+1}}$$
    $$= \sum_{i=0}^{|v|-1} \frac{(vw)_i}{k^{i+1}} + \sum_{i=|v|}^\infty \frac{(vw)_i}{k^{i+1}}$$
    $$= \sum_{i=0}^{|v|-1} \frac{v_i}{k^{i+1}} + \sum_{i=0}^\infty \frac{w_i}{k^{|v|+i+1}}$$
    $$= \nu_k(v\vec{0}^\omega) + k^{-|v|} \nu_k(w).$$
    
    Let $f : [0, 1]^d \to [0, 1]^d$ map $x$ to $\nu_k(v\vec{0}^\omega) + k^{-|v|} x$; then the above chain of equalities gives us that $f(V_k(\calA_r)) \subseteq V_k(\calA'_s)$. 
    Because $f$ is an invertible affine transformation, it follows that $d_H(V_k(\calA_r)) \leq d_H(V_k(\calA'_s))$. 
    The analogous argument in the opposite direction gives us $d_H(V_k(\calA'_s)) \leq d_H(V_k(\calA_r))$. 
    Thus:
    
    $$d_H(V_k(\calA)) = d_H(V_k(\calA_r)) = d_H(V_k(\calA'_s)) = d_H(V_k(\calA')).$$
\end{proof}

In \cite{MW88} Mauldin and Williams work with what they call a ``geometric graph directed construction'' on $\R^m$. 
We observe that the GDIFS associated to a closed B\"uchi automaton is nearly an instance of such a ``geometric graph directed construction.''
In \cite{MW88}, however, the authors consider only iterated function systems directed by a graph, whereas the GDIFSs associated to closed automata are in general multigraphs. 
To account for this, though, we observe that we can ``translate'' between multigraph representations of B\"uchi automata and automaton diagrams which are true digraphs.

\begin{defn}\label{dg}
For a deterministic B\"uchi automaton $\calA=(Q,\Sigma,\delta,S,F)$, consider the automaton $(Q \times \Sigma, \Sigma, \delta', S', F \times \Sigma)$, with $\delta'$ and $S'$ defined as follows:
\begin{itemize}
    \item For all $\sigma, \tau \in \Sigma$ and $q, s \in Q$, if $\delta(q, \sigma) = \varnothing$, then $\delta'((q, \tau), \sigma) = \varnothing$. 
    If $\delta(q, \sigma) = \{s\}$, then $\delta'((q, \tau), \sigma) = \{(s, \sigma)\}$. 
    (These are the only possibilities, because $\calA$ is deterministic.)
    \item Fix some $\sigma_0 \in \Sigma$ arbitrarily; then with $S = \{q\}$, we have $S' = \{(q, \sigma_0)\}$.
\end{itemize}
Since this automaton is not necessarily trim, let $\calA_{dg}$ denote the automaton that results from removing any states that are not accessible from the start state or not co-accessible from some accept state.
\end{defn}

\begin{lem}\label{multigraph_to_digraph}
If the automaton $\calA$ is a closed B\"uchi automaton, then $\calA_{dg}$ is an equivalent B\"uchi automaton that is also closed.
\begin{proof}
To see that $\calA$ and $\calA_{dg}$ are equivalent as automata, observe that if $(q_i)_{i\in \N}$ is an accepting run of $(\tau_i)_{i\in \N}$ in $\calA$, then set $\tau_{-1}=\sigma_0$ and observe that the run $(q_i, \tau_{i-1})_{i \in \N}$ of $(\tau_i)_{i\in \N}$ in $\calA_{dg}$ is an acceptance run for $\calA_{dg}$.
Similarly if $(q_i, \sigma_{i})_{i \in \N}$ is an acceptance run of $(\tau_i)_{i\in \N}$ in $\calA_{dg}$, then $(q_i)_{i\in \N}$ is an acceptance run in $\calA$ for the same string.

Since we define $\calA_{dg}$ to be trim and since $Q=F$ for $\calA$, we observe $Q\times \Sigma = F\times \Sigma$ makes $\calA_{dg}$ closed precisely if $\calA$ is closed.
\end{proof}
\end{lem}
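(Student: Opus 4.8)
The plan is to establish two things in turn: that $\calA_{dg}$ recognizes the same $\omega$-language as $\calA$, and that $\calA_{dg}$ is closed in the sense of Definition \ref{autoproperties}. Since closedness will drop out of the analysis used for equivalence, the substantive work is a run-by-run comparison of $\calA$ with the (not necessarily trim) intermediate automaton $\calA' := (Q\times\Sigma,\Sigma,\delta',S',F\times\Sigma)$ of Definition \ref{dg}; the final trimming step that produces $\calA_{dg}$ from $\calA'$ will then be seen to be harmless.

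For the equivalence I would set up a bijection between runs of a fixed $w = w_1 w_2 \cdots \in \Sigma^\omega$ in $\calA$ and in $\calA'$. Given a run $q_0, q_1, q_2, \dots$ of $w$ in $\calA$ (so $S = \{q_0\}$), put $\tau_0 = \sigma_0$ and $\tau_i = w_i$ for $i \geq 1$; then $(q_0,\tau_0),(q_1,\tau_1),(q_2,\tau_2),\dots$ is a run of $w$ in $\calA'$, because determinism of $\calA$ forces $\delta(q_{i-1},w_i) = \{q_i\}$ and hence $\delta'((q_{i-1},\tau_{i-1}),w_i) = \{(q_i,\tau_i)\}$. Conversely, the definition of $\delta'$ forces every run of $w$ in $\calA'$ to have exactly this shape, so the sequence of first coordinates of such a run is a run of $w$ in $\calA$. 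Under this correspondence the $i$-th state lies in $F\times\Sigma$ precisely when $q_i \in F$, so a run is B\"uchi-accepting in $\calA'$ if and only if the corresponding run is B\"uchi-accepting in $\calA$; thus $L(\calA') = L(\calA)$. Passing to the trim automaton $\calA_{dg}$ then changes nothing: every state occurring on an accepting run is accessible from a start state, and — since the run meets the accept set arbitrarily late — is also co-accessible from an accept state, so no state appearing on an accepting run gets deleted. One should also confirm $(q_0,\sigma_0)$ itself survives the deletion, which it does: trimness of $\calA$ provides a nonzero-length path from $q_0$ to an accept state of $\calA$, and its first edge lifts to an outgoing edge of $(q_0,\sigma_0)$ into $F\times\Sigma$. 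Hence $L(\calA_{dg}) = L(\calA)$, i.e. $\calA_{dg}$ is an equivalent B\"uchi automaton.

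For closedness, $\calA_{dg}$ is trim by construction, so it suffices to check that every state of $\calA_{dg}$ is an accept state. As $\calA$ is closed, $Q = F$, so $Q\times\Sigma = F\times\Sigma$; the state set of $\calA_{dg}$ is some $Q'' \subseteq Q\times\Sigma$, and its accept set is $Q'' \cap (F\times\Sigma) = Q''$. Thus $\calA_{dg}$ is trim with every state accepting, i.e.\ closed. I do not expect a genuine obstacle here: the only care needed is keeping the index offset for $\sigma_0$ straight in the run correspondence and invoking the standard fact that trimming a B\"uchi automaton preserves its $\omega$-language.
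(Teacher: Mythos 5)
Your proof is correct and follows essentially the same route as the paper's: the same lift of runs $(q_i)\mapsto(q_i,w_i)$ (with $\sigma_0$ at index $0$) and its inverse via first coordinates, plus the observation that $Q=F$ gives $Q\times\Sigma=F\times\Sigma$. Your extra verification that the trimming step preserves the language and that the start state $(q_0,\sigma_0)$ survives is a detail the paper leaves implicit, but it is not a different approach.
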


The next corollary follows from the remarks made in the proof of Lemma \ref{multigraph_to_digraph} and essentially says that connected components are preserved by the construction in Definition \ref{dg}.

\begin{cor}\label{scdg}
If $C$ is a strongly connected component of $\calA$, then there is a corresponding strongly connected component of $\calA_{dg}$, call it $C_{dg}$, such that the induced sub-automata generated by $C$ and $C_{dg}$ (respectively) recognize the same strings up to prefixes. 
In other words, let $L$ be recognized by the induced sub-automaton generated by $C$, and likewise for $L_{dg}$ and $C_{dg}$; then there exist words $u, v$ such that $L = uL_{dg}$ and $L_{dg} = vL$.
\end{cor}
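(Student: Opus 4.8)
The plan is to use the projection $\pi\colon Q\times\Sigma\to Q$ given by $\pi(q,\tau)=q$. As recorded in the proof of Lemma~\ref{multigraph_to_digraph}, every run of $\calA_{dg}$ projects under $\pi$ to a run of $\calA$ reading the same string, and conversely every run of $\calA$ lifts to a run of $\calA_{dg}$ once the second coordinate of the initial state is chosen (all later second coordinates being forced by the transition function, since $\calA$ is deterministic). Because $\pi$ is a morphism of digraphs, it maps paths to paths, and therefore maps each strongly connected component of $\calA_{dg}$ into a single strongly connected component of $\calA$; what must be shown is that one such component maps \emph{onto} $C$ and that the two induced languages then agree up to a prefix. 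Throughout I assume $\calA$ is closed, the setting of Lemma~\ref{multigraph_to_digraph}; in particular every state is accepting, so an induced sub-automaton of a component recognizes a nonempty $\omega$-language exactly when the component is nontrivial.

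If $C$ is a single state bearing no edge of $\calA$, then the induced sub-automaton of $C$ accepts nothing and so does every component of $\calA_{dg}$ above it, and we finish with $u=v=\epsilon$. Assume then that $C$ is nontrivial, and fix an edge $p\to q$ of $\calA$ inside $C$ with label $a$. Then $(q,a)$ is a state of the pre-trim automaton of Definition~\ref{dg}; it is reachable from the start state of $\calA_{dg}$ by lifting to $\calA_{dg}$ a path of $\calA$ from its start state to $p$ followed by the edge $p\to q$ (such a path exists since $\calA$ is trim); and, since $C$ is strongly connected, there is a path of $C$ from $q$ to $p$, so lifting the loop $q\to\cdots\to p\to q$ from $(q,a)$ yields a cycle of $\calA_{dg}$ through $(q,a)$. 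Hence $(q,a)$ is a genuine state of $\calA_{dg}$ lying on a cycle; let $C_{dg}$ be its strongly connected component.

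I would next prove $\pi(C_{dg})=C$. The inclusion $\subseteq$ is immediate since $\pi$ is a morphism. For $\supseteq$, take $q'\in C$: lift a path of $C$ from $q$ to $q'$ starting at $(q,a)$, ending at some state $(q',b)$, and then lift a path of $C$ from $q'$ to $q$ chosen so that its final edge is $p\to q$ — this lift starts at $(q',b)$ and ends at $(q,a)$, so $(q',b)$ and $(q,a)$ are mutually reachable and $(q',b)\in C_{dg}$, with $\pi(q',b)=q'$. The same trick then gives the language statement. A run of the induced sub-automaton of $C_{dg}$ projects under $\pi$ to a run of the induced sub-automaton of $C$ reading the same string, so (recalling all states are accepting) $L_{dg}\subseteq L$. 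Conversely, a run of the induced sub-automaton of $C$ from a state $q'$ lifts, starting from a state $(q',b)\in C_{dg}$ supplied by the previous construction, to a run of $\calA_{dg}$ on the same string, and this lift stays inside $C_{dg}$: each state $(q'',c)$ it visits is reachable from $(q,a)$ through $(q',b)$, and reaches $(q,a)$ by lifting a path of $C$ from $q''$ to $q$ whose last edge is $p\to q$, hence lies in $C_{dg}$. So $L\subseteq L_{dg}$, giving $L=L_{dg}$ and the corollary with $u=v=\epsilon$. If the definition of an induced sub-automaton instead pins its start state to some state of $C$ (resp.\ of $C_{dg}$) other than the one used above, prepending a word that witnesses a path inside the component from the prescribed start state to the one used here only shifts each language by a fixed prefix, and that supplies the desired $u$ and $v$.

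The step I expect to be the main obstacle is the claim that the lift of a run confined to $C$ stays inside the single component $C_{dg}$: the preimage $\pi^{-1}(C)$ can genuinely break into several components of $\calA_{dg}$, some of them trivial, so one cannot argue by preimages alone. The remedy, used repeatedly above, is to route every return path through the fixed edge $p\to q$ so that its lift is forced to end at the distinguished state $(q,a)$, which is exactly what pins all the relevant lifts into $C_{dg}$.
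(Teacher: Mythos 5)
Your proof is correct and follows essentially the same route as the paper: identify the strongly connected component $C_{dg}$ of $\calA_{dg}$ lying over $C$, check that lifts and projections of runs preserve the accepted strings, and absorb the choice of start states into the prefixes $u,v$. The only real difference is the device used to certify that all the relevant states of $\calA_{dg}$ lie in a single component: the paper lifts one closed walk through $q$ that traverses every transition of $C$, so that all states $(q_i,w_i)$ lie on a single cycle, whereas you anchor everything to a fixed edge $p\to q$ labelled $a$ and route every return path through that edge so each lifted state is mutually reachable with $(q,a)$ --- an equivalent argument, and you are somewhat more careful than the paper about trivial components and about accessibility after trimming.
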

\begin{proof}
    First, we note that the choice of start state in each induced sub-automaton does not matter. 
    In a strongly connected automaton, if the start state is moved, the resulting language and original language are equal up to prefixes (as defined in the statement of the corollary). So we will allow the start states to be chosen freely.
    
    Let $\calA_C$ denote the sub-automaton of $\calA$ consisting of the states and transitions contained within $C$, with arbitrarily chosen $q \in C$ as its only start state. 
    Then by strong connectedness, there exists a word $w$ and a corresponding run $(q_i)_{0 \leq i \leq |w|}$ where $q_0 = q_n = q$ and where, for each transition within $C$, there exists an $i$ witnessing the transition, i.e. if there is an arrow from state $r$ to state $s$ on the character $\sigma$, then there exists $i$ such that $q_{i-1} = r, q_i = s, w_i = \sigma$.
    
    Let $C_{dg}$ be the strongly connected component of $(q, w_n)$ in $\calA_{dg}$, and let $\calA_{C_{dg}}$ be its induced sub-automaton with $(q, w_n)$ as the start state. 
    Then there is a corresponding path $((q_i, w_i))_{0 \leq i \leq |w|}$ (letting $w_0 = w_n$) in $\calA_{C_{dg}}$; this path is a cycle, so each state $(q_i, w_i)$ is in $C_{dg}$.
    
    Let $u$ be any infinite word accepted by $\calA_C$. 
    Following the proof of the previous lemma, note that because each transition in the acceptance run for $u$ is also somewhere in the run for $w$, and therefore it has a corresponding transition in $\calA_{C_{dg}}$. 
    So $u$ is accepted by $\calA_{C_{dg}}$. 
    The reverse inclusion follows immediately from the same argument as in Lemma \ref{multigraph_to_digraph}.
\end{proof}

The following is implied by the results in \cite{S89} in which the author shows that for closed languages the Hausdorff dimension and entropy agree, in conjunction with the countable additivity of entropy, as shown in \cite{S85}.

\begin{lem}\label{hdim}
Let $\calA$ be a closed B\"uchi automaton.
Let $SC(\calA)$ denote the set of strongly connected components of $\calA$.
For each strongly connected component $C$ of $\calA$, let $\calA_{C}$ denote the sub-automaton of $\calA$ consisting of the states and transitions contained within $C$, with arbitrarily chosen $q \in C$ as its only start state.

Then 
\[
d_H(V_k(\calA)) = \max_{C \in SC(\calA)}\left( d_H( V_{k}(\calA_{C})) \right).
\]
\begin{proof}
By Lemma \ref{strongly_connected_start_state}, the Hausdorff dimension of the automaton $\calA_C$ does not depend on the choice of $q$, its start state.
Let $K$ be the attractor of $G_{\calA}$, i.e. the set that $\calA$ recognizes.
We recall the statement of Theorem 5 of \cite{MW88}.
It states that if $K= \bigcup_{v \in V} K_v$ is the attractor of GDIFS $G$, then $d_H(K_v) = \alpha_v = \max \{ \alpha_H| H \in SC_v(G)\}$.
Here, $SC_v(G)$ denotes the set of strongly connected components of $G$ that are accessible from $v$.

By Corollary \ref{open} we know that $\calA_{dg}$ satisfies the definition in \cite{MW88} of a geometric graph directed construction.
By Lemma \ref{multigraph_to_digraph} we know that the automaton $\calA_{dg}$ recognizes the same set as $\calA$, hence $d_H(V_k(\calA))=d_H(\calA_{dg})$.
For each strongly connected component $C$ of $\calA$, 
let $\calA_C$ be the automaton whose automaton diagram is given by taking $C$ and assigning an arbitrary start state from those in the vertex set of $C$.
By Corollary \ref{scdg}, for each $C$ a strongly connected component of $G$ there is a corresponding strongly connected component $C_{dg}$ of $\calA_{dg}$ such that $d_H(V_k(\calA_C))=d_H(V_k(\calA_{C,dg}))$, where $\calA_{C,dg}$ is an automaton whose automaton diagram is given by taking $C_{dg}$ and assigning an arbitrary start state from those in the vertex set of $C_{dg}$.

Since $\calA_{dg}$ has a true digraph structure (rather than that of a multigraph), we can apply Theorem 3 of \cite{MW88} to obtain that the Hausdorff dimension of $V_k(\calA_{C,dg})$ is the unique $\alpha$ such that $\sprad{\mathbf{M}(\calA_C, \alpha)} =1$.
Note that the value of $\max_{C \in SC(\calA)} d_H( V_{k}(\calA_{C}))$ is well-defined by Lemma \ref{strongly_connected_start_state}.
By Theorem 4 of \cite{MW88}, we conclude that the dimension of $V_k(\calA_{dg})$ is the maximum over all such $\alpha$ for $\calA_{C,dg} \in SC(\calA_{dg})$.

\end{proof}
\end{lem}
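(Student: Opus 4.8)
The plan is to prove the two inequalities $d_H(V_k(\calA)) \le \max_{C \in SC(\calA)} d_H(V_k(\calA_C))$ and the reverse directly, using only that Hausdorff dimension is stable under countable unions and under invertible affine maps, together with Lemma \ref{strongly_connected_start_state}; the apparatus of \cite{MW88} then enters as an (optional, but useful) refinement that moreover identifies each $d_H(V_k(\calA_C))$ with the unique $\alpha$ for which $\sprad \mathbf{M}(\calA_C,\alpha) = 1$. First note the right-hand side is well-posed: by Lemma \ref{strongly_connected_start_state} the value $d_H(V_k(\calA_C))$ does not depend on which state of $C$ is taken as the start state of $\calA_C$, so we may speak of it unambiguously.

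For the inequality ``$\le$'', consider an accepting run of an infinite word $w$ in the closed automaton $\calA$. The set of states visited infinitely often is contained in a single strongly connected component $C$ (any two such states lie on a common cycle traversed by the run), and from some finite position $N$ on the run never leaves $C$; since every state of $\calA$ is accepting, this tail is itself an accepting run of the induced sub-automaton on $C$ started at the state occupied at time $N$. Writing $w = w'x$ with $|w'| = N$ gives $\nu_k(w) = \nu_k(w'\vec{0}^{\omega}) + k^{-N}\nu_k(x)$, where $\nu_k(x) \in V_k(\calA_C)$ for the appropriate choice of start state. Thus $V_k(\calA)$ is contained in a countable union, indexed by the finite word $w'$, the component $C$, and a state of $C$, of invertible affine images of the sets $V_k(\calA_C)$; countable stability and affine invariance of $d_H$ now yield the bound.

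For the inequality ``$\ge$'', fix a component $C$ and a start state $q$ for $\calA_C$. Since $\calA$ is trim there is a finite word $v$ tracing a path from a start state of $\calA$ to $q$, and prepending $v$ to any infinite word accepted by $\calA_C$ produces an infinite word accepted by $\calA$ (follow $v$ to $q$, then the accepting run inside $C$, which visits accepting states infinitely often as all states are accepting). Hence the invertible affine map $y \mapsto \nu_k(v\vec{0}^{\omega}) + k^{-|v|}y$ carries $V_k(\calA_C)$ into $V_k(\calA)$, so $d_H(V_k(\calA_C)) \le d_H(V_k(\calA))$; maximizing over $C \in SC(\calA)$ finishes this direction.

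Alternatively — and this is the route that also supplies the spectral-radius characterization of each $d_H(V_k(\calA_C))$ — one realizes $V_k(\calA)$ as the attractor of a geometric graph directed construction in the sense of \cite{MW88}: replace $\calA$ by a deterministic closed equivalent (closed automata are weak, hence determinizable by \cite{PP04}), pass to the true-digraph form $\calA_{dg}$ via Definition \ref{dg} and Lemma \ref{multigraph_to_digraph}, note by Corollary \ref{open} that its associated GDIFS satisfies the open set condition, and observe that its attractor agrees with $V_k(\calA)$ up to a countable set of $k$-rationals irrelevant to Hausdorff dimension. Theorems 3--5 of \cite{MW88} then give $d_H(V_k(\calA_{dg})) = \max_H \alpha_H$, the maximum over strongly connected components $H$ of $\calA_{dg}$ (all reachable from the unique start state, as $\calA_{dg}$ is trim), with $\alpha_H$ the unique $\alpha$ solving $\sprad \mathbf{M}(\cdot,\alpha)=1$; and Corollary \ref{scdg} matches these $H$ bijectively with the components of $\calA$ while preserving the associated dimensions. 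The one point needing care in this second route is exactly this last matching — ensuring the determinization and multigraph-to-digraph passages do not disturb the component-wise dimensions — which is precisely what Lemma \ref{strongly_connected_start_state}, Lemma \ref{multigraph_to_digraph}, and Corollary \ref{scdg} are built to guarantee; in the direct route the corresponding (and only) subtlety is the decomposition of accepting runs relative to the strongly connected components, which is elementary.
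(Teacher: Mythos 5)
Your proposal is correct, and your primary (direct) argument is genuinely different from the paper's proof, which runs entirely through the Mauldin--Williams machinery. You decompose each accepting run of the closed automaton by observing that its infinitely-visited states lie in a single strongly connected component $C$ in which the run is eventually trapped, so that $V_k(\calA)$ is covered by countably many invertible affine images of the sets $V_k(\calA_C)$; combined with the reverse inclusion obtained by prefixing a path from a start state to $C$ (available by trimness, which is part of the definition of closed), the countable stability and affine invariance of $d_H$ give both inequalities, with Lemma \ref{strongly_connected_start_state} ensuring the right-hand side is well-defined. This is more elementary and self-contained than the paper's route, which realizes $V_k(\calA)$ as the attractor of a geometric graph directed construction and must therefore verify the open set condition (Corollary \ref{open}), pass through determinization of weak automata and the multigraph-to-digraph conversion (Lemma \ref{multigraph_to_digraph}, Corollary \ref{scdg}), and invoke Theorems 3--5 of \cite{MW88}. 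What the paper's route buys --- and what your direct argument does not supply --- is the spectral-radius identification of each component's dimension as the unique $\alpha$ with $\sprad{\mathbf{M}(\calA_C,\alpha)}=1$, which is used downstream in the proof of Theorem \ref{closed_equality}; your second, alternative route recovers exactly this and matches the paper's proof. The only point worth tightening in the direct argument is the harmless convention for trivial strongly connected components (a single state with no self-loop), for which $V_k(\calA_C)$ is empty and contributes nothing to either side; the paper glosses over the same point.
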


\begin{thm}\label{closed_equality}
If $X$ is a closed $k$-automatic set in $\R$ recognized by closed automaton $\calA$, then $d_H(X) = d_B (X)= \frac{1}{\log (k)} h(L(\calA))$, where $L(\calA)$ is the set of strings $\calA$ recognizes.
\begin{proof}
By Lemma \ref{hdim}, it suffices to show that $d_H(X) = d_B (X)= \frac{1}{\log (k)} h(L(\calA_{dg}))$, since $L(\calA) = L(\calA_{dg})$ by Lemma \ref{multigraph_to_digraph}.
Suppose that $\calB$ is the induced sub-automaton given by taking a strongly connected component of $\calA_{dg}$ and selecting a start state arbitrarily.
Let $L$ be the language that $\calB$ recognizes as a subset of $[k]^{\omega}$.
By Theorem 3 of \cite{MW88} the Hausdorff dimension of $V_k(\calB)$ is the unique $\alpha$ such that $\sprad{M(\calB, \alpha)} =1$.

By computations from \cite{MW88}, we know that if $\vec{u} = (1, \ldots ,1) \in \R^m$, where $\calB$ has $m$ states, then
$$\sprad{\mathbf{M}(\calB, \alpha)}= \lim_{n \to \infty} \Vert \mathbf{M}(\calB, \alpha)^n\vec{u} \Vert^{1/n}_1.$$
Moreover, the authors show by induction in \cite{MW88} that the following holds:
$$\Vert \mathbf{M}(\calB, \alpha)^n\vec{u} \Vert_1 = \sum_{w\in L^{pre}_n} \left( \frac{1}{k^n}\right)^{\alpha}$$
where $L^{pre}_n$ is the set of length-$n$ prefixes of $L$.
Hence we get the following:
$$\sprad{\mathbf{M}(\calB, \alpha)}= \lim_{n \to \infty}\left( \sum_{w\in L^{pre}_n} \left( \frac{1}{k^n}\right)^{\alpha} \right)^{1/n}$$
and taking the logarithm base $k$ of the equation  $\sprad{\mathbf{M}(\calB, \alpha)} =1$, we obtain this equality:
$$\lim_{n \to \infty}\frac{1}{n}\log_k\left( \frac{1}{k^{n\alpha}} \vert L^{pre} \vert_n \right) =0.$$
Consequently, we obtain the following:
$$\lim_{n \to \infty}\frac{1}{n}\log_k (k^{-n\alpha}) + \lim_{n \to \infty}\frac{1}{n}\log_k(\vert L^{pre} \vert_n ) =0$$
which yields the final equality $\alpha = \lim_{n \to \infty}\frac{\log_k(\vert L^{pre} \vert_n)}{n} =\frac{1}{ \log(k)}h(\calB)$, as desired.

Finally, using Lemma \ref{hdim} we know that $d_H(X)$ is the maximum of the Hausdorff dimensions of the strongly connected components of $\calA$.
We note that by Fact \ref{language_radius_entropy}, the same holds for entropy, because when we put the incidence matrix for $\calA$ in Jordan canonical form $PJP^{-1}$, the blocks of the block-diagonal matrix $J$ correspond to the strongly connected components of $\calA$.
Since $\sprad J$ is the maximum of that of its diagonal blocks, we can conclude that $d_H(X) = \frac{1}{\log (k)}h(X)= d_B(X)$ by Lemma \ref{box_counting_entropy}.

\end{proof}
\end{thm}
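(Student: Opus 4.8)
\emph{Proof proposal.} The plan is to reduce the computation to a single strongly connected component, apply the Mauldin--Williams spectral-radius characterization of Hausdorff dimension there, and recognize that characterization as the entropy of the associated prefix language. First, by Lemma \ref{multigraph_to_digraph} we have $L(\calA) = L(\calA_{dg})$ and $\calA_{dg}$ is again a closed automaton, so it suffices to prove the claim for $\calA_{dg}$; the point of passing to $\calA_{dg}$ is that it carries a genuine digraph (not multigraph) structure and, by Corollary \ref{open}, satisfies the open set condition, so the theorems of \cite{MW88} apply to it directly. Then, by Lemma \ref{hdim}, $d_H(X)$ equals the maximum over strongly connected components $C$ of $\calA_{dg}$ of $d_H(V_k(\calB_C))$, where $\calB_C$ is the induced sub-automaton on $C$ with an arbitrarily chosen start state (this choice is immaterial by Lemma \ref{strongly_connected_start_state}).

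Next I would analyze a single such $\calB = \calB_C$, say with $m$ states, recognizing the $\omega$-language $L$. Theorem 3 of \cite{MW88} identifies $d_H(V_k(\calB))$ as the unique $\alpha$ with $\sprad{\mathbf{M}(\calB,\alpha)} = 1$. Combining this with the \cite{MW88} computation $\sprad{\mathbf{M}(\calB,\alpha)} = \lim_{n\to\infty}\Vert \mathbf{M}(\calB,\alpha)^n\vec{u}\Vert_1^{1/n}$ for $\vec{u} = (1,\dots,1)$, together with the inductive identity $\Vert \mathbf{M}(\calB,\alpha)^n\vec{u}\Vert_1 = k^{-n\alpha}\,|L^{pre}|_n$ established in \cite{MW88}, and then taking $\log_k$ and dividing by $n$, the equation $\sprad{\mathbf{M}(\calB,\alpha)} = 1$ rewrites as $\alpha = \lim_{n\to\infty} \tfrac{1}{n}\log_k |L^{pre}|_n = \tfrac{1}{\log k}\,h(L)$, where the limit exists by Lemma \ref{language_entropy_prefix_limit} and equals $h(L)$ by Fact \ref{lang_entropy_union}. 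Thus $d_H(X) = \max_C \tfrac{1}{\log k}h(L(\calB_C))$.

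It then remains to identify this maximum with $\tfrac{1}{\log k}h(L(\calA_{dg}))$ and to match it to the box-counting dimension. For the first, I would invoke Fact \ref{language_radius_entropy}: the entropy of $L(\calA_{dg})$ is $\sprad{k\mathbf{M}(\calA_{dg},1)}$, and in a Jordan decomposition $PJP^{-1}$ of the weighted adjacency matrix the diagonal blocks of $J$ correspond to the strongly connected components of $\calA_{dg}$, while $\sprad J$ is the maximum of the spectral radii of those blocks; hence $h(L(\calA_{dg})) = \max_C h(L(\calB_C))$ and $d_H(X) = \tfrac{1}{\log k}h(L(\calA))$. Finally, Lemma \ref{box_counting_entropy} gives directly $d_B(X) = \tfrac{1}{\log k}h(L(\calA)^{pre}) = \tfrac{1}{\log k}h(L(\calA))$ by Fact \ref{lang_entropy_union}, closing the chain of equalities. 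The main obstacle I anticipate is bookkeeping rather than any hard estimate: one must check that Corollary \ref{scdg}, Lemma \ref{multigraph_to_digraph}, and the Jordan-form decomposition all line up the strongly connected components of $\calA$, of $\calA_{dg}$, and of the weighted adjacency matrix consistently (and, for Fact \ref{language_radius_entropy} to apply, that one may take $\calA$ deterministic with all states accepting, which holds since closed automata are weak); the single genuinely computational ingredient, the identity for $\Vert \mathbf{M}(\calB,\alpha)^n\vec{u}\Vert_1$, is already supplied by \cite{MW88}.
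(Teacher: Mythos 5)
Your proposal is correct and follows essentially the same route as the paper's own proof: reduce to strongly connected components of $\calA_{dg}$ via Lemma \ref{hdim}, apply Theorem 3 of \cite{MW88} together with the spectral-radius and $\Vert \mathbf{M}(\calB,\alpha)^n\vec{u}\Vert_1$ identities to equate the dimension of each component with the entropy of its prefix language, then match the maximum over components to $h(L(\calA))$ by the Jordan-form argument and close with Lemma \ref{box_counting_entropy}. The only differences are cosmetic (you cite Lemma \ref{language_entropy_prefix_limit} and Fact \ref{lang_entropy_union} explicitly where the paper leaves them implicit).
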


\section{Cycle languages and when dimensions disagree}\label{dimensions}

We would like to know in the general case when the Hausdorff dimension of a $k$-automatic set is not equal to its box-counting dimension. 
A fundamental example of this is the set of dyadic rationals in $[0, 1]$: $\{\frac{a}{2^n} : a, n \in \mathbb{N}\} \cap [0, 1]$. 
This set is $2$-automatic because it can be equivalently phrased as the set of numbers in $[0, 1]$ whose binary expansions only have a finite number of one of the two bits ($0$ or $1$). 
It has Hausdorff dimension $0$, as it is countable, and box-counting dimension $1$, as it is dense in the interval. 
Examining a B\"uchi automaton for this set, we note that it seems the reason for this disparity in dimension is that there are many ways to get from the start state to an accept state but few ways (one way, in fact) to loop from an accept state back to itself. 
We can in fact formalize this notion and obtain a sufficient and necessary condition for the equivalence of the two notions of dimension by defining the notion of a \textit{cycle language.}

\begin{defn}\label{cycle}
	In a finite or B\"uchi automaton $\mathcal{A}$ with $q$ as one of its states, the \textit{cycle language} $C_q(\mathcal{A}) \subseteq \Sigma^*$ contains all strings $w$ for which there is a run of $\mathcal{A}$ from state $q$ to itself on the string $w$.
	Let $\calC_q(\calA)$ denote the automaton constructed by taking $\mathcal{A}$, making state $q$ the only start and accept state, and trimming the resulting automaton. 
	Call this the \emph{cycle automaton}.
	
\end{defn}

Note that cycle languages are regular, and this is witnessed by the cycle automaton. 
Note also that because looping to an accept state multiple times (or zero times) is still a loop, $C_q(\mathcal{A})^* = C_q(\mathcal{A})$.

\begin{lem}
\label{hausdorff_dimension_cycle_languages}
	Let $\mathcal{A} = (Q, \Sigma, \delta, S, F)$ be a trim B\"uchi automaton, and let $X = V_k(\mathcal{A})$. 
	Let $X_q = V_k(\calC_q(\mathcal{A}))$ for each $q \in Q$. Then:
	
	\begin{enumerate}[(i)]
		\item $d_H(X) = \max_{q \in F} d_H(X_q)$;
		\item $d_B(X) = \max_{q \in Q} d_H(X_q)$.
	\end{enumerate}
\end{lem}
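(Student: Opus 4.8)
The plan is to route everything through the single per-state identity
$$d_H\big(V_k(\calC_q(\calA))\big)=d_B\big(V_k(\calC_q(\calA))\big)=\frac{1}{\log k}\,h\big(C_q(\calA)\big)\qquad (q\in Q),$$
and then to read (i) off a decomposition of $X$ into affine copies of the sets $X_q:=V_k(\calC_q(\calA))$ with $q\in F$, and (ii) off the closed-case results applied to $\overline{X}$. Note first that after trimming, $\calC_q(\calA)$ lives exactly on the strongly connected component of $q$, so it is strongly connected; and $C_q(\calA)$ is closed under concatenation (as recorded after Definition \ref{cycle}), so it is either $\{\epsilon\}$ — in which case $X_q$ is empty and $q$ is irrelevant to both maxima — or an infinite sub-semigroup of $\Sigma^*$. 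I will assume the latter.

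For the upper bound in the key identity I would observe that the set of finite prefixes of the $\omega$-language recognized by $\calC_q(\calA)$ is precisely $C_q(\calA)^{pre}$ — both are the set of labels of paths from $q$ to a state that can reach $q$ — so Lemma \ref{box_counting_entropy} gives $d_B(X_q)=\tfrac1{\log k}h(C_q(\calA)^{pre})$, and $h(C_q(\calA)^{pre})=h(C_q(\calA))$ by Fact \ref{language_entropy_summation}; since $d_H\le d_{\underline B}$ always, this yields ``$\le$''. For the lower bound, fix $N\in\N$ and put $\Gamma_N=C_q(\calA)\cap\Sigma^N$. Every word of $\Gamma_N^\omega$ is accepted by $\calC_q(\calA)$ (its run returns to $q$ at each multiple of $N$), so $Y_N:=\nu_k(\Gamma_N^\omega)\subseteq X_q$. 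But $Y_N$ is the attractor of the iterated function system $\{x\mapsto \nu_k(c\vec0^\omega)+k^{-N}x : c\in\Gamma_N\}$, whose maps are similarities of ratio $k^{-N}$ carrying $(0,1)^d$ onto pairwise disjoint level-$N$ base-$k$ subcubes, so the open set condition holds; hence by \cite{F03} (or by Theorem \ref{closed_equality} applied to the closed automaton recognizing $\Gamma_N^\omega$) $d_H(Y_N)=\tfrac{\log|\Gamma_N|}{N\log k}=\tfrac{\log|C_q(\calA)|_N}{N\log k}$. Since $|C_q(\calA)|_{jN}\ge|C_q(\calA)|_N^{\,j}$, the supremum over $N$ of $\tfrac1N\log|C_q(\calA)|_N$ equals $h(C_q(\calA))$, and letting $N$ range gives $d_H(X_q)\ge\tfrac1{\log k}h(C_q(\calA))$, completing the identity.

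To deduce (i): every $x\in X$ has an accepted representation whose run visits some $q\in F$ infinitely often; cutting the run at the first visit to $q$ expresses $x=\nu_k(w\vec0^\omega)+k^{-|w|}y$ where $w$ labels a path from a start state to $q$ and $y\in X_q$ (the tail run stays in the component of $q$ and revisits $q$ infinitely often, hence is accepted by $\calC_q(\calA)$), and conversely each such combination lies in $X$. Thus $X=\bigcup_{q\in F}\bigcup_w f_w(X_q)$ over the countably many affine contractions $f_w(x)=\nu_k(w\vec0^\omega)+k^{-|w|}x$, and since $d_H$ is invariant under invertible affine maps while $d_H\big(\bigcup_i Y_i\big)=\max_i d_H(Y_i)$, we get $d_H(X)=\max_{q\in F}d_H(X_q)$. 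To deduce (ii): $d_B(X)=d_B(\overline{X})$, and $\overline{X}=V_k(\overline{\calA})$ with $\overline{\calA}$ closed (Lemma \ref{automaton_closure}), so Theorem \ref{closed_equality} gives $d_B(\overline{X})=d_H(\overline{X})$ and Lemma \ref{hdim} gives $d_H(\overline{X})=\max_{C\in SC(\calA)}d_H\big(V_k(\overline{\calC_{q_C}(\calA)})\big)$ for any choice of $q_C\in C$. Finally $V_k(\overline{\calC_q(\calA)})=\overline{X_q}$, and $\overline{\calC_q(\calA)}$ is closed with finite-string language $C_q(\calA)^{pre}$, so Theorem \ref{closed_equality} together with the key identity gives $d_H(\overline{X_q})=\tfrac1{\log k}h(C_q(\calA))=d_H(X_q)$; since $h(C_q(\calA))$ depends only on the component of $q$ and the degenerate components contribute nothing, $\max_{C\in SC(\calA)}$ may be replaced by $\max_{q\in Q}$, giving $d_B(X)=\max_{q\in Q}d_H(X_q)$.

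The technical heart is the lower bound in the key identity: it asserts that the (in general non-closed) set $X_q$ has the same Hausdorff dimension as its closure, which is false for arbitrary sets and is exactly what forces the explicit self-similar subsets $Y_N$ together with the verification of the open set condition. Everything else is bookkeeping — matching the prefix language of $\calC_q(\calA)$ with $C_q(\calA)^{pre}$, recording that $\calC_q(\calA)$ is strongly connected so that $C_q(\calA)$ is a sub-semigroup of $\Sigma^*$, checking disjointness of the level-$N$ subcubes, and disposing of the degenerate strongly connected components.
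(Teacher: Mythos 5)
Your proof is correct, and part (i) is essentially the paper's argument verbatim: decompose $X=\bigcup_{q\in F}\bigcup_{w}f_w(X_q)$ over key accept states and access words, then use countable stability of $d_H$ and affine invariance. Where you diverge is in part (ii) and in the ``key identity'' $d_H(X_q)=d_B(X_q)=\tfrac{1}{\log k}h(C_q(\calA))$ that your part (ii) leans on. The paper gets (ii) almost for free: since $\calC_q(\overline{\calA})=\calC_q(\calA)$ (cycle automata ignore the accept-state set), it simply applies its own part (i) to the closed automaton $\overline{\calA}$, where every state is accepting, and then invokes Lemma \ref{automaton_closure} and Theorem \ref{closed_equality}; no identity between $d_H(X_q)$ and $h(C_q(\calA))$ is needed at this stage (that identity is only established afterwards, in Lemma \ref{dimension_entropy_cycle}, precisely by combining this lemma's machinery with Lemma \ref{hausdorff_omega_limit}). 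Your route instead proves the identity up front and independently: the upper bound via Lemma \ref{box_counting_entropy} and $h(L^{pre})=h(L)$, and the lower bound by exhibiting the self-similar subsets $Y_N=\nu_k\bigl((C_q(\calA)\cap\Sigma^N)^{\omega}\bigr)\subseteq X_q$, computing $d_H(Y_N)$ from the Moran equation under the open set condition, and using superadditivity of $n\mapsto\log|C_q(\calA)|_n$ to recover $h(C_q(\calA))$ in the limit. This replaces the paper's Lemma \ref{hausdorff_omega_limit} (which splices return words $v_qu_q$ into prefixes every $n$ characters to build closed approximating languages $M_n$ and compares entropies) with a cleaner, more elementary argument that exploits the semigroup structure of cycle languages; the trade-off is that it only covers the cycle-automaton case, whereas the paper's lemma applies to arbitrary finite automata and is reused elsewhere. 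Your detour through Lemma \ref{hdim} in (ii) is valid but unnecessary; applying (i) to $\overline{\calA}$ directly is shorter. The only loose ends — states $q$ with $C_q(\calA)=\{\epsilon\}$, for which $X_q=\varnothing$ and the entropy is undefined — you flag and dispose of correctly, and the paper glosses over them in the same way.
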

\begin{proof}
	\begin{enumerate}[(i)]
		
		\item Let $L_q$ be the language of words that have a path from a start state of $\mathcal{A}$ to state $q$. 
		An infinite string is accepted by $\mathcal{A}$ precisely when it has a path that runs from a start state to an accept state and then cycles back to that accept state infinitely often. 
		Thus,
		
		$$L(\mathcal{A}) = \bigcup_{q \in F} \bigcup_{w \in L_q} w C_i(\mathcal{A})^\omega.$$
		
		Let $T_w$ be the (linear) transformation on a set corresponding to prefixing the string $w$. 
		Applying $d_H \circ V_k$ to both sides, we get:
		
		$$d_H(X) = d_H\left(\bigcup_{q \in F} \bigcup_{w \in L_q} T_w(X_q) \right)$$
		
		Applying the formula for Hausdorff dimension of a countable union:
		
		$$d_H(X) = \sup_{q \in F} \sup_{w \in L_q} d_H(T_w(X_q))$$
		
		Because (invertible) linear transformations do not affect dimension:
		
		$$d_H(X) = \sup_{q \in F} \sup_{w \in L_q} d_H(X_q)$$
		
		Eliminating the now-useless quantification over $w$:
		
		$$d_H(X) = \sup_{q \in F} d_H(X_q)$$
		
		Because $A$ is finite:
		
		$$d_H(X) = \max_{q \in F} d_H(X_q).$$
		
		\item Because $\mathcal{A}$ is trim, we have $\overline{X} = V_k(\overline{\mathcal{A}})$ according to Lemma \ref{automaton_closure}. 
		So applying (i), we get that $d_H(\overline{X}) = \max_{q \in Q} d_H(X_q)$ (note that the cycle language $C_q(\mathcal{A})$ does not depend on which states are accepting).
		Yet we know from Theorem \ref{closed_equality} that $d_H(\overline{X}) = d_B(\overline{X}) = d_B(X)$.
		
	\end{enumerate}
\end{proof}

We do immediately get as a corollary that $d_H(X) < d_B(X)$ when $d_H(X_q)$ is larger for some $q \notin F$ than for all $q \in F$. However, this is not a very useful version of the characterization as it stands, because the Hausdorff dimension of $\nu_k(C_q(\mathcal{A})^\omega)$ is not an easy value to compute \textit{a priori.} 
The rest of this section will be focused on reducing the above to an easier problem.

The main step in the process of simplifying the result of Lemma \ref{hausdorff_dimension_cycle_languages} is the following result:

\begin{lem}
\label{hausdorff_omega_limit}
	Let $\mathcal{A}$ be a finite automaton, not necessarily deterministic. 
	Let $L'$ be the $\omega$-language recognized by $\mathcal{A}$ as a B\"uchi automaton, and let $L$ be the language recognized by $\mathcal{A}$ as a finite automaton. 
	Let $X = \nu_k(L')$ and $Y = \nu_k(L^\omega)$. Then $d_H(\overline{X}) \leq d_H(Y)$.
\end{lem}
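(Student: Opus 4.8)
The plan is to sandwich both $d_H(\overline{X})$ and $d_H(Y)$ between multiples of the entropy $h(L)$: the closed‑case results of Section~\ref{closed} will pin $d_H(\overline{X})$ from above, and an explicit family of self‑similar subsets of $\nu_k(L^\omega)$ will bound $d_H(Y)$ from below.

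First I would reduce to the case that $\mathcal{A}$ is trim. Deleting from $\mathcal{A}$ every state that lies on no accepting (B\"uchi) run yields a trim automaton $\mathcal{A}'$ recognizing the same $\omega$-language $L'$, so that $X$, and hence $\overline{X}$, is unchanged; and the finite language $L_0$ of $\mathcal{A}'$ satisfies $L_0 \subseteq L$, since deleting states can only remove accepted finite words. As $\nu_k(L_0^\omega) \subseteq \nu_k(L^\omega) = Y$, it suffices to prove $d_H(\overline{X}) \le d_H(\nu_k(L_0^\omega))$, so I would assume henceforth that $\mathcal{A}$ is trim with finite language $L$. If $\mathcal{A}$ has no states the claim is trivial; otherwise $L$ is infinite, since a trim automaton always contains a cycle through an accept state (follow the nonzero-length paths to accept states guaranteed by trimness and apply pigeonhole on $F$).

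Now, since $\mathcal{A}$ is trim, Lemma~\ref{automaton_closure} gives $\overline{X} = V_k(\overline{\mathcal{A}})$, and $\overline{\mathcal{A}}$ is a closed automaton whose finite language is precisely $L^{pre}$ (a finite string has a run from a start state iff, by finite-trimness, it extends to a word of $L$). Hence by Theorem~\ref{closed_equality} (in the form valid over $[0,1]^d$) and Fact~\ref{lang_entropy_union}, $d_H(\overline{X}) = \tfrac{1}{\log k}h(L^{pre}) = \tfrac{1}{\log k}h(L)$. It thus remains to show $d_H(Y) \ge \tfrac{1}{\log k}h(L)$. Since $h(L) = \limsup_n \tfrac{\log |L|_n}{n}$, for each $\epsilon > 0$ there are infinitely many $n$ with $\tfrac{\log |L|_n}{n} > h(L) - \epsilon$; fix one such $n$ and set $W_n = L \cap \Sigma^n$, a set of $|L|_n$ words of common length $n$. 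Then $W_n^\omega \subseteq L^\omega$, so $\nu_k(W_n^\omega) \subseteq Y$, and $\nu_k(W_n^\omega)$ is the attractor of the iterated function system $\{\, x \mapsto k^{-n}x + \nu_k(w\vec{0}^\omega) : w \in W_n \,\}$, whose maps have as images distinct grid-aligned boxes of side $k^{-n}$ and hence satisfy the open set condition; its Hausdorff dimension is therefore the similarity dimension $\tfrac{\log |W_n|}{n \log k}$ (equivalently, $\nu_k(W_n^\omega)$ is recognized by an obvious closed automaton whose finite language has entropy $\tfrac{\log |W_n|}{n}$, so Theorem~\ref{closed_equality} applies). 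Thus $d_H(Y) \ge \tfrac{1}{\log k}\cdot\tfrac{\log |L|_n}{n} > \tfrac{1}{\log k}(h(L) - \epsilon)$ for every $\epsilon > 0$, giving $d_H(Y) \ge \tfrac{1}{\log k}h(L) = d_H(\overline{X})$.

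I expect the delicate points to be, first, confirming that passing to the trim automaton does not enlarge $\nu_k(L^\omega)$ (it does not, since only finite accepted words are discarded) and that $\overline{\mathcal{A}}$ really is a closed automaton, so that the results of Section~\ref{closed} pin $d_H(\overline{X})$ to $\tfrac{1}{\log k}h(L)$ exactly; and second, the bookkeeping in the self-similar estimate — though there the only substantive point is to use words of a single length $n$, which makes the open set condition automatic.
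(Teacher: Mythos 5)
Your proof is correct, and the lower bound $d_H(Y)\ge \frac{1}{\log k}h(L)$ is obtained by a genuinely different (and more economical) route than the paper's. Both arguments share the same skeleton: reduce to a trim automaton, identify $d_H(\overline{X})$ with $\frac{1}{\log k}h(L^{pre})=\frac{1}{\log k}h(L)$ via Lemma \ref{automaton_closure} and Theorem \ref{closed_equality}, and then exhibit closed subsets of $Y$ whose dimensions approach this value. Where you diverge is in the choice of those subsets. The paper builds, for each $n$, a language $M_n$ by splicing a ``detour'' $v_qu_q$ (out to an accept state and back) into every run of $L^{pre}$ after each block of $n$ characters, checks $\vec{M}_n\subseteq L^\omega$, and then does a fair amount of entropy bookkeeping (including Lemma \ref{language_entropy_prefix_limit_summation} and the $h(N)=h(M)$ comparison) to squeeze $h(M_n^{pre})$ between $\frac{n}{n+\ell}h(M)$ and $h(M)$. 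You instead take the equal-length slices $W_n=L\cap\Sigma^n$ for lengths $n$ nearly realizing the $\limsup$ defining $h(L)$; since all words in $W_n$ have the same length, $\nu_k(W_n^\omega)$ is a bona fide self-similar attractor satisfying the open set condition on grid-aligned boxes, so its dimension $\frac{\log|W_n|}{n\log k}$ comes for free from Moran's theorem (or from Theorem \ref{closed_equality} applied to the obvious block automaton). This sidesteps the insertion gadgets and most of the entropy estimates, at the cost of invoking the standard similarity-dimension formula; the paper's construction stays entirely inside its own entropy machinery and keeps the approximating sets tied to the original automaton's runs, but proves nothing more for this lemma. Two small points worth making explicit if you wrote this up: the identity $h(L^{pre})=h(L)$ (Fact \ref{lang_entropy_union}) is being applied to a regular language of a trim automaton, where it is valid even though the fact as stated for arbitrary infinite languages is delicate; and your trimming step should note, as you do, that only finite accepted words are discarded, so $\nu_k(L_0^\omega)\subseteq Y$ while $\overline{X}$ is untouched.
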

\begin{proof}
	Without loss of generality, assume $\mathcal{A}$ is finite-trim. 
	Let $\mathcal{B}$ be the induced sub-automaton containing states in $\mathcal{A}$ from which there are arbitrarily long paths to accept states. 
	We note that $\mathcal{B}$ is trim and is equivalent as a B\"uchi automaton to $\mathcal{A}$. 
	Assume that the desired lemma holds for $\mathcal{B}$; let $L_\mathcal{B}$ be the language recognized by $\mathcal{B}$ as a finite automaton, and let $Z = \nu_k(L_\mathcal{B}^\omega)$. 
	Then $L_\mathcal{B} \subseteq L$, hence $d_H(Z) \leq d_H(Y)$. 
	So $d_H(\overline{X}) \leq d_H(Y)$ as well; because $\mathcal{A}$ and $\mathcal{B}$ are equivalent as B\"uchi automata, the lemma then holds for $\mathcal{A}$. 
	Thus we have reduced the lemma to the case where the automaton in question is trim.
	
	Starting over, assume $\mathcal{A}$ is trim. 
	Then $\overline{\mathcal{A}}$, as a finite automaton, recognizes the language $M$ of prefixes of $L$. 
	Let $M'$ be the language $\overline{\mathcal{A}}$ recognizes as a B\"uchi automaton.
	
	By the proof of Lemma \ref{automaton_closure}, the language $M'$ is closed, in the sense that if infinitely many of an infinite string $w$'s prefixes are prefixes of strings in $M'$, then $w \in M'$. 
	Therefore $M' = \vec{M}$, as $M$ is the language of prefixes of strings in $M'$.
    Note furthermore that $L' \subseteq M'$. 
    Moreover, any prefix of a string in $M'$ is a string in $M$ and thus a prefix of a string in $L$, and thus, from trimness, a prefix of a string in $L'$; so $\nu_k(\vec{M}) = \overline{X}$.
	
	Let $Q$ be the state set of $\mathcal{A}$ (and $\overline{\mathcal{A}}$). 
	Choose for each $q \in Q$ a string $u_q$ on which $\mathcal{A}$ runs from a start state to state $q$, and choose a string $v_q$ on which $\mathcal{A}$ runs from state $q$ to an accept state. 
	Note that $u_q v_q \in L$. 
	Let $\ell$ be the maximum length of $v_q u_q$ for $q \in Q$.
	
	Observe that because $\overline{X} = \nu_k(\vec{M})$ is closed, and $M$ is closed under prefixes, we have that $d_H(\overline{X}) = \frac{1}{\log k} h(M)$ by Theorem \ref{closed_equality}. 
	Define the language $M_n$ for each positive integer $n$ as follows: for each string in $M$, and for every accepting run of this string in $\mathcal{A}$, after every $n$ characters, we find the index $q$ of the state $\mathcal{A}$ is in and insert $v_q u_q$; the resulting strings are the elements of $M_n$. 
	We note that $M_n$ and $X_n = \nu_k(\vec{M}_n)$ have the following properties:
	
	\begin{itemize}
		\item $\vec{M}_n$ is a subset of $L^\omega$. 
		A string in $\vec{M}_n$ must have the form $w_1 v_{q_1} u_{q_1} w_2 v_{q_2} u_{q_2} w_3 \dots$ with each $w_j$ a string of length $n$. 
		Note that $w_1 v_{q_1} \in L$, and $u_{q_j} w_{j+1} v_{q_{j+1}}$ is in $L$ as well. 
		So this string is in $L^\omega$, and hence $X_n \subseteq Y$.
		\item The set $X_n$ is still closed; observe that by considering every possible path through $L$ of length $n$ and connecting them with strings of states recognizing $v_q u_q$, it is possible to create a closed B\"uchi automaton for $\vec{M}_n$ and apply Lemma \ref{automaton_closure} again. 
		Therefore, $d_H(X_n) = \frac{1}{\log k} h(M_n^{pre})$ by Theorem \ref{closed_equality}.
		
		\item Let $r$ be a positive integer; every string of length at most $rn$ in $M$ has a corresponding string of length at most $r(n+\ell)$ in $M_n^{pre}$. 
		Thus $|M_n^{pre}|_{\leq r(n+\ell)} \geq |M|_{\leq rn}$, and so:
		
		$$h(M_n^{pre}) = \limsup_{r \to \infty} \frac{\log |M_n^{pre}|_{\leq r(n+\ell)}}{r(n+\ell)}$$
		$$\geq \limsup_{r \to \infty} \frac{\log |M|_{\leq rn}}{r(n+\ell)}$$
		$$= \frac{n}{n+\ell} \limsup_{r \to \infty} \frac{\log |M|_{\leq rn}}{rn}.$$
		
		Now, without loss of generality, assume our alphabet does not contain the character $\$$, and let $N = M\$^*$. 
		Observe that $N$ is still prefix-closed and that $|M|_{\leq n} = |N|_n$ for all $n$. Then the above expression is equal to $\frac{n}{n+\ell} h(N)$, because from Lemma \ref{language_entropy_prefix_limit_summation}, we know that the limit superior defining $h(N)$ is a limit (thus it does not matter if we take a subsequence of the indices).
		
		We would then like to replace $N$ with $M$ in this equation, hence we show $h(N) = h(M)$. 
		Note that because $\overline{\mathcal{A}}$ is trim, every string in $M$ can be extended to a longer string in $M$; and because $M$ is prefix-closed, said longer string can have any length. 
		Therefore $|M|_n$ is monotone in $n$, and so $|N|_n = |M|_{\leq n} \leq (n+1) |M|_n$. Conversely we trivially have $|M|_n \leq |N|_n$. 
		Since multiplication by a linear factor does not change the entropy, we have $h(N) = h(M)$ as required.
		
		\item Let $r$ be a positive integer. 
		Each string in $M$ corresponds to at most $|Q|(\ell+1)$ strings in $M_n^{pre}$ because, at worst, the length of the original string is a multiple of $n$, and thus $M_n^{pre}$ has $|Q|(\ell+1)$ corresponding strings depending on which state the string ends in (which could, due to nondeterminism, be any of them) and how much of the last $v_q u_q$ is added. 
		These corresponding strings are never shorter than the original string; therefore, $|M_n^{pre}|_{\leq r} \leq |Q|(\ell + 1) |M|_{\leq r}$. Thus:
		
		$$h(M_n^{pre}) = \limsup_{r \to \infty} \frac{\log |M_n^{pre}|_{\leq r}}{r}$$
		$$\leq \limsup_{r \to \infty} \frac{\log (|Q|(\ell + 1) |M|_{\leq r})}{r}$$
		$$= \limsup_{r \to \infty} \frac{\log |Q|(\ell + 1) + \log |M|_{\leq r}}{r}$$
		$$= \limsup_{r \to \infty} \frac{\log |Q|(\ell + 1)}{r} + \limsup_{r \to \infty} \frac{\log |M|_{\leq r}}{r}$$
		$$= \limsup_{r \to \infty} \frac{\log |M|_{\leq r}}{r} = h(M).$$
	\end{itemize}
	
	We thus have a collection of subsets $X_n$ of $Y$ such that:
	
	$$\frac{n}{n+\ell} d_H(\overline{X}) = \frac{1}{\log k} \frac{n}{n+\ell} h(M) \leq \frac{1}{\log k} h(M_n^{pre}) = d_H(X_n) \leq \frac{1}{\log k} h(M) = d_H(\overline{X}).$$
	
	Because $\frac{n}{n + \ell} \to 1$ as $n \to \infty$, we conclude that $d_H(X_n) \to d_H(\overline{X})$. 
	Therefore, $d_H(Y) \geq \sup d_H(X_n) \geq d_H(\overline{X})$, as required.
\end{proof}

\begin{cor}
\label{dimension_equiv_cycle}
	Let $\mathcal{A}$ be a B\"uchi automaton recognizing an $\omega$-language $L'$. 
	Assume that $\mathcal{A}$ is trim and has one accept state and one start state, which are the same state. 
	Let $L$ be the language recognized by $\mathcal{A}$ as a finite automaton. 
	Then $L' = L^\omega$, and $d_H(\nu_k(L')) = d_B(\nu_k(L'))$.
\end{cor}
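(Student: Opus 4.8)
The plan is to first establish the identity $L' = L^\omega$ by a direct run-decomposition argument, and then to deduce the equality of the two dimensions by sandwiching $d_H(\nu_k(L'))$ between itself and the Hausdorff dimension of its closure, invoking Lemma \ref{hausdorff_omega_limit}, Lemma \ref{automaton_closure}, and Theorem \ref{closed_equality}.

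For the identity $L' = L^\omega$, write $q$ for the unique state that is both the start state and the accept state. If $w \in \Sigma^\omega$ is accepted by $\mathcal{A}$ as a B\"uchi automaton, fix an accepting run $q = q_0, q_1, q_2, \dots$; since $q$ is the only accept state, the set of indices $i$ with $q_i = q$ is infinite, say $0 = i_0 < i_1 < i_2 < \cdots$. Cutting $w$ at these indices expresses $w$ as an infinite concatenation of nonempty finite words, each of which carries a run of $\mathcal{A}$ from $q$ to $q$ and hence lies in $L$; so $w \in L^\omega$. Conversely, concatenating runs from $q$ to itself witnessing membership in $L$ produces a run that visits $q$ infinitely often, so $L^\omega \subseteq L'$. (If $\epsilon \in L$, one reads $L^\omega$ as the set of infinite concatenations of words of $L$, which causes no trouble here.)

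For the dimension equality, set $X = \nu_k(L') = \nu_k(L^\omega)$. Apply Lemma \ref{hausdorff_omega_limit} to $\mathcal{A}$: as a finite automaton it recognizes $L$, and as a B\"uchi automaton it recognizes $L' = L^\omega$, so in the notation of that lemma one has $Y = \nu_k(L^\omega) = X$, whence $d_H(\overline{X}) \leq d_H(X)$. Since trivially $d_H(X) \leq d_H(\overline{X})$ by monotonicity, we get $d_H(X) = d_H(\overline{X})$. Because $\mathcal{A}$ is trim, Lemma \ref{automaton_closure} gives $\overline{X} = V_k(\overline{\mathcal{A}})$ with $\overline{\mathcal{A}}$ closed, so Theorem \ref{closed_equality} yields $d_H(\overline{X}) = d_B(\overline{X})$; and $d_B(\overline{X}) = d_B(X)$ always holds. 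Chaining these equalities gives $d_H(X) = d_B(X)$, as required.

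I do not expect a genuine obstacle here: the real content is carried by the earlier lemmas and by Theorem \ref{closed_equality}. The only points needing care are the bookkeeping in the run-decomposition (nonemptiness of the segments between consecutive visits to $q$, and the $\epsilon \in L$ edge case) and verifying that the hypotheses of the cited results genuinely hold in this setting — trimness for Lemma \ref{automaton_closure}, closedness of $\overline{\mathcal{A}}$ for Theorem \ref{closed_equality}, and nonemptiness and boundedness of $X$ for the box-counting dimension to make sense — all of which are immediate from the assumption that $\mathcal{A}$ is trim with coinciding unique start and accept state.
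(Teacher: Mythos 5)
Your proposal is correct and follows essentially the same route as the paper: the run-decomposition at visits to the unique start/accept state gives $L' = L^\omega$, and then Lemma \ref{hausdorff_omega_limit} (applied with $Y = \nu_k(L^\omega) = X$) combined with Theorem \ref{closed_equality} and the closure-invariance of box-counting dimension yields $d_H(X) = d_B(X)$. The extra care you take with the $\epsilon \in L$ edge case and with verifying the hypotheses of the cited lemmas is sound but adds nothing beyond the paper's argument.
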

\begin{proof}
	Let $X = \nu_k(L')$. From Theorem \ref{closed_equality}, we know $d_H(\overline{X}) = d_B(\overline{X}) = d_B(X)$, so it suffices to show $d_H(X) = d_H(\overline{X})$; in fact, we need only show $d_H(\overline{X}) \leq d_H(X)$.
	
	Note that if an infinite string $w$ belongs to $L'$, it must have a run that starts at the single start/accept state and revisit it infinitely many times; hence, $w$ can be broken into infinitely many substrings, each of which starts and ends at this state. 
	So $L' = L^\omega$. The second result then follows from Lemma \ref{hausdorff_omega_limit}.
\end{proof}

\begin{lem}
\label{dimension_entropy_cycle}
    Let $L = C_q(\mathcal{A})$ be a cycle language for a B\"uchi automaton. 
    Then $d_H(\nu_k(L^\omega)) = \frac{1}{\log k} h(L)$.
\end{lem}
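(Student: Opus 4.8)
The plan is to reduce the claim to the equality of Hausdorff and box-counting dimension for sets of the form $\nu_k(L^\omega)$ established in Corollary \ref{dimension_equiv_cycle}, and then to identify the box-counting dimension with $\tfrac{1}{\log k}h(L)$ via Lemma \ref{box_counting_entropy} together with a short combinatorial computation.

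First I would set up the automaton. By Definition \ref{cycle}, $L = C_q(\calA)$ is exactly the language recognized \emph{as a finite automaton} by the cycle automaton $\calC_q(\calA)$, which is a trim B\"uchi automaton whose unique start state and unique accept state are both $q$. (Here I assume the nontrivial case in which $q$ lies on a cycle of positive length, so that $C_q(\calA)$ is infinite and $h(L)$ is defined; if $C_q(\calA) = \{\epsilon\}$, then $L^\omega = \varnothing$ and the statement is vacuous.) Corollary \ref{dimension_equiv_cycle} then applies with $\calC_q(\calA)$ in the role of the ambient automaton: the $\omega$-language it recognizes as a B\"uchi automaton is $L^\omega$, and $d_H(\nu_k(L^\omega)) = d_B(\nu_k(L^\omega))$.

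Next I would compute the box-counting side. Lemma \ref{box_counting_entropy} gives $d_B(\nu_k(L^\omega)) = \tfrac{1}{\log k}\,h\big((L^\omega)^{pre}\big)$, so it suffices to prove $h\big((L^\omega)^{pre}\big) = h(L)$. The key observation is that $L = C_q(\calA)$ is closed under concatenation and contains the empty word (concatenating two runs from $q$ to $q$ gives another run from $q$ to $q$, and the empty run works), so $L^n \subseteq L$ for all $n \geq 1$. From this one checks $(L^\omega)^{pre} = L^{pre}$: every prefix of some $w_1 w_2 \cdots$ with $w_i \in L$ has the form $w_1 \cdots w_m v'$ with $v'$ a prefix of $w_{m+1}$, hence is a prefix of $w_1 \cdots w_{m+1} \in L$; and conversely every $v \in L^{pre}$, being a prefix of some $u \in L$, is a prefix of $u z^\omega \in L^\omega$ for any nonempty $z \in L$, which exists since $L$ is infinite. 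Finally, Fact \ref{language_entropy_summation} gives $h(L^{pre}) = h(L)$, so $d_H(\nu_k(L^\omega)) = d_B(\nu_k(L^\omega)) = \tfrac{1}{\log k}\,h(L^{pre}) = \tfrac{1}{\log k}\,h(L)$.

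The main point requiring care is a bookkeeping one rather than a deep obstacle: one must confirm that the B\"uchi-trimming in Definition \ref{cycle} does not change the finite-automaton language $C_q(\calA)$ — it does not, since every state lying on a positive-length path from $q$ to $q$ is reachable from $q$ and admits a positive-length path back to $q$ — and one must treat the degenerate case $C_q(\calA) = \{\epsilon\}$ separately, where the conclusion carries no content because $h(L)$ is undefined. Apart from that, the argument is a direct chain of Corollary \ref{dimension_equiv_cycle}, Lemma \ref{box_counting_entropy}, the identity $(L^\omega)^{pre} = L^{pre}$, and Fact \ref{language_entropy_summation}.
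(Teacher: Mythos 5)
Your proposal is correct and follows essentially the same route as the paper's proof: reduce to box-counting dimension via Corollary \ref{dimension_equiv_cycle}, convert to entropy via Lemma \ref{box_counting_entropy}, establish $(L^\omega)^{pre} = L^{pre}$, and conclude with Fact \ref{language_entropy_summation}. Your justification of the prefix identity (via concatenation-closure of $L$) and your explicit handling of the trimming and degenerate cases are minor, harmless variations on the paper's argument.
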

\begin{proof}
    Let $L'$ be the language $\calC_q(\mathcal{A})$ accepts as a B\"uchi automaton. 
    Then by Corollary \ref{dimension_equiv_cycle}, it suffices to show $d_B(\nu_k(L')) = \frac{1}{\log k} h(L)$. 
    Moreover, by Lemma \ref{box_counting_entropy}, it suffices to show $h((L')^{pre}) = h(L)$, where $(L')^{pre}$ is the language of prefixes of strings in $L'$.
    
    We claim that $(L')^{pre} = L^{pre}$. 
    We have $L^{pre} \subseteq (L')^{pre}$ because $\calC_q(\mathcal{A})$ is trim, so any string that can be extended to a string in $L^{pre}$ can be further extended to an infinite string in $(L')^{pre}$. 
    We have $(L')^{pre} \subseteq L^{pre}$ because any string in $L'$ must have infinitely many prefixes in $L$, so we may extend a string in $(L')^{pre}$ to an infinite string and then cut it off at a sufficiently late occurrence of an accept state.
    
    So we have reduced the lemma to demonstrating that $h(L) = h(L^{pre})$, which is Fact \ref{language_entropy_summation}.
\end{proof}

Combining the above with Lemma \ref{hausdorff_dimension_cycle_languages} gives us the theorem:

\begin{thm}
\label{hausdorff_dimension_cycle_language_entropy}
    Let $\mathcal{A} = (Q, \Sigma, \delta, S, F)$ be a trim B\"uchi automaton, and let $X = V_k(\mathcal{A})$. Let $F$ be the set of indices of accept states in $\mathcal{A}$. Then:
	
	\begin{enumerate}[(i)]
		\item $d_H(X) = \frac{1}{\log k} \max_{q \in F} h(C_q(\mathcal{A}))$;
		\item $d_B(X) = \frac{1}{\log k} \max_{q \in Q} h(C_q(\mathcal{A}))$.
	\end{enumerate}
\end{thm}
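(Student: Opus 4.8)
The statement is now a matter of assembling the machinery built in this section; essentially all of the real work has already been done in Lemmas \ref{hausdorff_dimension_cycle_languages}, \ref{hausdorff_omega_limit}, and \ref{dimension_entropy_cycle}, together with Corollary \ref{dimension_equiv_cycle}. The plan is to first pin down the value of $d_H(X_q)$ for each individual state $q$, where $X_q = V_k(\calC_q(\mathcal{A}))$, and then feed that into Lemma \ref{hausdorff_dimension_cycle_languages}.

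First I would fix $q \in Q$ and examine the cycle automaton $\calC_q(\mathcal{A})$. By Definition \ref{cycle}, this automaton is trim and has a single state that is simultaneously its only start state and its only accept state; moreover, the finite language it recognizes is exactly $C_q(\mathcal{A})$. Hence Corollary \ref{dimension_equiv_cycle} applies to $\calC_q(\mathcal{A})$: the $\omega$-language $L'$ it recognizes as a B\"uchi automaton equals $C_q(\mathcal{A})^\omega$, and $d_H(\nu_k(L')) = d_B(\nu_k(L'))$, i.e. $d_H(X_q) = d_B(X_q)$. Then Lemma \ref{dimension_entropy_cycle} identifies this common value: $d_H(\nu_k(C_q(\mathcal{A})^\omega)) = \tfrac{1}{\log k} h(C_q(\mathcal{A}))$. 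Combining, $d_H(X_q) = \tfrac{1}{\log k} h(C_q(\mathcal{A}))$ for every $q \in Q$.

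Next I would invoke Lemma \ref{hausdorff_dimension_cycle_languages}, which gives $d_H(X) = \max_{q \in F} d_H(X_q)$ and $d_B(X) = \max_{q \in Q} d_H(X_q)$. Substituting the formula just obtained for $d_H(X_q)$, and using that $Q$ (and $F$) are finite so that the constant $\tfrac{1}{\log k}$ may be pulled outside the maximum, yields
\[
d_H(X) = \frac{1}{\log k} \max_{q \in F} h(C_q(\mathcal{A})), \qquad d_B(X) = \frac{1}{\log k} \max_{q \in Q} h(C_q(\mathcal{A})),
\]
which are precisely (i) and (ii).

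There is no substantial obstacle remaining at this stage — the difficulty was front-loaded into Lemma \ref{hausdorff_omega_limit} (the inequality $d_H(\overline{X}) \le d_H(\nu_k(L^\omega))$, which powers Corollary \ref{dimension_equiv_cycle}) and into the closed-case equality of Theorem \ref{closed_equality}. The only points warranting a sentence of care here are checking that $\calC_q(\mathcal{A})$ genuinely meets the hypotheses of Corollary \ref{dimension_equiv_cycle} (trimness, and that the unique start state coincides with the unique accept state), and observing that the cycle language $C_q(\mathcal{A})$ does not depend on the accepting structure of $\mathcal{A}$, so that the same computation applies uniformly to states inside and outside $F$.
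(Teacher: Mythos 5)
Your proposal is correct and follows exactly the route the paper intends: the paper's entire proof is the one-line remark that Theorem \ref{hausdorff_dimension_cycle_language_entropy} follows by combining Lemma \ref{dimension_entropy_cycle} (itself resting on Corollary \ref{dimension_equiv_cycle}) with Lemma \ref{hausdorff_dimension_cycle_languages}, which is precisely the assembly you carry out. Your added care about verifying the hypotheses of Corollary \ref{dimension_equiv_cycle} for $\calC_q(\mathcal{A})$ and noting that $C_q(\mathcal{A})$ is independent of the accepting structure is a welcome elaboration of what the paper leaves implicit.
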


\begin{cor}
Let $\calA$ be a trim B\"uchi automaton such that $V_k(\calA) = X$. 
Then $d_H(X)<d_B(X) = h(X)$ if and only if there exists a non-final state $q \in Q \setminus F$ such that for cofinitely many $m \in \N$ there exists $n_m \in \N$ such that for each $f \in F$, the ratio of $|C_f(\calA)|_m$, which denotes the number of paths from $f$ to itself of length $m$, to $k^m$ is strictly less than than the ratio of $|C_q(\calA)|_{n_m}$ to $k^{n_m}$.
\begin{proof}
For the forward implication, we will illustrate the contrapositive; suppose that for every non-final state $q \in Q \setminus F$ there is a final state $f \in F$ such that $\limsup_{n \to \infty} |\{ \sigma:q \to q \: | \: \sigma \text{ is a path of length }n\}| \leq \limsup_{n \to \infty} |\{ \sigma:f \to f \: | \: \sigma \text{ is a path of length }n\}|$.
If this is the case, we conclude that for all $q \in Q$ we have $h(C_q(\calA))\leq h(C_f(\calA))$ for some $f \in F$, where $C_q(\calA)$ is the cycle language of $q$ as defined in \ref{cycle}.
This follows because taking logarithms commutes with $\limsup$.
By Theorem \ref{hausdorff_dimension_cycle_language_entropy}, we conclude that $d_B(X)=d_H(X)$, proving the contrapositive.

For the backwards implication, we suppose that there does exist some $q \in Q \setminus F$ such that such that for cofinitely many $m \in \N$ there exists $n_m \in \N$ such that $\frac{|C_f(\calA)|_m}{k^m}$ (the ratio of paths from $f$ to itself of length $m$ to $k^m$) is strictly less than than $\frac{|C_q(\calA)|_{n_m}}{k^{n_m}}$ (the ratio of paths from $q$ to itself of length $n_m$ to $k^{n_m}$) for each $f \in F$.
Then it is evident that the sequence $(n_m)_{m \in \N}$ witnesses that for all $f \in F$ we have $\limsup_{n \to \infty}\frac{|C_f|_n}{k^n} < \limsup_{n \to \infty}\frac{|C_q(\calA)|_n}{k^n}$.
Taking the logarithm of each side, we conclude $h(C_f(\calA))<h(C_q(\calA))$ for all $f \in F$.
Hence by Theorem \ref{hausdorff_dimension_cycle_language_entropy}, we get that $d_H(X) < d_B(X)$.
\end{proof}
\end{cor}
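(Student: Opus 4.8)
The plan is to read the corollary off of Theorem~\ref{hausdorff_dimension_cycle_language_entropy} and then translate the resulting entropy comparison into the stated condition on cycle-length counts. By that theorem $d_B(X)=\frac{1}{\log k}\max_{q\in Q}h(C_q(\calA))$ --- which is the relation between box-counting dimension and entropy already recorded in Lemma~\ref{box_counting_entropy}, so ``$d_B(X)=h(X)$'' is only a restatement and carries no new content --- while $d_H(X)=\frac{1}{\log k}\max_{f\in F}h(C_f(\calA))$. Hence $d_H(X)<d_B(X)$ holds exactly when $\max_{q\in Q}h(C_q(\calA))>\max_{f\in F}h(C_f(\calA))$, equivalently when there is a state $q$ with $h(C_q(\calA))>h(C_f(\calA))$ for every $f\in F$; such a $q$ is necessarily outside $F$, since a final $q$ would already be among the states indexing the right-hand maximum. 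So it suffices to show that, for $q\in Q\setminus F$, the condition in the statement is equivalent to $h(C_q(\calA))>\max_{f\in F}h(C_f(\calA))$.

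To do that I would pass to the identity $\frac{1}{n}\log\bigl(|C_q(\calA)|_n/k^n\bigr)=\frac{\log|C_q(\calA)|_n}{n}-\log k$, so that $h(C_q(\calA))-\log k=\limsup_{n\to\infty}\frac{1}{n}\log\bigl(|C_q(\calA)|_n/k^n\bigr)$, and similarly for each $f\in F$; in the one-dimensional setting at hand $|C_q(\calA)|_n\le k^n$, so all these rates are at most $0$. For the forward implication put $\gamma=\max_{f\in F}h(C_f(\calA))<h(C_q(\calA))$ and fix $\epsilon>0$ with $2\epsilon<h(C_q(\calA))-\gamma$. Since $F$ is finite and each $h(C_f(\calA))$ is a limit superior, there is $m_0$ with $|C_f(\calA)|_m/k^m<e^{m(\gamma-\log k+\epsilon)}$ for all $m\ge m_0$ and all $f\in F$ simultaneously; since $h(C_q(\calA))$ is a limit superior, there are infinitely many $n$ with $|C_q(\calA)|_n/k^n>e^{n(h(C_q(\calA))-\log k-\epsilon)}$. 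Because the exponential rate for $q$ strictly exceeds that of every $f$ and both are $\le 0$, comparing exponents shows that a single fixed such index $n$ --- the least one, say --- satisfies $e^{n(h(C_q(\calA))-\log k-\epsilon)}\ge e^{m(\gamma-\log k+\epsilon)}$ for all sufficiently large $m$; taking $n_m$ to be that fixed index then yields $|C_q(\calA)|_{n_m}/k^{n_m}>|C_f(\calA)|_m/k^m$ for every $f\in F$ and every large $m$, which is the asserted condition. For the converse I would argue that a sequence of witnesses $(n_m)_m$ forces, for each fixed $f$, the $q$-side of the comparison to dominate the $f$-side at an exponential rate; dividing the logarithm by the index and using the control of these limits superior provided by Fact~\ref{lang_entropy_union} and Lemma~\ref{language_entropy_prefix_limit} (passing through the prefix languages $C_q(\calA)^{pre}$ and $C_f(\calA)^{pre}$, which are regular and prefix-closed and share the entropies of $C_q(\calA)$ and $C_f(\calA)$) then returns $h(C_f(\calA))<h(C_q(\calA))$, and since $F$ is finite this is the entropy inequality.

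The step I expect to be the main obstacle is exactly this bookkeeping with limits superior. In the forward direction one must beat \emph{all} of the finitely many final-state ratios simultaneously and for \emph{cofinitely many} $m$, even though $|C_q(\calA)|_n/k^n$ is only guaranteed to be large along a (possibly sparse, and possibly periodic) subsequence of $n$; the remedy is that, because the exponential growth rate attached to $q$ strictly dominates those of all $f\in F$, one may choose $n_m$ to be an index independent of $m$, and carefully tracking the signs of the exponents when comparing them is then the one genuinely computational point. The converse requires the corresponding care in extracting a \emph{strict} inequality of entropies from a comparison of the normalized counts themselves, which is where the precise meaning of the ``ratios to $k^{n}$'' in the statement and the length-normalization built into the definition of entropy must be used in tandem.
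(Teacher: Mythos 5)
Your overall route is the same as the paper's: both reduce the corollary to Theorem~\ref{hausdorff_dimension_cycle_language_entropy} and then translate $\max_{q\in Q} h(C_q(\calA)) > \max_{f\in F} h(C_f(\calA))$ into the stated comparison of normalized cycle counts. Your forward direction is in fact carried out more explicitly than the paper's (which disposes of it in one line by contrapositive): the $\epsilon$-bookkeeping is right, the observation that the witness $n_m$ may be taken to be a single fixed index is correct, and the sign analysis using $h(C_q(\calA))\le\log k$ in the unary-alphabet setting goes through.

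The gap is the converse, which you leave as a one-sentence sketch, and the sketch's central claim does not follow. The hypothesis only provides, for each large $m$, \emph{some} length $n_m$ (unrelated to $m$) at which the $q$-ratio exceeds every $f$-ratio at length $m$; this is a pointwise comparison of numbers, not of exponential growth rates, and it does not force $h(C_f(\calA)) < h(C_q(\calA))$. Concretely, since $C_q(\calA)^* = C_q(\calA)$ the empty string lies in $C_q(\calA)$, so one may take $n_m = 0$ and the $q$-ratio equals $1$; the condition is then met as soon as $|C_f(\calA)|_m/k^m < 1$ for all $f$ and all large $m$, which is perfectly compatible with $h(C_f(\calA)) = \log k = h(C_q(\calA))$ (take $|C_f(\calA)|_m$ of order $k^m/m$, say). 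So ``dividing the logarithm by the index'' cannot return a \emph{strict} entropy inequality without substantially more input. To be fair, this is exactly where the paper's own proof is thinnest: it asserts that the witnesses yield $\limsup_n |C_f(\calA)|_n/k^n < \limsup_n |C_q(\calA)|_n/k^n$ and then identifies the logarithm of such a limsup with the entropy, neither of which is justified. You have therefore correctly located the genuine difficulty, but your proposal does not resolve it.
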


\section{Hausdorff measure}\label{measure}

As mentioned in Section \ref{Hdim}, the Hausdorff dimension of a fractal $X$ is defined by considering the $s$-dimensional Hausdorff \textit{measure} $\mu_H^s(X)$ for different values of $s$, and in particular, $\mu_H^s(X)$ is $\infty$ for $s < d_H(X)$ and $0$ for $s > d_H(X)$. 
In this section, we will give methods for determining the $d_H(X)$-dimensional Hausdorff measure of various types of $k$-automatic fractal $X$ and make some observations that result from these methods. 
Note that this measure may be zero, a positive real number, or infinite.

We begin by leveraging former work of Merzenich and Staiger. 
The following comprises Lemma 15 and Procedure 1 of \cite{MS94}:

\begin{fact}
	\label{hausdorff_measure_sc_closed}
	Let $\mathcal{A}$ be a strongly connected deterministic B\"uchi automaton such that $X = V_k(\mathcal{A})$ is closed. 
	Then there is exactly one vector $\vec{u}$ such that $\vec{u}$ is an eigenvector corresponding to an eigenvalue of maximum magnitude of $\mathbf{M}(\mathcal{A}, 0)$ and such that $\vec{u}$ contains nonnegative entries, the maximum of which is $1$. 
	The Hausdorff measure $\mu_H^{d_H(X)}(X)$ is the entry in $\vec{u}$ corresponding to the start state of $\mathcal{A}$.
\end{fact}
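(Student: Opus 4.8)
The plan is to read the statement off the Mauldin--Williams theory of graph-directed constructions, in the same spirit as the closed-case results of Section~\ref{closed}, and then add a layer of Perron--Frobenius. First I would recall the graph-directed iterated function system $\calG_\calA$ attached to $\calA$: one vertex per state, one edge $q \to q'$ for each letter $\sigma \in [k]^d$ with $q' \in \delta(q,\sigma)$, carrying the ratio-$1/k$ similarity $x \mapsto (x+\sigma)/k$, and $V_k(\calA) = \bigcup_q K_q$, where $K_q$ is the component of the attractor rooted at $q$ (so $K_{q_0} = V_k(\calA)$ for the start state $q_0$). Since $\calA$ is deterministic, $\calG_\calA$ satisfies the open set condition by the theorem to that effect in Section~\ref{closed} (which needs only that $\calA$ is deterministic); after translating $\calG_\calA$ into a true digraph via Definition~\ref{dg} when $\calA$ is not already simple, it becomes an instance of a geometric graph directed construction in the sense of \cite{MW88}, and strong connectedness makes its incidence matrix irreducible. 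Under these hypotheses \cite{MW88} supplies the two facts I will build on: all $K_q$ share the common Hausdorff dimension $\alpha = d_H(X)$, which is the unique exponent with $\sprad \mathbf{M}(\calA,\alpha) = 1$; and, crucially, $0 < \mu_H^\alpha(K_q) < \infty$ for every $q$.

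Next I would derive the linear constraint on the vector $\vec u = (\mu_H^\alpha(K_q))_q$. Applying $\mu_H^\alpha$ to $K_q = \bigcup_e f_e[K_{t(e)}]$ (union over edges out of $q$), using that the distinct cylinder images are essentially disjoint --- the content of the open set condition here --- and that each $f_e$ scales $\alpha$-dimensional Hausdorff measure by $k^{-\alpha}$, one gets
$$\mu_H^\alpha(K_q) = k^{-\alpha}\sum_{q'} n_{q,q'}\,\mu_H^\alpha(K_{q'}),$$
where $n_{q,q'}$ counts the letters $\sigma$ with $q' \in \delta(q,\sigma)$. So $\vec u$ is a nonnegative eigenvector, with eigenvalue $k^\alpha$, of the letter-multiplicity matrix; on the digraph model this is the adjacency matrix $\mathbf{M}(\calA,0)$, which has the same eigenvectors as $\mathbf{M}(\calA,\alpha) = k^{-\alpha}\mathbf{M}(\calA,0)$ and whose largest eigenvalue is therefore $k^\alpha$ (the spectral radius of $\mathbf{M}(\calA,\alpha)$ being $1$). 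Now Perron--Frobenius applies: $\mathbf{M}(\calA,0)$ is irreducible, so its spectral radius is an eigenvalue with a one-dimensional strictly positive eigenspace, and it is the only eigenvalue of maximal modulus admitting a nonnegative eigenvector. Hence there is exactly one nonnegative eigenvector, for an eigenvalue of maximal modulus, whose largest entry equals $1$ --- this is the vector $\vec u$ of the statement, and $\mu_H^\alpha(K_\bullet)$ is a positive scalar multiple of it.

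It remains to fix that scalar, i.e.\ to show $\max_q \mu_H^\alpha(K_q) = 1$, for then the start-state entry of $\vec u$ is exactly $\mu_H^\alpha(K_{q_0}) = \mu_H^\alpha(X)$. This is where the geometry of the unit cube and of $k$-adic covers enters, through a two-sided estimate on each $\mu_H^\alpha(K_q)$: the lower bound is the usual Mauldin--Williams mass-distribution argument using the entries of the normalized eigenvector as weights, and the upper bound comes from covering $K_q$, at each level $n$, by the $k$-adic subboxes of $[0,1]^d$ of sidelength $k^{-n}$ that meet it and summing $\alpha$-th powers of their diameters, a sum driven by exactly the eigenvector entries via the recurrence above. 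I expect this normalization step --- pinning the constant to exactly $1$, which is delicate because the value (unlike the dimension) is sensitive to whether one measures with arbitrary covering sets, balls, or $k$-adic boxes, as flagged after the definition of $\mu_H^s$ --- together with importing the positivity and finiteness input of \cite{MW88}, to be the main obstacle; the eigenvector bookkeeping and the essential-disjointness step are routine by comparison.
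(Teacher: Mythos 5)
The paper offers no proof of this statement: it is imported wholesale as Lemma 15 and Procedure 1 of \cite{MS94}, so there is no in-paper argument to compare yours against. On its own terms, your skeleton is the standard and correct route, and essentially the one Merzenich and Staiger (and, in the GDIFS language, Mauldin and Williams) take: use \cite{MW88} for $0<\mu_H^\alpha(K_q)<\infty$ and $\sprad \mathbf{M}(\calA,\alpha)=1$, derive the linear relation $\mu_H^\alpha(K_q)=k^{-\alpha}\sum_{q'} n_{q,q'}\,\mu_H^\alpha(K_{q'})$ (the $\geq$ direction needs the overlaps of distinct cylinder images to be $\mu_H^\alpha$-null, which is part of the \cite{MW88} machinery rather than a formal consequence of the open set condition alone --- worth saying explicitly), and then apply Perron--Frobenius to the irreducible nonnegative matrix to get uniqueness of the normalized nonnegative eigenvector of maximal eigenvalue.

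Two caveats, both at places you partly flag yourself. First, the normalization $\max_q \mu_H^\alpha(K_q)=1$ is not a finishing touch but the entire content of the Fact's last sentence; everything before it only shows the measure vector is a positive scalar multiple of $\vec u$. Your two-sided estimate is the right idea, but the calibration to exactly $1$ genuinely uses that $X\subseteq[0,1]$ (the paper's $V_k$ is defined for alphabet $[k]$, so $d=1$): for intervals, diameter equals length, so level-$n$ $k$-adic cylinder covers compute $\mu_H^\alpha$ with no dimensional constant, and the recurrence pins the limsup of $\sum (k^{-n})^\alpha$ at the eigenvector entries. Your sketch is phrased for $k$-adic subboxes of $[0,1]^d$, where the diameter of a box is $\sqrt{d}$ times its side and the exact constant $1$ is lost; as written the normalization step would not close in $d>1$. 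Second, your eigenvector identity exhibits $\vec u$ as a Perron eigenvector of the letter-multiplicity matrix $(n_{q,q'})$, i.e.\ of $k\mathbf{M}(\calA,1)$, which agrees with $\mathbf{M}(\calA,0)$ only when $n_{q,q'}\in\{0,1\}$; passing to $\calA_{dg}$ repairs this, but then the eigenvector is indexed by states of $\calA_{dg}$ and you still owe the translation back to ``the entry corresponding to the start state of $\calA$.'' Neither point is a wrong turn, but both must be filled in before this counts as a proof rather than an outline.
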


Note that in particular, this implies $0 \leq \mu_H^{d_H(X)}(X) \leq 1$ in this case.

The next step in this analysis is to extend this work to the case of a general strongly connected automaton. 
In particular, we might rightfully suspect that the Hausdorff dimension and measure of a set recognized by a strongly connected B\"uchi automaton are the same as those of the set's closure. 
It is most convenient to show this first for the dimensions and then for the measures.

\begin{lem}
\label{strongly_connected_hausdorff_closure}
    If $\calA$ is a strongly connected B\"uchi automaton, then $d_H(V_k(\calA)) = d_H(\overline{V_k(\calA)})$.
\end{lem}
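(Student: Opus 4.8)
The plan is to exploit the cycle-language machinery of Section \ref{dimensions} together with Lemma \ref{automaton_closure}. Since $\calA$ is strongly connected, every state is both accessible and co-accessible; in particular $\calA$ is trim (every state has a nonzero-length path to any accept state, using strong connectedness), so Lemma \ref{automaton_closure} applies and gives $\overline{V_k(\calA)} = V_k(\overline{\calA})$, where $\overline{\calA}$ is the closure of $\calA$ obtained by declaring every state accepting. Thus it suffices to compare $d_H(V_k(\calA))$ with $d_H(V_k(\overline{\calA}))$.

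Next I would invoke Theorem \ref{hausdorff_dimension_cycle_language_entropy}. For $\calA$ we get $d_H(V_k(\calA)) = \frac{1}{\log k}\max_{q \in F} h(C_q(\calA))$, and for $\overline{\calA}$ (all states accepting) we get $d_H(V_k(\overline{\calA})) = \frac{1}{\log k}\max_{q \in Q} h(C_q(\calA))$; note the cycle language $C_q(\calA)$ depends only on the transition structure, not on which states are accepting, so these are the \emph{same} languages $C_q$. The inequality $d_H(V_k(\calA)) \le d_H(V_k(\overline{\calA}))$ is then immediate since $F \subseteq Q$. The heart of the matter is the reverse inequality, i.e. showing that the maximum over all states is attained (up to equality of entropies) at an accept state — equivalently, $h(C_q(\calA)) \le \max_{f \in F} h(C_f(\calA))$ for every $q \in Q$.

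This is where strong connectedness does the real work, and I expect it to be the main obstacle. Fix any $q \in Q$ and any accept state $f \in F$ (which exists, since $\calA$ as a B\"uchi automaton accepts something, being trim). By strong connectedness there are finite strings $u$ (a path from $f$ to $q$) and $v$ (a path from $q$ to $f$). Given any cycle $w \in C_q(\calA)$, the string $u w v$ is a cycle at $f$, so $u \cdot C_q(\calA) \cdot v \subseteq C_f(\calA)$; more generally $u \cdot (C_q(\calA))^m \cdot v \subseteq C_f(\calA)$ for every $m$. Counting length-$n$ strings: a fixed-length prefix/suffix contributes only a bounded multiplicative distortion and an additive shift $|u|+|v|$ to the length, and concatenating $m$ independent cycles multiplies the count, so one obtains $|C_f(\calA)|_{mn + |u| + |v|} \ge (|C_q(\calA)|_n)^m$ for suitable $n$ (or more carefully, using that $(C_q)^* = C_q$ as noted after Definition \ref{cycle}, one works with prefix counts of $C_q^{pre}$, which are submultiplicative-friendly). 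Taking $\frac{1}{\cdot}\log$ of both sides and letting $m \to \infty$ with $n$ fixed, then letting $n \to \infty$, the additive shift and bounded factor wash out and yield $h(C_f(\calA)) \ge h(C_q(\calA))$. Hence $\max_{q \in Q} h(C_q(\calA)) = \max_{f \in F} h(C_f(\calA))$, and combining with the two applications of Theorem \ref{hausdorff_dimension_cycle_language_entropy} and Lemma \ref{automaton_closure} gives $d_H(V_k(\calA)) = d_H(V_k(\overline{\calA})) = d_H(\overline{V_k(\calA)})$, as desired. The only delicate point is making the entropy-of-concatenation estimate rigorous with the $\limsup$ definition of $h$; using Fact \ref{language_entropy_summation} and the prefix-language reformulation ($h(L) = h(L^{pre})$) together with Lemma \ref{language_entropy_prefix_limit_summation} to pass from $\limsup$ to $\lim$ should make the limit manipulations clean.
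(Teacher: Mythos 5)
Your proof is correct, but it takes a genuinely different route from the paper's. The paper first discards all but one accept state (noting this can only lower $d_H(V_k(\calA))$ while leaving the closure unchanged), uses Lemma \ref{strongly_connected_start_state} to make that accept state the unique start state, and then applies Corollary \ref{dimension_equiv_cycle} to get $d_H = d_B$ for the reduced automaton, finishing with closure-invariance of $d_B$ and Theorem \ref{closed_equality}. You instead apply Theorem \ref{hausdorff_dimension_cycle_language_entropy} to both $\calA$ and $\overline{\calA}$ (legitimate --- that theorem lives in Section \ref{dimensions} and does not depend on this lemma) and reduce everything to the combinatorial claim that in a strongly connected automaton all cycle languages have the same entropy. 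That claim is right, and your argument for it works; in fact you can simplify it: injectivity of $w \mapsto uwv$ already gives $|C_f(\calA)|_{n+|u|+|v|} \geq |C_q(\calA)|_n$, and since the additive shift $|u|+|v|$ is constant this yields $h(C_f(\calA)) \geq h(C_q(\calA))$ directly from the $\limsup$ definition, with no need for the $m$-fold concatenation or the passage from $\limsup$ to $\lim$ that you flag as delicate. (The $m$-fold version is also fine, since all the $w_i$ have the same length $n$ and so the tuple is recoverable from the concatenation.) What each approach buys: the paper's proof is shorter on the page because it leans on Corollary \ref{dimension_equiv_cycle}, whose proof in turn rests on the heavy Lemma \ref{hausdorff_omega_limit}; yours isolates a clean, reusable fact about cycle languages in strongly connected components and makes the role of strong connectedness completely explicit. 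One small point to tighten: strong connectedness alone gives only possibly-zero-length paths between states, so trimness of $\calA$ (and the existence of an infinite cycle language $C_q(\calA)$, needed for its entropy to be defined) additionally uses that the automaton has at least one transition, i.e.\ that it accepts some infinite string --- a degenerate case the paper also leaves implicit.
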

\begin{proof}
    Note that removing accept states from $\calA$ can only make the resulting Hausdorff dimension lower, so it suffices to consider the case where $\calA$ has only one accept state. 
    Similarly, by Lemma \ref{strongly_connected_start_state}, it suffices to assume $\calA$ has only one start state, which is also its accept state.
    
    In this case, $\calA$ satisfies the conditions of Corollary \ref{dimension_equiv_cycle}; so $d_H(V_k(\calA)) = d_B(V_k(\calA))$. 
    This is then equal to $d_B(\overline{V_k(\calA)})$, which is in turn equal to $d_H(\overline{V_k(\calA)})$ by Theorem \ref{closed_equality}, because $\overline{V_k(\calA)}$ is a closed $k$-automatic set.
\end{proof}

We show the analogous result for Hausdorff measures by first noting that when such an $X$ is embedded in its closure, the set-difference has lower dimension and hence is null in the higher-dimensional measure.

\begin{lem}
	\label{hausdorff_dimension_closure_minus_sc}
	Let $\mathcal{A}$ be a strongly connected B\"uchi automaton, and let $X = V_k(\mathcal{A})$. 
	Then $d_H(\overline{X} \setminus X) < d_H(X)$ assuming the latter is positive.
\end{lem}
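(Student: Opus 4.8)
The plan is to argue by contradiction. Since $\overline X\setminus X\subseteq\overline X$ and $d_H(\overline X) = d_H(X) =: \alpha$ by Lemma~\ref{strongly_connected_hausdorff_closure}, we automatically have $d_H(\overline X\setminus X)\le\alpha$; assume toward a contradiction that $d_H(\overline X\setminus X) = \alpha$, where $\alpha > 0$. Write $\overline{\mathcal A}$ for the closure of $\mathcal A$, so that $\overline X = V_k(\overline{\mathcal A})$ by Lemma~\ref{automaton_closure}. Every $x\in\overline X\setminus X$ has a base-$k$ representation accepted by $\overline{\mathcal A}$ but none accepted by $\mathcal A$, so $\overline X\setminus X\subseteq\nu_k(Z)\subseteq\overline X$, where $Z := L(\overline{\mathcal A})\setminus L(\mathcal A)$ is $\omega$-regular; hence $d_H(\nu_k(Z)) = \alpha$ as well.

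Next I would extract a strongly connected ``full-dimensional'' piece of $Z$. Fix a trim B\"uchi automaton $\mathcal C$ recognizing $Z$. By Theorem~\ref{hausdorff_dimension_cycle_language_entropy}(i) there is an accept state $q$ of $\mathcal C$ with $h(C_q(\mathcal C)) = \alpha\log k$; its cycle automaton $\mathcal C_q(\mathcal C)$ is strongly connected, and as a B\"uchi automaton it recognizes $C_q(\mathcal C)^\omega\subseteq Z$, so Lemma~\ref{dimension_entropy_cycle} gives $d_H(M') = \alpha$ where $M' := V_k(\mathcal C_q(\mathcal C)) = \nu_k\bigl(C_q(\mathcal C)^\omega\bigr)$. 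A point of $M'\cap X$ would need one representation in $Z$ (unaccepted by $\mathcal A$) and another accepted by $\mathcal A$, hence be $k$-rational; so $M'\cap X$ is countable, while $M'\setminus X\subseteq\nu_k(Z)\setminus X\subseteq\overline X\setminus X$.

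The heart of the argument is the following claim, which I would isolate as a separate lemma: \emph{if $\mathcal B$ is a strongly connected B\"uchi automaton with $\beta := d_H(V_k(\mathcal B)) > 0$, then $\mu_H^{\beta}\bigl(\overline{V_k(\mathcal B)}\setminus V_k(\mathcal B)\bigr) = 0$.} Granting this, the lemma follows. Applying the claim to $\mathcal C_q(\mathcal C)$ gives $\mu_H^{\alpha}(M') = \mu_H^{\alpha}(\overline{M'})$; and $\overline{M'} = V_k\bigl(\overline{\mathcal C_q(\mathcal C)}\bigr)$ is recognized by a strongly connected closed automaton, so Fact~\ref{hausdorff_measure_sc_closed}---whose distinguished eigenvector is strictly positive because the adjacency matrix of a strongly connected graph is irreducible---forces $\mu_H^{\alpha}(\overline{M'}) > 0$, hence $\mu_H^{\alpha}(M') > 0$. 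On the other hand, the claim applied to $\mathcal A$ itself gives $\mu_H^{\alpha}(\overline X\setminus X) = 0$, whence $\mu_H^{\alpha}(M'\setminus X)\le\mu_H^{\alpha}(\overline X\setminus X) = 0$. Since Hausdorff measure ignores the countable set $M'\cap X$, this says $\mu_H^{\alpha}(M') = 0$, contradicting $\mu_H^{\alpha}(M') > 0$. Therefore $d_H(\overline X\setminus X) < \alpha = d_H(X)$.

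To prove the claim I would use the maximal-entropy (Parry) measure $m_E$ of the edge shift of $\mathcal B$: since $\mathcal B$ is strongly connected this is an irreducible subshift of finite type, so $m_E$ exists and is ergodic, and pushing $m_E$ forward first along the edge-labeling into $[k]^\omega$ and then along $\nu_k$ yields a measure on $\overline{V_k(\mathcal B)}$. By the Mauldin--Williams computations behind Fact~\ref{hausdorff_measure_sc_closed}, together with the open set condition from Corollary~\ref{open} and a passage to a deterministic presentation (available because $\overline{\mathcal B}$ is weak), this pushforward equals $\mu_H^{\beta}$ restricted to $\overline{V_k(\mathcal B)}$ up to a positive constant---the $k$-rational overlaps being $\mu_H^{\beta}$-null since $\beta > 0$. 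Finally, $m_E$-almost every edge path visits every edge infinitely often by recurrence of the underlying irreducible finite Markov chain, in particular it visits an edge into an accept state infinitely often, and reading off its labels shows that the pushforward $m$ of $m_E$ to $[k]^\omega$ is concentrated on $L(\mathcal B)$; pushing forward once more by $\nu_k$ we conclude that $\overline{V_k(\mathcal B)}\setminus V_k(\mathcal B)$ is $\mu_H^{\beta}$-null. The step I expect to be the main obstacle is precisely this identification of $\mu_H^{\beta}$ with the pushforward Parry measure when $\mathcal B$ is non-deterministic: one must verify that the several runs on a single string do not distort the correspondence between the edge shift and the symbolic coding of the attractor, and I would likely reduce to a deterministic presentation of $\overline{\mathcal B}$ (its Fischer cover) and carefully track how ``acceptance by $\mathcal B$'' transports to it.
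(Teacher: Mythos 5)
Your overall architecture is genuinely different from the paper's and is logically coherent: reduce to a strongly connected full-dimensional piece of $L(\overline{\mathcal A})\setminus L(\mathcal A)$, then play off positivity of $\mu_H^{\alpha}$ on that piece against the claimed nullity of $\mu_H^{\alpha}(\overline{X}\setminus X)$. (Two small slips: $C_q(\mathcal C)^{\omega}\subseteq Z$ is false unless $q$ is a start state of $\mathcal C$ --- you need to prefix by a word $w$ from a start state to $q$ and work with $\nu_k(wC_q(\mathcal C)^{\omega})$ throughout, which also repairs the ``$M'\cap X$ is countable'' and ``$M'\setminus X\subseteq\overline X\setminus X$'' steps; and the positivity $\mu_H^{\alpha}(\overline{M'})>0$ cannot be read off Fact~\ref{hausdorff_measure_sc_closed} as stated, since that fact assumes determinism and determinizing $\overline{\mathcal C_q(\mathcal C)}$ need not preserve strong connectedness.)

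The genuine gap is the isolated claim that $\mu_H^{\beta}\bigl(\overline{V_k(\mathcal B)}\setminus V_k(\mathcal B)\bigr)=0$ for strongly connected $\mathcal B$. This claim is exactly Corollary~\ref{hausdorff_measure_closure} of the paper, which the paper \emph{derives from} the present lemma; so your proof stands or falls on giving a truly independent proof of it, and what you offer is a sketch whose decisive step you yourself flag as ``the main obstacle.'' Concretely, you need the absolute continuity $\mu_H^{\beta}\restriction\overline{V_k(\mathcal B)}\ll\nu_k{}_{*}m$, where $m$ is the label-pushforward of the Parry measure: without the direction ``$m$-null implies $\mu_H^{\beta}$-null,'' concentration of $m$ on $L(\mathcal B)$ tells you nothing about $\mu_H^{\beta}$ of the complement. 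Establishing this requires a two-sided comparison of $m$ on $k$-adic cylinders with $(\mathrm{diam})^{\beta}$ together with a covering/regularity argument, and in the non-deterministic case one must control how many distinct runs a single label string admits; none of this is carried out, and transporting the acceptance condition of $\mathcal B$ through a deterministic presentation of $\overline{\mathcal B}$ is itself nontrivial. By contrast, the paper avoids all measure theory: it shows directly that the prefix language of strings that eventually avoid accept states has \emph{strictly} smaller entropy than $h(\overline L^{pre})$, by inserting optional accept-visiting cycle blocks of a fixed length $m$ and counting $\binom{k}{r}$ insertion patterns, which boosts the growth rate from $\alpha^{m}$ to $\alpha^{m}+1$ per block. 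That elementary counting argument delivers the strict dimension drop in one step, precisely the quantitative content your measure-zero claim does not capture and which forces your detour through contradiction. As submitted, the proof is incomplete.
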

\begin{proof}
	First, we let $\overline{L}$ be the $\omega$-language accepted by $\overline{\calA}$.
	Note that $\nu_k(\overline{L}) = \overline{X}$, so an element of $\overline{X} \setminus X$ must correspond to an infinite string on which $\mathcal{A}$ passes through finitely many accept states before infinitely passing through exclusively non-accepting states. 
	Let $L'$ be the language of strings for which every infinite run is of this form (and for which there is at least one such infinite run). 
	This correspondence is not exact; it is possible that if an element of $\overline{X} \setminus X$ is $k$-rational, one of its representations is in $L'$ while the other is not. 
	This will not affect an analysis of Hausdorff dimension, because (nonzero) Hausdorff dimension does not change based on the membership of a countable set. 
	Our goal is then to show that $d_H(\nu_k(L')) < d_H(X)$.
	
	Let $F^c = Q \setminus F$ be the set of non-accepting states of $\mathcal{A}$, and let $L_q$ for $q \in F^c$ be the set of infinite strings for which every infinite run starting at state $q$ only passes through non-accepting states in $\mathcal{A}$ (and for which there is at least one such infinite run). 
	Since every string in $L'$ has a tail in $L_q$ for some $q$, we know $\nu_k(L')$ is a countable union of scaled copies of $\nu_k(L_q)$. 
	So it suffices to show that $d_H(\nu_k(L_q)) < d_H(\overline{X})$ for all $q \in F^c$. 
	We will show a stronger statement, that $d_H(\overline{\nu_k(L_q)}) < d_H(\overline{X})$. By Theorem \ref{closed_equality}, it suffices to show the corresponding entropy statement, that $h(L_q^{pre}) < h(\overline{L}^{pre})$.
	
	Let $\log \alpha = h(L_q^{pre})$. 
	By Lemma \ref{language_entropy_prefix_limit}, $\log \alpha = \lim_{n \to \infty} \frac{\log |L_q^{pre}|_n}{n}$. 
	For every non-accepting state $q'$ of $\mathcal{A}$, there exists a string on which $\mathcal{A}$ cycles from state $q'$ to itself while passing through an accept state. Choose such a cycle for each $q' \in F^c$, and let $m$ be the least common multiple of their lengths; by repeating the cycles if necessary, we may assume they all have the same length $m$.
	
	Let $M$ be the $\omega$-language defined as follows: every string in $M$ is made up of blocks $m$ characters long, with each block a ``normal block'' or a ``cycle block.'' 
	The normal blocks, when taken together with the cycle blocks removed, must form a string in $L_q$. 
	The cycle blocks are each one of the cycles of length $m$ mentioned above, whichever one corresponds to the state of $\mathcal{A}$ the string is in at the time, in a run witnessing that the normal blocks form a string in $L_q$. 
	Note that when every string in $M$ is prefixed by a constant string that corresponds to a path from the start state of $\mathcal{A}$ to state $q$ (which does not affect entropy), the result is a subset of $\overline{L}$. 
	So $h(M^{pre}) \leq h(\overline{L}^{pre})$, and it suffices to show $h(L_q^{pre}) < h(M^{pre})$.
	
	Now for every string in $L_q^{pre}$, choose a run that witnesses this. 
	Let $k$ be a positive integer, and let $0 \leq r \leq k$. 
	For any string in $L_q^{pre}$ of length $rm$, we may apply the chosen run and insert $(k-r)$ cycle blocks at any choice of block boundaries in order to produce a string in $M^{pre}$ of length $km$. 
	Therefore:

	$$|M^{pre}|_{km} \geq \sum_{r=0}^k {k \choose r} |L_q^{pre}|_{rm}.$$
	
	Let $1 < \beta < \alpha$. 
	One can check that eventually $|L_q^{pre}|_n > \beta^n$. 
	So for sufficiently large $k$:
	
	$$|M^{pre}|_{km} > \sum_{r=0}^k {k \choose r} \beta^{rm} = (\beta^m + 1)^k.$$
	
	So:
	
	$$h(M^{pre}) = \limsup_{n \to \infty} \frac{\log |M^{pre}|_n}{n}$$
	$$\geq \limsup_{k \to \infty} \frac{\log |M^{pre}|_{km}}{km}$$
	$$\geq \limsup_{k \to \infty} \frac{\log \left((\beta^m + 1)^k\right)}{km}$$
	$$= \limsup_{k \to \infty} \frac{k \log \left(\beta^m + 1\right)}{km}$$
	$$= \frac{\log \left(\beta^m + 1\right)}{m}.$$
	
	So for all $1 < \beta < \alpha$, we have $h(M^{pre}) \geq \frac{\log \left(\beta^m + 1\right)}{m}$. 
	Thus:
	
	$$h(M^{pre}) \geq \lim_{\beta \nearrow \alpha} \frac{\log \left(\beta^m + 1\right)}{m} = \frac{\log \left(\alpha^m + 1\right)}{m} > \frac{\log \left(\alpha^m\right)}{m} = \log \alpha = h(L_q^{pre}).$$
	
	This concludes the proof.
\end{proof}

\begin{cor}
\label{hausdorff_measure_closure}
	Let $\mathcal{A}$ be a strongly connected B\"uchi automaton, and let $X = V_k(\mathcal{A})$ have Hausdorff dimension $d_H(X) = \alpha$. 
	Then $\mu_H^{\alpha}(X) = \mu_H^{\alpha}(\overline{X})$.
\end{cor}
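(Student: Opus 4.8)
The plan is to sandwich $\mu_H^{\alpha}(X)$ between $\mu_H^{\alpha}(\overline{X})$ and itself, using monotonicity and subadditivity of the Hausdorff outer measure together with Lemma \ref{hausdorff_dimension_closure_minus_sc}. One direction is free: since $X \subseteq \overline{X}$, monotonicity of $\mu_H^{\alpha}$ gives $\mu_H^{\alpha}(X) \leq \mu_H^{\alpha}(\overline{X})$ immediately.

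For the reverse inequality I would decompose $\overline{X} = X \cup (\overline{X} \setminus X)$ and apply subadditivity of the outer measure to obtain $\mu_H^{\alpha}(\overline{X}) \leq \mu_H^{\alpha}(X) + \mu_H^{\alpha}(\overline{X} \setminus X)$. In the case $\alpha = d_H(X) > 0$, Lemma \ref{hausdorff_dimension_closure_minus_sc} says $d_H(\overline{X} \setminus X) < \alpha$, and since the $s$-dimensional Hausdorff measure of a set vanishes for every $s$ strictly larger than its Hausdorff dimension, we get $\mu_H^{\alpha}(\overline{X} \setminus X) = 0$. Hence $\mu_H^{\alpha}(\overline{X}) \leq \mu_H^{\alpha}(X)$, and combining with the previous paragraph finishes this case.

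It then remains to handle the degenerate case $\alpha = 0$, where $\mu_H^0$ is the counting measure and Lemma \ref{hausdorff_dimension_closure_minus_sc} is vacuous. Here I would argue directly: if $X$ is finite then $X$ is already closed, so $\overline{X} = X$ and there is nothing to prove; if $X$ is infinite then $\mu_H^0(X) = \infty$, and since $X \subseteq \overline{X}$ we also have $\mu_H^0(\overline{X}) = \infty$, so again the two agree.

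I expect no real obstacle here, as this is essentially a bookkeeping corollary of the preceding lemmas; the only point that needs care is remembering to split off the case $\alpha = 0$, since the statement ``$d_H(\overline{X} \setminus X) < 0$'' is meaningless and so the main argument cannot be run verbatim in that situation.
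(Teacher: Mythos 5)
Your proposal is correct and follows essentially the same route as the paper: both handle $\alpha>0$ by combining Lemma \ref{hausdorff_dimension_closure_minus_sc} with the vanishing of $\mu_H^{\alpha}$ above the Hausdorff dimension of $\overline{X}\setminus X$ (you simply spell out the monotonicity and subadditivity that the paper summarizes as ``additivity''), and both treat $\alpha=0$ separately via the counting measure. No gaps.
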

\begin{proof}
    We examine two cases. 
    If $\alpha$ is positive, then the previous lemma gives us $d_H(\overline{X} \setminus X) < \alpha$, hence $\mu_H^\alpha(\overline{X} \setminus X) = 0$. 
    The result then follows from additivity of Hausdorff measure.
    
    Assume on the other hand that $\alpha = 0$; then $\mu_H^\alpha$ is the counting measure.
    If $X$ is infinite, then so is $\overline{X}$, hence the counting measure agrees for both.
    If $X$ is finite, then $\overline{X} = X$, as any finite set in a metric space is closed. So in either case $\mu_H^0(X) = \mu_H^0(\overline{X})$.
\end{proof}

Our goal is now to extend this result and compute the Hausdorff measure of any $k$-automatic fractal. 
We will do this by making use of a class of B\"uchi automata called \textit{unambiguous} B\"uchi automata:

\begin{defn}
\label{unambiguous_buchi_automata}
	Let $\mathcal{A}$ be a B\"uchi automaton. We say that $\mathcal{A}$ is \textit{unambiguous} if for every infinite string $w$ accepted by $\mathcal{A}$, there is exactly one accepting run of $\mathcal{A}$ on $w$.
\end{defn}

Note that any deterministic B\"uchi automaton is unambiguous. 
But in the case of B\"uchi automata, we know that nondeterminism is sometimes necessary in order to achieve full computational capability. 
Fortunately, a result of \cite{CM03} tells us that this capability can still be fully realized in the unambiguous case:

\begin{fact}
\label{unambiguous_equivalence}
	Let $L$ be a regular $\omega$-language. 
	Then there is an unambiguous B\"uchi automaton $\mathcal{A}$ with $L(\mathcal{A}) = L$.
	Moreover, given any B\"uchi automaton for $L$, we can effectively convert it to an unambiguous automaton.
\end{fact}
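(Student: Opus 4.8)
The plan is to reduce to the deterministic setting and then treat one ``accepting loop'' at a time. First I would invoke McNaughton's theorem \cite{M66} (whose construction is effective) to get a deterministic Muller automaton $\mathcal{D} = (Q,\Sigma,\delta,q_0,\mathcal{F})$ with $L(\mathcal{D}) = L$, where $\mathcal{F} \subseteq \mathcal{P}(Q)$. Writing $\rho_w$ for the unique run of $\mathcal{D}$ on an infinite word $w$ and $\mathrm{Inf}(\rho_w)$ for the set of states entered infinitely often along $\rho_w$, we have $w \in L$ iff $\mathrm{Inf}(\rho_w) \in \mathcal{F}$. For each $S \in \mathcal{F}$ put $L_S = \{w : \mathrm{Inf}(\rho_w) = S\}$; since $\mathrm{Inf}(\rho_w)$ is a single well-defined (and strongly connected) subset of $Q$, the $L_S$ form a partition of $L$ into finitely many pieces. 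It then suffices to build an unambiguous B\"uchi automaton $\mathcal{B}_S$ recognizing each $L_S$ and take a disjoint union.

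To build $\mathcal{B}_S$, I would use a two-phase design. In \emph{phase one} it simulates $\mathcal{D}$ deterministically. It is permitted to switch to \emph{phase two} only along a transition whose source is not in $S$ and whose target is in $S$ (and from the start state if $q_0 \in S$). In phase two it continues to track $\mathcal{D}$'s state but now only allows transitions that stay inside $S$, and it runs the standard breakpoint bookkeeping on top of this: it remembers the set of $S$-states visited since the last breakpoint and B\"uchi-accepts (resetting the bookkeeping) each time that set exhausts $S$. A run of $\mathcal{B}_S$ is accepting exactly when the underlying $\mathcal{D}$-run eventually stays in $S$ and visits every state of $S$ infinitely often, i.e. exactly when $w \in L_S$; so $L(\mathcal{B}_S) = L_S$.

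The crux is unambiguity. Both phases are internally deterministic, so the only nondeterminism in $\mathcal{B}_S$ is the choice of \emph{when} to switch phases. For $w \in L_S$, the run $\rho_w$ lies outside $S$ only finitely often, so there is a unique last transition from $Q\setminus S$ into $S$ (or none, if $\rho_w$ lies in $S$ from the outset, in which case $q_0 \in S$). Switching at that last transition — or starting in phase two when $\rho_w$ never leaves $S$ — is the \emph{only} choice that does not eventually get phase two blocked by a later departure from $S$; and once that switch is made, phase two sees $\rho_w$ remain in $S$ and hit all of $S$ infinitely often, hence accepts. So $\mathcal{B}_S$ has exactly one accepting run on each word of $L_S$. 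Finally I would let $\mathcal{A}$ be the disjoint union of the $\mathcal{B}_S$ over $S \in \mathcal{F}$: a word of $L$ lies in exactly one $L_S$, so it has exactly one accepting run in $\mathcal{A}$, making $\mathcal{A}$ unambiguous with $L(\mathcal{A}) = L$; and all ingredients (determinization, the two-phase gadget, the breakpoint construction, the disjoint union) are effective, giving the ``moreover'' clause. I expect the main obstacle to be the careful case analysis certifying that no \emph{spurious} accepting run can arise from switching phases ``too early'' (or from starting in phase two when $\rho_w$ does leave $S$); this is exactly where the finiteness of exits from $S$ and the fact that $\mathrm{Inf}(\rho_w)$ is strongly connected have to be used.
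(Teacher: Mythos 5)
Your construction is correct, but it takes a different route from the paper: the paper does not prove this fact at all, it simply cites Carton and Michel \cite{CM03}, whereas you give a self-contained construction by determinizing to a Muller automaton, partitioning $L$ according to the infinity set $S = \mathrm{Inf}(\rho_w)$ of the unique deterministic run, and for each $S$ using a two-phase automaton whose only nondeterminism is the guess of the \emph{last} entry into $S$. The key points that make your argument go through are exactly the ones you flag: restricting the phase switch to transitions entering $S$ from outside (so that after the final exit from $S$ there is only one switching opportunity, rather than infinitely many), handling the degenerate case where the run never leaves $S$ by allowing a phase-two start state only when $q_0 \in S$, and noting that any premature switch is killed because phase two forbids leaving $S$. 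Since both phases are deterministic and the breakpoint bookkeeping is deterministic, each $w \in L_S$ has exactly one accepting run in $\mathcal{B}_S$ and none in $\mathcal{B}_{S'}$ for $S' \neq S$, so the disjoint union is unambiguous; and every ingredient (McNaughton/Safra determinization, the gadgets, the union) is effective, which gives the ``moreover'' clause. This is the standard textbook path to plain unambiguity; Carton and Michel actually prove something stronger (automata in which \emph{every} word, accepted or not, admits exactly one final path), but the paper only uses plain unambiguity, so your argument suffices for everything downstream (Lemma \ref{hausdorff_measure_one_key_state} and Theorem \ref{hausdorff_measure_all_key_states}).
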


We will now extend Corollary \ref{hausdorff_measure_closure}, giving us a method to compute the Hausdorff measure of any $k$-automatic fractal. 
Using the previous fact, we may assume that the B\"uchi automaton defining the fractal is unambiguous. 
Next, it is useful for us to define functions associating each infinite string in an $\omega$-language $L$ with its \textit{key state} and \textit{key prefix}:

\begin{defn}
	Let $\mathcal{A}$ be an unambiguous B\"uchi automaton with set of states $Q$ and recognizing an $\omega$-language $L$. 
	The function $k_\mathcal{A} : L \to Q$ maps a string $w$ to the unique \textit{key state} $q$ such that when $\mathcal{A}$ performs an accepting run on $w$:
	
	\begin{itemize}
		\item the run passes through state $q$;
		\item after first passing through state $q$, the run never leaves its strongly connected component;
		\item before first passing through state $q$, the run never enters its strongly connected component.
	\end{itemize}

	The function $p_\mathcal{A} : L \to \Sigma^*$ maps $w$ to the \textit{key prefix} of $w$ running from the start state of $\mathcal{A}$ to the first occurrence of $k_\mathcal{A}(w)$.
\end{defn}

Note that when a run leaves a strongly connected component, it can never return to it, or else the states in between would also be a part of the same component, a contradiction. 
So $k_\mathcal{A}$ and $p_\mathcal{A}$ are well-defined functions.

\begin{lem}
\label{hausdorff_measure_one_key_state}
	Let $\mathcal{A}$ be an unambiguous B\"uchi automaton with set of states $Q$ and recognizing an $\omega$-language $L$.
	Let $Q' \subseteq Q$ be the set of states whose strongly connected component contains an accept state. 
	For each $q \in Q'$, let $\mathcal{A}_q$ be the automaton created by moving the start state of $\mathcal{A}$ to $q$ and removing all transitions out of its strongly connected component, and let $L_q$ be the $\omega$-language it accepts. 
	Let $M_q$ be the $\omega$-language containing all strings $w \in L$ with $k_\mathcal{A}(w) = q$. Then $d_H(\nu_k(M_q)) = d_H(\nu_k(L_q))$, assuming $M_q \neq \varnothing$. 
	Moreover, let $P_q = \{p_\mathcal{A}(w) : w \in M_q\}$; then for any $\alpha$:
	
	$$\mu_H^{\alpha}(M_q) = \sum_{u \in P_q} \frac{\mu_H^{\alpha}(L_q)}{k^{\alpha|u|}}$$
	
	(here $|u|$ is the length of $u$).
\end{lem}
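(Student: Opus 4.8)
The plan is to decompose $M_q$ according to the key prefix, prove that each piece is a scaled affine copy of $L_q$ (up to a countable set), and then sum the resulting measures. First I would observe that for $w \in M_q$, the key prefix $u = p_{\mathcal{A}}(w)$ brings $\mathcal{A}$ from a start state to the first occurrence of $q$, and after that the accepting run never leaves the strongly connected component of $q$; hence the tail of $w$ after $u$ is accepted by $\mathcal{A}_q$. Conversely, since $\mathcal{A}$ is unambiguous and $u$ witnesses a run reaching $q$ for the first time while staying outside the SCC of $q$ until then, any string $u z$ with $z \in L_q$ lies in $L$ and has key state $q$ and key prefix $u$ — modulo the usual caveat that for $k$-rational points one of the two base-$k$ representations may behave differently, which is a countable set and hence irrelevant to $d_H$ and to $\mu_H^\alpha$ in positive dimension (and in dimension zero one argues separately, as in Corollary~\ref{hausdorff_measure_closure}). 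So as sets, up to countably many points,
$$\nu_k(M_q) = \bigcup_{u \in P_q} \nu_k(u L_q) = \bigcup_{u \in P_q} T_u\bigl(\nu_k(L_q)\bigr),$$
where $T_u$ is the affine contraction $x \mapsto \nu_k(u\vec{0}^\omega) + k^{-|u|} x$, exactly the map appearing in the proof of Lemma~\ref{strongly_connected_start_state}.

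Next I would establish the dimension equality $d_H(\nu_k(M_q)) = d_H(\nu_k(L_q))$. Since each $T_u$ is an invertible affine map, $d_H(T_u(\nu_k(L_q))) = d_H(\nu_k(L_q))$, and by the countable-union formula for Hausdorff dimension (the strengthened union fact quoted in Section~\ref{Hdim}), the dimension of $\nu_k(M_q)$ is the supremum over $u \in P_q$ of these, which is $d_H(\nu_k(L_q))$ provided $M_q \neq \varnothing$ so that $P_q \neq \varnothing$.

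For the measure formula, the key point is that the images $T_u(\nu_k(L_q))$ for distinct $u \in P_q$ overlap only in a countable (in fact $k$-rational) set: two distinct key prefixes $u, u'$ of the same length give disjoint base-$k$ cylinders except at their shared boundary, and prefixes of different lengths can only meet along $k$-rational points as well. A countable set is $\mu_H^\alpha$-null whenever $\alpha > 0$, so by countable additivity of Hausdorff measure on the disjointified pieces,
$$\mu_H^\alpha(M_q) = \sum_{u \in P_q} \mu_H^\alpha\bigl(T_u(\nu_k(L_q))\bigr) = \sum_{u \in P_q} \frac{\mu_H^\alpha(\nu_k(L_q))}{k^{\alpha |u|}},$$
using the scaling law $\mu_H^\alpha(r \cdot S) = r^\alpha \mu_H^\alpha(S)$ with $r = k^{-|u|}$. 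When $\alpha = 0$ the measure is counting measure and the identity reduces to the statement that $M_q$ is in bijection (off a countable set, which here one must handle by hand) with $\bigsqcup_{u} L_q$; I would dispatch this case by the same case split used in Corollary~\ref{hausdorff_measure_closure}, noting that $M_q$ is infinite iff some $T_u(\nu_k(L_q))$ is infinite or $P_q$ is infinite, and the claimed sum is then correspondingly infinite.

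The main obstacle I anticipate is bookkeeping around the non-exactness of the correspondence between $M_q$ and $\bigsqcup_{u \in P_q} u L_q$: the set of $k$-rationals with two representations could in principle land in $M_q$ via one representation and outside via the other, or could create spurious overlaps between the $T_u$-images. The clean way around this is to push everything through Hausdorff dimension and positive-dimensional Hausdorff measure, both of which ignore countable sets, and to treat $\alpha = 0$ as a genuinely separate, elementary case. A secondary point requiring a line of justification is that $T_u(\nu_k(L_q)) = \nu_k(u L_q)$ with $u L_q \subseteq L$ and $k_{\mathcal{A}} \equiv q$ on $u L_q$ — this is immediate from the definition of key state together with the fact that the SCC of $q$ is a trap once entered, but it is worth spelling out so that the identification $M_q = \bigcup_{u \in P_q} u L_q$ (modulo countable) is airtight.
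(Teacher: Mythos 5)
Your proposal is correct and follows essentially the same route as the paper's proof: decompose $M_q$ by key prefix into the pieces $uL_q$, recognize each $\nu_k(uL_q)$ as an affine copy of $\nu_k(L_q)$ contracted by $k^{-|u|}$, and sum over the countable set $P_q$. If anything you are more careful than the paper, which invokes ``countable additivity'' without noting that the images $\nu_k(uL_q)$ can overlap (only in the countable, hence $\mu_H^\alpha$-null for $\alpha>0$, set of $k$-rationals) and without treating $\alpha=0$ separately.
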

\begin{proof}
	Let $M_{q,u}$ be the set of $w \in L$ with $k_\mathcal{A}(w) = q$ and $p_\mathcal{A}(w) = u$. 
	Such strings decompose as $w = uv$, where $v$ is a string that, starting from state $q$, passes through accept states in its strongly connected component infinitely often without leaving the component. 
	In other words, $M_{q,u} = u L_q$. 
	It follows that $\nu_k(M_{q,u})$ is an affine copy of $\nu_k(L_q)$, which is scaled down with a factor of $f = \frac{1}{k^{|u|}}$. 
	By properties of Hausdorff measure, $\mu_H^{\alpha}(M_{q,u}) = \frac{\mu_H^{\alpha}(L_q)}{k^{\alpha|u|}}$.
	
	Note then that $M_q = \bigcup_{u \in P_q} M_{q,u}$, because $M_{q,u}$ is empty for $u \notin P_q$. 
	Since $P_q \subseteq \Sigma^*$ is a countable set, we may apply countable additivity to get the desired formula.
\end{proof}

This process may then be applied to every key state to produce the following result:

\begin{thm}
\label{hausdorff_measure_all_key_states}
	Let $\mathcal{A}$, $Q$, $Q'$, $L$, $L_q$, $M_q$ be as in the previous lemma. Then:
	
	\begin{enumerate}[(i)]
		\item $d_H(\nu_k(L)) = \max_{q \in Q'} d_H(\nu_k(L_q))$,
		\item with $\alpha = d_H(\nu_k(L))$, $\mu_H^{\alpha}(L) = \sum_{q \in Q'} \mu_H^{\alpha}(M_q)$.
	\end{enumerate}
\end{thm}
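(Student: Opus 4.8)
The plan is to derive both parts from Lemma \ref{hausdorff_measure_one_key_state} together with the observation that the key-state function $k_\mathcal{A}$ induces a partition of $L$. Because $\mathcal{A}$ is unambiguous, every $w \in L$ has a unique accepting run and hence a unique key state $k_\mathcal{A}(w)$; by definition that run eventually remains in the strongly connected component of $k_\mathcal{A}(w)$ while visiting an accept state infinitely often, so that component contains an accept state and $k_\mathcal{A}(w) \in Q'$. Thus $\{M_q : q \in Q'\}$ is a partition of $L$ (after discarding empty pieces), and applying $\nu_k$ gives the finite union $\nu_k(L) = \bigcup_{q \in Q'} \nu_k(M_q)$.

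For (i), I would use the countable-union formula for Hausdorff dimension to get $d_H(\nu_k(L)) = \max_{q \in Q'} d_H(\nu_k(M_q))$ and then substitute $d_H(\nu_k(M_q)) = d_H(\nu_k(L_q))$ from Lemma \ref{hausdorff_measure_one_key_state}. The one subtlety is that some $q \in Q'$ might have $M_q = \varnothing$ while $L_q \neq \varnothing$; this occurs exactly when every path from a start state into the component of $q$ first meets that component at some other state $q'$. Then $\mathcal{A}_q$ and $\mathcal{A}_{q'}$ are strongly connected automata differing only in their start state, so $d_H(\nu_k(L_q)) = d_H(\nu_k(L_{q'}))$ by Lemma \ref{strongly_connected_start_state}, and $M_{q'} \neq \varnothing$; hence no term is lost from the maximum. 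States $q \in Q'$ with $L_q = \varnothing$ are dead ends, removable by trimming, and contribute nothing.

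For (ii), set $\alpha = d_H(\nu_k(L))$, so that $\mu_H^\alpha(\nu_k(M_q)) = 0$ for every $q$ with $d_H(\nu_k(M_q)) < \alpha$ and the right-hand sum is a well-defined element of $[0,\infty]$. Subadditivity of the outer measure gives $\mu_H^\alpha(\nu_k(L)) \le \sum_{q \in Q'} \mu_H^\alpha(\nu_k(M_q))$ at once. For the reverse inequality I would argue that the pieces $\nu_k(M_q)$ are pairwise disjoint away from a negligible set: if $q \neq q'$ and $x \in \nu_k(M_q) \cap \nu_k(M_{q'})$, then $x$ has two distinct $k$-representations, one witnessing membership in $M_q$ and one in $M_{q'}$ (distinct because $M_q \cap M_{q'} = \varnothing$ as $\omega$-languages). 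Letting $D$ be the set of points with a non-unique $k$-representation, the sets $\nu_k(M_q) \setminus D$ are genuinely pairwise disjoint with union $\nu_k(L) \setminus D$, so additivity of Hausdorff measure over these pieces gives $\mu_H^\alpha(\nu_k(L)\setminus D) = \sum_{q\in Q'}\mu_H^\alpha(\nu_k(M_q)\setminus D)$; combined with $\mu_H^\alpha(D \cap \nu_k(L)) = 0$ (and the corresponding statement for each $M_q$) this yields (ii), and one may then unpack each $\mu_H^\alpha(M_q)$ further via the prefix formula of Lemma \ref{hausdorff_measure_one_key_state}.

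The step I expect to be the main obstacle is the claim that $D$ may be discarded, i.e. that $\mu_H^\alpha(D\cap\nu_k(L)) = 0$, equivalently that each $\nu_k(M_q)$ meets the others only in a $\mu_H^\alpha$-null set. In the one-dimensional setting this is automatic: $D$ is countable, so it is $\mu_H^\alpha$-null whenever $\alpha > 0$, while the remaining case $\alpha = 0$ (where $\mu_H^0$ is counting measure) is handled by splitting into the subcases $\nu_k(L)$ finite and $\nu_k(L)$ infinite. In higher dimensions $D$ is a countable union of $(d-1)$-dimensional coordinate slices, and controlling $\mu_H^\alpha(D \cap \nu_k(L))$ — via the structure of the subautomata of $\mathcal{A}$ whose strings take values in a coordinate hyperplane, together with Theorem \ref{hausdorff_dimension_cycle_language_entropy} to bound the relevant cycle-language entropies — is where the delicate bookkeeping, and any additional hypotheses needed, will reside.
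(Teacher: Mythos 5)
Your proposal is correct and follows essentially the same route as the paper, whose entire proof is two sentences: note that unambiguity makes $\{M_q\}_{q\in Q'}$ a partition of $L$, apply the union formula for Hausdorff dimension together with Lemma \ref{hausdorff_measure_one_key_state} for (i), and invoke additivity for (ii). The subtleties you isolate --- the $q$ with $M_q=\varnothing$, and especially whether the images $\nu_k(M_q)$ overlap only in a $\mu_H^\alpha$-null set of multiply-represented points --- are genuine, but the paper simply does not address them (the same implicit appeal to ``countable additivity'' across possibly overlapping $\nu_k$-images already occurs in the proof of Lemma \ref{hausdorff_measure_one_key_state}), so you are not missing an idea the paper supplies; you have instead located a point where the paper is less careful than you are.
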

\begin{proof}
	For (i), we need only note that from the previous lemma $d_H(\nu_k(L_q)) = d_H(\nu_k(M_q))$ and that $L = \bigcup_{q \in Q'} M_q$. Then (ii) also follows from $L = \bigcup_{q \in Q} M_q$.
\end{proof}

\section{Model-theoretic consequences}\label{models}

For the basics of first-order logic and model theory, see \cite{M02}, from which we also take our notation and conventions.
We recall the following correspondence between subsets of $\R$ recognized by B\"uchi automata and definable subsets of the first-order structure $\calR_k=(\R, <, +, \Z, X_k(x,u,d))$ where the ternary predicate $X_k(x,u,d)$ holds precisely if $u$ is an integer power of $k$, and in the $k$-ary expansion of $x$ the digit specified by $u$ is $d$, i.e. $d$ is the coefficient of $u$ in the expansion.
In particular, note that $u \in k^{\Z}$ and $d \in \{0,\ldots ,k-1\}$ in any tuple $(x,u,d)$ such that $X_k(x,u,d)$ holds.
\begin{thm}[\cite{BRW98}]
For each $n \in \N$, any $A \subseteq \R^n$ is $k$-automatic if and only if $A$ is definable in $\calR_k$.
\end{thm}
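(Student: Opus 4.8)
The plan is to prove the two implications separately: that every $\calR_k$-definable subset of $\R^n$ is $k$-automatic, by induction on the structure of formulas, and that every $k$-automatic subset of $\R^n$ is $\calR_k$-definable, by writing a formula asserting the existence of an accepting run of a recognizing B\"uchi automaton. Throughout I would fix the standard encoding of an $n$-tuple of reals by an $\omega$-word over a finite alphabet: sign bits, the integer parts padded with leading zeros to a common length, a radix-point marker, then the fractional parts, with all $n$ coordinates interleaved. The language of well-formed encodings and the relation between the (at most $2^n$) encodings of a single tuple are both $k$-regular, and these facts let one pass freely between $A\subseteq\R^n$ and its language of encodings.

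\textbf{Definable $\Rightarrow$ automatic.}
First I would check that each atomic relation of $\calR_k$ has a $k$-regular encoding-language: $x<y$ is decided at the first digit position where the representations disagree (after accounting for normalization); the graph of $+$ is recognized by a finite automaton tracking carry digits (guessed nondeterministically when reading most-significant-digit-first); membership in $\Z$ is recognized by checking that the fractional digits are eventually all $0$ or all $k-1$; and $X_k(x,u,d)$ is recognized by locating the digit position indexed by the power $u$ of $k$ and reading off $d$. It then remains to see that the $k$-automatic subsets of $\R^n$ are closed under the Boolean connectives and under projecting out a coordinate. Union and intersection are product constructions; projection is the standard alphabet-erasing construction, where one uses that every real has a well-formed encoding and that the normalization relation is $k$-regular, so that no points are lost or spuriously gained; and complement is B\"uchi's complementation theorem for $\omega$-regular languages, intersected back with the $k$-regular language of well-formed encodings. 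Induction on the syntax of a defining formula finishes this direction.

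\textbf{Automatic $\Rightarrow$ definable.}
Conversely, given a B\"uchi automaton $\calA=(Q,\Sigma,\delta,S,F)$ with $Q=\{q_0,\dots,q_{m-1}\}$ recognizing the encodings of $A$, I would produce an $\calR_k$-formula $\varphi(x_1,\dots,x_n)$ expressing ``there is an accepting run of $\calA$ on the encoding of $(x_1,\dots,x_n)$''. The input word needs no separate quantifier, since the digits of each $x_\ell$ at each power of $k$ are retrieved via $X_k$. An accepting run $\rho\colon\N\to Q$ is encoded by $m$ auxiliary real variables $y_0,\dots,y_{m-1}$: the digit of $y_i$ at position $j$ is $1$ exactly when $\rho(j)=q_i$, and $0$ otherwise. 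The formula then quantifies over the $y_i$ and asserts, using only $<$, $+$, $\Z$, and $X_k$: that exactly one $y_i$ has a $1$ at each relevant position (so that the $y_i$ encode a bona fide state sequence); that position $0$ carries a start state; that consecutive positions are consistent with $\delta$ on the digit tuple read from the $x_\ell$ there --- where ``the next position'' is first-order definable as the largest power of $k$ strictly below the current one; and the B\"uchi acceptance condition, namely that for every digit position there is a later one at which the encoded run sits in $F$. Since $A$ is precisely the set of tuples with a well-formed encoding admitting such a run, $\varphi$ defines $A$.

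\textbf{Main obstacle.}
The genuinely heavy input on the first implication is B\"uchi complementation; everything else there is automaton bookkeeping. On the second implication the delicate part is encoding an infinite run and its ``infinitely often'' acceptance condition inside $\calR_k$ --- which is exactly where the predicate $\Z$ and the multiplicative behaviour of powers of $k$ (available through $X_k$) carry the argument --- together with the careful reduction of arbitrary subsets of $\R^n$ (signs, unbounded integer parts, the alignment between run positions and digit positions, and the countably many tuples with two base-$k$ encodings) to the $[0,1]^d$-style framework in which $k$-automaticity was defined in the text. Depending on the precise encoding conventions, one may first wish to replace $\calA$ by an equivalent automaton of simpler shape --- for instance an unambiguous one (Fact~\ref{unambiguous_equivalence}) or a deterministic one when available --- which streamlines, but is not essential to, the existential form of $\varphi$.
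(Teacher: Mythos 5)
The paper does not actually prove this statement---it is quoted directly from \cite{BRW98}---and your two-direction outline (structural induction on formulas using closure of $\omega$-regular languages under Boolean operations, projection, and B\"uchi complementation in one direction; a first-order encoding of an accepting run via auxiliary reals whose digits record the state sequence, with positions indexed by powers of $k$ and the acceptance condition phrased as ``for every position there is a later one in $F$,'' in the other) is essentially the standard argument of that reference, so your approach is correct. One small correction: membership in $\Z$ is characterized by the fractional digits being \emph{identically} $0$ (or identically $k-1$ in the non-canonical representation), not \emph{eventually} so---``eventually all $0$ or all $k-1$'' would instead recognize the $k$-rationals.
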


In conjunction with results from \cite{BG21}, we obtain the following consequences for definable subsets of the structure $\calR_k$.
Below, by ``Cantor set'' we mean a nonempty compact set that has no isolated points and has no interior.

\begin{lem}\label{loglem}
Suppose that $\calA$ is a trim B\"uchi automaton with $m$ states on alphabet $[k]$ that has one accept state, which is also the start state.
Let $L$ be the language it accepts as a B\"uchi automaton.
Suppose that $w \in [k]^n$ is not the prefix of any word in the language $L$.
Then every subset of $[0,1]$ of the form $(\nu_k(w0^{\omega}),\nu_k(w(k-1)^{\omega}))$, with $w \in [k]^{\ell}$, $\ell \in \N$, has a subinterval of size at least $k^{-(\ell+m+n)}$ disjoint from $\nu_k(L)$.
Moreover, the box-counting dimension of $\nu_k(L)$ is at most $\log_{k^{m+n}}(k^{n+m}-1)<1$.

\begin{proof}
For each $q \in Q$ we let $s_q$ be a minimal path from $q$ back to the start state.
We know a minimal path visits each state at most once. 
For a prefix $w \in [k]^*$ such that a run on $\calA$ stops at state $q$, the interval $\tilde{I} =(\nu_k(ws_qv0^{\omega}),\nu_k(ws_qv(k-1)^{\omega}))$ cannot be in $\nu_k(L)$.
Observe that the length of $\tilde{I}$ is at least $k^{-(\ell+m+n)}$.

For the ``moreover,'' observe that by Lemma \ref{box_counting_entropy} and Lemma \ref{automaton_closure}, we have the following:
$$d_B(\nu_k(L)) = d_B(\overline{\nu_k(L)}) = \frac{1}{\log(k)} h(L^{pre}) = \lim_{d \to \infty} \frac{\log(|L^{pre}|_{d})}{d \log(k)} = \lim_{d \to \infty} \frac{\log(|L^{pre}|_{d(n+m)})}{d(n+m) \log(k)}. $$
We proceed by induction on $d$.
For the base case, note $|L^{pre}|_{n+m} \leq k^m(k^n-1)<k^{n+m}-1$. 
For $d>1$, we know $|L^{pre}|_{d(n+m)} \leq |L^{pre}|_{(d-1)(n+m)}(k^{n+m}-1)$ because for each $w \in  |L^{pre}|_{(d-1)(n+m)}$ there exists $q' \in Q$ such that $ws_{q'}v$ is not in $|L^{pre}|_{d(n+m)}$.
By induction, we get the following:
$$ \lim_{d \to \infty} \frac{\log(|L^{pre}|_{d(n+m)})}{d(n+m)\log(k)} \leq  \lim_{d \to \infty} \frac{\log((k^{n+m}-1)^d)}{d(n+m)\log(k)},$$
and hence $d_B(\nu_k(L)) \leq \log_{k^{n+m}}(k^{n+m}-1)$.
\end{proof}
\end{lem}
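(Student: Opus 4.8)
The plan is to build, for each state $q$ reachable by a prefix, an explicit ``escape'' suffix that forces a gap in $\nu_k(L)$, and then to turn the counting of such escapes into the box-counting estimate via the entropy formula already established. First I would fix notation: for each state $q \in Q$ let $s_q \in [k]^{*}$ be a shortest path in the automaton from $q$ back to the start/accept state; since a shortest path visits each state at most once, $|s_q| \le m$. The hypothesis gives a word $w_0 \in [k]^n$ that is not a prefix of any element of $L$, equivalently: whenever the run of $\calA$ on some prefix ends at the start state, reading $w_0$ from there forces the run to die (there is no continuation). So for any prefix $u$ whose run on $\calA$ (from the unique start state) ends at state $q$, the concatenation $u\, s_q\, w_0$ is not a prefix of any word in $L$: after $u s_q$ the run is back at the start state, and then $w_0$ kills it. The length of this ``forbidden continuation'' $s_q w_0$ is at most $m+n$.

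Next I would convert this into the interval statement. Given the subinterval $(\nu_k(w 0^{\omega}), \nu_k(w(k-1)^{\omega}))$ with $w \in [k]^{\ell}$, we may assume $w$ is a prefix of some word of $L$ (otherwise the whole interval already misses $\nu_k(L)$, as $\overline{\nu_k(L)} = \nu_k(\overline{L})$ by Lemma \ref{automaton_closure}, and the claim is trivial). Let $q$ be the state the run on $w$ ends at, and form $w' = w\, s_q\, w_0 \in [k]^{\ell'}$ with $\ell' \le \ell + m + n$. Since $w'$ extends $w$, the interval $\tilde I = (\nu_k(w' 0^{\omega}), \nu_k(w'(k-1)^{\omega}))$ is a subinterval of the given interval; since $w'$ is not a prefix of any word in $L$, no element of $\nu_k(L)$ lies in $\tilde I$ (any such element would have $w'$ among its base-$k$ representation's prefixes, hence in $L^{pre}$ — again using that membership of $k$-rationals, a countable set, does not matter and can be handled by passing to $\overline{L}$). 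Finally $|\tilde I| = k^{-\ell'} \ge k^{-(\ell + m + n)}$, as claimed. (Here I would double-check the edge case where $\tilde I$ degenerates, i.e. $\ell' $ is such that $w'0^\omega$ and $w'(k-1)^\omega$ collide — they never do, since the open interval $(\nu_k(w'0^\omega), \nu_k(w'(k-1)^\omega))$ has length exactly $k^{-|w'|} > 0$.)

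For the ``moreover'' I would run the induction exactly as sketched in the excerpt. Start from $d_B(\nu_k(L)) = \frac{1}{\log k} h(L^{pre}) = \lim_{d\to\infty} \frac{\log |L^{pre}|_{d(n+m)}}{d(n+m)\log k}$, using Lemma \ref{box_counting_entropy}, Lemma \ref{automaton_closure}, and Lemma \ref{language_entropy_prefix_limit} (so that the $\limsup$ is a genuine limit and we may restrict to the subsequence of indices divisible by $n+m$). The base case is $|L^{pre}|_{n+m} \le k^{m}(k^{n}-1) < k^{n+m}-1$: a length-$(n+m)$ prefix is a length-$m$ string followed by a length-$n$ string, and the length-$n$ tail cannot be $w_0$-like... more carefully, I would argue that among the $k^{n+m}$ strings of that length, for each of the $\le k^m$ choices of the first $m$ symbols leading to some state $q$, at least one length-$n$ continuation (namely the one obtained from the escape word associated to $q$) is excluded, giving $\le k^m(k^n - 1)$. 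For the inductive step, every $v \in L^{pre}_{(d-1)(n+m)}$ ends its run at some state $q'$, and then $v\, s_{q'}\, w_0$ (padded to length exactly $n+m$ beyond $v$ if $|s_{q'} w_0| < n+m$, by any fixed completion — or more cleanly, observe directly that at least one of the $k^{n+m}$ extensions of $v$ to length $d(n+m)$ is forbidden), so $|L^{pre}|_{d(n+m)} \le |L^{pre}|_{(d-1)(n+m)}\,(k^{n+m}-1)$. Hence $|L^{pre}|_{d(n+m)} \le (k^{n+m}-1)^d$ and the limit is at most $\frac{\log(k^{n+m}-1)}{(n+m)\log k} = \log_{k^{n+m}}(k^{n+m}-1) < 1$.

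The main obstacle I expect is purely bookkeeping around lengths: the escape suffix $s_q w_0$ has length at most $m+n$ but possibly strictly less, so in the counting step one must be careful that ``$v$ has a forbidden extension of length $d(n+m)$'' really follows — the clean fix is to note that if $v s_{q'} w_0$ (of some length $\le (d-1)(n+m) + m + n = d(n+m)$) is not in $L^{pre}$, then neither is any length-$d(n+m)$ string extending it, so at least one of the $k^{n+m}$ extensions of $v$ to that length is missing. Everything else — the reduction to the case where $w$ is a genuine prefix, the handling of the at-most-$2$-fold ambiguity of base-$k$ representations on the countable $k$-rational set — is routine and already licensed by Lemma \ref{automaton_closure} and the countable-stability of the relevant quantities.
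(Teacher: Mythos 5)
Your proposal follows essentially the same route as the paper's proof: the same escape suffix $s_q w_0$ (the paper's $s_q v$) built from a shortest return path to the start/accept state followed by the forbidden word, yielding a gap interval of length at least $k^{-(\ell+m+n)}$, and the same induction giving $|L^{pre}|_{d(n+m)} \le (k^{n+m}-1)^d$ via Lemmas \ref{box_counting_entropy}, \ref{automaton_closure}, and \ref{language_entropy_prefix_limit}. Your added care about padding the escape suffix to length exactly $n+m$ in the counting step, and about the case where $w$ is itself not a prefix of $L$, fills in details the paper leaves implicit, but the argument is the same.
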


Recall that $\mu_H^1$ is the Hausdorff 1-measure, which equals Lebesgue measure on subsets of $\R$.

\begin{lem}\label{measuremin}
Suppose that $X\subseteq [0,1]$ is a $k$-automatic set with Hausdorff dimension 1, and $X$ is recognized by B\"uchi automaton $\calA$.
If $\mu_H^1(X \cap I)>0$ for every interval $I \subseteq [0,1]$ with $k$-rational endpoints, then there is $\epsilon>0$ such that for each such $I$ we have $\mu_H^1(X \cap I) \geq \epsilon \cdot \operatorname{diam}(I)$.
\begin{proof}
Let $m$ be the number of states in $\calA$.
By hypothesis the Hausdorff dimension of $X \cap I$ is 1 for every interval $I \subseteq [0,1]$ with $k$-rational endpoints, and $\mu_H^1(X \cap I)$ is positive.
This means that for each prefix $w$ of $L(\calA)$, there exists a strongly connected component $\calC_q(\calA)$ that contains an accept state $q$ and for which the image under $V_k$ of the set of words with prefix $w$ that are accepted via a run contained in $\calC_q(\calA)$ has positive Hausdorff 1-measure.
In particular, there is a word $v$ with length at most $m+|w|$ such that $v$ extends $w$ and $\mu_H^1(\nu_k(vC_q(A)^{\omega}))>0$, where $C_q(A)$ is the cycle language for $\calC_q(\calA)$, assuming without loss of generality that a run of $v$ terminates at state $q$.
We set $a$ to be the minimum of $\mu_H^1(V_k(C_q(\calA)))$ where $q$ ranges over all states in a strongly connected component of $\calA$ that contains an accept state and has positive Hausdorff 1-measure.
We let $\epsilon = a \cdot k^{-m}$, and conclude that for each $I \subseteq [0,1]$, we have $\mu_H^1(X \cap I) \geq \epsilon \cdot \operatorname{diam}(I)$, as desired.
\end{proof}
\end{lem}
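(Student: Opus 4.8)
The plan is to reduce everything to one uniform estimate for $k$-adic intervals together with a cheap geometric comparison. Call an interval \emph{$k$-adic of level $\ell$} if it has the form $J_w=[\nu_k(w0^\omega),\nu_k(w(k-1)^\omega)]=[\nu_k(w0^\omega),\nu_k(w0^\omega)+k^{-\ell}]$ for some $w\in[k]^\ell$; such intervals have $k$-rational endpoints and diameter $k^{-\ell}$. It suffices to produce a constant $c>0$ with $\mu_H^1(X\cap J_w)\ge c\,k^{-|w|}$ for every $w$. Indeed, if $I$ has $k$-rational endpoints and $\operatorname{diam}(I)=\delta$, pick $\ell$ with $k^{-\ell}\le\delta<k^{-\ell+1}$; then $\delta\ge k\cdot k^{-\ell-1}\ge 2k^{-\ell-1}$, so the level-$(\ell+1)$ grid places a full cell $J\subseteq I$ with $\operatorname{diam}(J)=k^{-\ell-1}>k^{-2}\delta$, and $\mu_H^1(X\cap I)\ge\mu_H^1(X\cap J)\ge c\operatorname{diam}(J)>ck^{-2}\operatorname{diam}(I)$. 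So the lemma follows with $\epsilon=ck^{-2}$.

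Fix $w\in[k]^\ell$ and set $J=J_w$. A short digit comparison shows that every point of $J$ strictly between its endpoints has \emph{all} of its $k$-representations beginning with $w$; hence, with $L'=\{v:wv\in L(\calA)\}$, we get $\nu_k(wL')\subseteq X\cap J\subseteq\nu_k(wL')\cup\{\text{two endpoints}\}$, so by hypothesis $\mu_H^1(\nu_k(wL'))=\mu_H^1(X\cap J)>0$. Writing $Q_w$ for the set of states reachable from a start state of $\calA$ on input $w$ and $\calA^q$ for $\calA$ with start state moved to $q$, we have $L'=\bigcup_{q\in Q_w}L(\calA^q)$, so $\nu_k(wL')$ is a finite union of affine copies of the sets $V_k(\calA^q)$; by monotonicity there is $q_0\in Q_w$ with $\mu_H^1(V_k(\calA^{q_0}))>0$.

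The core step is to extract from $\calA^{q_0}$ a genuine accept state $f$, reachable from $q_0$, with $\mu_H^1\big(\nu_k(C_f(\calA)^\omega)\big)>0$. This needs nothing but countable subadditivity of the outer measure $\mu_H^1$ (no additivity identity, and $\calA$ need not be made unambiguous, although one could instead route this step through Theorem~\ref{hausdorff_measure_all_key_states} after applying Fact~\ref{unambiguous_equivalence}). Every accepting run of $\calA^{q_0}$ is eventually confined to a single strongly connected component that contains an accept state, so $V_k(\calA^{q_0})$ is a finite union over such components $C$ of the valuation sets $D_C$ of words admitting such a run confined to $C$; pick $C_0$ with $\mu_H^1(D_{C_0})>0$. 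Grouping the words of $D_{C_0}$ by the first state $q'\in C_0$ entered and by the finite prefix used to reach it exhibits $D_{C_0}$ as a countable union of affine copies of $V_k(\calB_{q'})$, where $\calB_{q'}$ is the (strongly connected) sub-automaton on the states of $C_0$ with start $q'$ and accept set $F\cap C_0$; subadditivity gives a $q'$ with $\mu_H^1(V_k(\calB_{q'}))>0$. Then $V_k(\calB_{q'})=\bigcup_{f\in F\cap C_0}V_k(\calB_{q'}^{\{f\}})$, and decomposing each $V_k(\calB_{q'}^{\{f\}})$ by the path from $q'$ to $f$ (a cycle at $f$ necessarily stays in $C_0$) presents it as a countable union of affine copies of $\nu_k(C_f(\calA)^\omega)=V_k(\calC_f(\calA))$ (the last equality by Corollary~\ref{dimension_equiv_cycle}); one more application of subadditivity yields the desired $f\in F\cap C_0$ with $\mu_H^1(V_k(\calC_f(\calA)))>0$.

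To close, let $p_0$ be a shortest path from $q_0$ to $f$ in $\calA$; being simple it has length at most $m=|Q|$. Since $f\in F$, every word $wp_0c$ with $c\in C_f(\calA)^\omega$ is accepted and valuates into $J$, so $\nu_k(wp_0C_f(\calA)^\omega)\subseteq X\cap J$ is an affine copy of $V_k(\calC_f(\calA))$ scaled by $k^{-(\ell+|p_0|)}$, whence $\mu_H^1(X\cap J)\ge k^{-(\ell+m)}\mu_H^1(V_k(\calC_f(\calA)))$. Setting $a=\min\{\mu_H^1(V_k(\calC_q(\calA))):q\in Q,\ \mu_H^1(V_k(\calC_q(\calA)))>0\}$ — a minimum over a finite nonempty set, hence positive — gives $\mu_H^1(X\cap J)\ge a k^{-m}\,\operatorname{diam}(J)$, and combined with the reduction above this proves the lemma with $\epsilon=ak^{-m-2}$. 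The point that requires care is that the state one loops at must itself be accepting, not merely lie in a component containing an accepting state (otherwise the looping words need not belong to $X$); and the constant $a$ must be built from the finitely many strongly connected components of $\calA$, so that it is independent of $I$ — which is exactly what finiteness of $Q$ provides.
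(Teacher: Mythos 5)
Your proof is correct and follows essentially the same strategy as the paper's: from any prefix, reach within at most $m$ further characters an accept state whose cycle language has positive Hausdorff $1$-measure, and take $\epsilon$ to be $k^{-m}$ times the minimum of these finitely many positive measures (your extra factor $k^{-2}$ from the reduction to $k$-adic cells is a harmless refinement). Your version is more carefully justified than the paper's — in particular the subadditivity argument extracting an accepting state $f$ with $\mu_H^1(\nu_k(C_f(\calA)^\omega))>0$ makes explicit a step the paper merely asserts.
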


\begin{thm}
Suppose $X \subseteq [0,1]^n$ is $k$-automatic.  
There exists a set $A \subseteq [0,1]$ definable in $(\R, <,+, X)$ such that $d_B(A) \neq d_H(A)$ if and only if either a unary Cantor set is definable in $(\R,<,+,X)$ or a set that is both dense and codense on an interval is definable in $(\R,<,+,X)$.

\begin{proof}
$(\impliedby)$ Suppose that $C$, a unary Cantor set, is definable in the structure $(\R,<,+,X)$.
Let $A$ be the set of endpoints of the complementary intervals (e.g. the endpoints of the middle thirds that are removed in constructing the ternary Cantor set), which form a countable set whose closure is the entire Cantor set.
This follows since each element of a Cantor set can be approximated by endpoints of the complement within $[0,1]$.
We conclude that $d_H(A) = 0$ and $d_B(A) = d_B(\overline{A}) \neq 0$ by Lemma 6.7 in \cite{BG21}.

Now suppose that $D$ is a set that is dense and codense in an interval $J \subseteq [0,1]$, and is definable in $\calR_k$.
Note that if there exists any interval $I \subseteq J$ such that $d_H(D \cap I) <1$, then because $D$ is dense in $J$ and hence also in $I$, we conclude that $d_B(D\cap I) = d_H(\overline{D} \cap I)=1 > d_H(D\cap I)$, and we are done.
Similarly if $d_H(\overline{D} \setminus D \cap I)<1$ for some interval $I \subseteq J$, since $D$ is codense in $J$. 

The only case remaining is the one in which for every interval $I \subseteq J$, we know $d_H(D\cap I)=1$ and $d_H(\overline{D} \setminus D\cap I)=1$.
If this is the case then both $\calA$, the automaton that recognizes $D$, and $\calA_c$, the automaton that recognizes $\overline{D}\setminus D$, each have a strongly connected component containing an accept state whose cycle language has the same Hausdorff dimension as the whole automaton, namely Hausdorff dimension 1.
Suppose that $S_q(\calA)$ is the strongly connected automaton that recognizes the cycle language of state $q$ in $\calA$ that has the same Hausdorff dimension as $D$. 
Similarly suppose that $S_{q'}(\calA_c)$ is the strongly connected automaton that recognizes the cycle language of state $q'$ that has the same Hausdorff dimension as $\overline{D} \setminus D$.

If the image of $S_q(\calA)$ under $V_k$ is nowhere dense in $[0,1]$, then $S_q(\calA)$ satisfies the hypotheses of Lemma \ref{loglem}, since being nowhere dense necessitates that at least one finite word is not a prefix of the language.
Similar logic holds for $S_{q'}(\calA_c)$.
So by Lemma \ref{loglem}, they would have box-counting dimension less than one, and hence their closures would as well. 
This would contradict our assumption that they each respectively witness the Hausdorff dimensions of $\calA$ and $\calA_c$ being one, since Hausdorff dimension is bounded above by box-counting dimension, so $S_q(\calA)$ and $S_{q'}(\calA_c)$ cannot have box-counting dimension less than one.

Therefore $S_q(\calA)$ and $S_{q'}(\calA_c)$ both must recognize somewhere dense sets. 
Hence the closure of each must contain an interval and thus has positive Lebesgue measure.
By Corollary \ref{hausdorff_measure_closure} we know that both $S_q(\calA)$ and $S_{q'}(\calA_c)$ have the same Hausdorff 1-measure as their closures (as B\"uchi automata).
So by Lemma \ref{hausdorff_measure_one_key_state}, both $D$ and $\overline{D}\setminus D$ have positive Lebesgue measure, which equals Hausdorff 1-measure, on every subinterval $I \subseteq J$.
In particular, both $D \cap I$ and $(\overline{D}\setminus D) \cap I$ have positive Lebesgue measure for every $I \subseteq J$.

Fixing one such $I$, we may assume without loss of generality that $\overline{D}$ is an interval.
Applying Lemma \ref{measuremin}, we know that both $D\cap I$ and $\overline{D}\setminus D \cap I$ have positive Lebesgue measure bounded uniformly below by $\epsilon \cdot \operatorname{diam}(I)$ and $\epsilon ' \cdot  \operatorname{diam}(I)$ respectively, for fixed $\epsilon, \epsilon '>0$ given to us by Lemma \ref{measuremin}.
Since $\overline{D}\setminus D \cap I$ is a subset of the complement of $D\cap I$, we conclude $\epsilon' \leq 1-\epsilon$, and also get the following:
$$\epsilon' \cdot \operatorname{diam}(I) \leq \mu_H^1(D \cap I) \leq \epsilon \operatorname{diam}(I).$$
By the Lebesgue density theorem, since $D \cap I$ is dense in $I$ for each $I$ with $k$-rational endpoints, we know that
$$\lim_{\operatorname{diam}(I) \to 0} \frac{\mu_H^1(D \cap I)}{\operatorname{diam}(I)}$$
exists and equals 1.
Yet we observe that $\epsilon' \leq \frac{\mu_H^1(D \cap I)}{\operatorname{diam}(I)} \leq \epsilon$, so we must have $\epsilon = 1$.
But we conclude the same holds for $ \frac{\mu_H^1(\overline{D}\setminus D \cap I)}{\operatorname{diam}(I)}$, so we also need $\epsilon' =1$.
This contradicts the fact that $\epsilon' \leq 1 - \epsilon$, and we are done.

$(\implies)$ Suppose that there is some set $A \subseteq [0,1]$ definable in $(\R,<,+,X)$ such that $d_B(A) \neq d_H(A)$.
Then it must be the case that $A \neq \overline{A}$.
Let $\calA$ be a trim B\"uchi automaton recognizing $A$.
Note that we know $A$ has no interior.
Suppose it is not the case that $A$ is somewhere dense, i.e. $A$ is nowhere dense.
Then, we may pass from $A$ to $\overline{A}$, and since $A$ is nowhere dense we know that $\overline{A}$ is nowhere dense as well.

We know that $d_B(\overline{A})=d_B(A)>0$; otherwise, since $d_H(A) \leq d_B(A)$, they would have to agree on $A$.
If $d_B(\overline{A}) = 1$, then $d_H(\overline{A})=1$ by Theorem \ref{closed_equality}.
This implies that there is a state $q$ in $\overline{\calA}$ such that $d_H(V_k(S_q(\overline{\calA})))=1$ by Lemma \ref{strongly_connected_hausdorff_closure}.
In order for $\overline{A}$ to be nowhere dense, it must be the case that each strongly connected component of $\overline{\calA}$ omits at least one string of finite length as a prefix.
Otherwise, every finite-length string is the prefix of some string in $C_q(\overline{\calA})$.
Since there exists a string $w \in [k]^*$ such that $\nu_k(wC_q(\overline{\calA})^{\omega}) \subseteq \overline{A}$, 
this would imply that $\overline{A}$ is dense in (and in fact, since $\overline{A}$ is closed, contains) the interval $\nu_k(w[k]^*)$, contradicting that $\overline{A}$ is nowhere dense.
So for each state $q$ in $\overline{\calA}$, let $v_q \in [k]^*$ be a word omitted from the prefixes of $C_q(\overline{\calA})$ whose length is minimal among such finite words.
By Lemma \ref{loglem}, we conclude that $V_k(S_q(\overline{\calA}))$ has Hausdorff dimension less than $1$ for all states $q$ in $\overline{\calA}$.
So by Lemma \ref{hdim}, we know that $d_H(\overline{A})$ is the maximum of $d_H(V_k(S_q(\overline{\calA})))$ for all states $q$ in $\overline{\calA}$,
and we conclude that $1>d_H(\overline{A})>0$. 
By \cite{BG21}, there is a definable unary Cantor set in $(\R,<,+,X)$.

In the case that $A$ is somewhere dense, say on interval $I \subseteq [0,1]$, suppose it is not also codense in $I$.
Then some subinterval of $I$ is contained entirely in $A$, so $d_H(A) =1=d_B(A)$, a contradiction.
\end{proof}
\end{thm}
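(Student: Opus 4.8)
The plan is to prove both directions by carefully relating the failure of $d_B(A) = d_H(A)$ for some definable $A$ to the structural dichotomy ``nowhere dense with a self-similar piece of intermediate dimension'' versus ``somewhere dense but codense''. For the ($\Leftarrow$) direction, I would handle the two disjuncts separately. If a unary Cantor set $C$ is definable, take $A$ to be the set of endpoints of the complementary intervals of $C$; this is a definable countable set (it is definable from $C$ and the order), so $d_H(A) = 0$, while $\overline{A} = C$ has positive box-counting dimension since $C$ is perfect, and $d_B(A) = d_B(\overline{A})$ by stability of box-counting dimension under closure — invoking the relevant result from \cite{BG21} to get $d_B(C) \ne 0$. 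If instead $D$ is definable and is dense and codense on an interval $J$, I would argue that \emph{some} definable subset of $D$ (possibly $D$ itself, or $D$ restricted to a subinterval) witnesses unequal dimensions. The cheap cases are when $d_H(D \cap I) < 1$ for some interval $I \subseteq J$ (then density forces $d_B(D \cap I) = 1$) or symmetrically $d_H(\overline{D} \setminus D \cap I) < 1$ for some $I$. The hard case — and I expect this to be the main obstacle — is when $d_H(D \cap I) = d_H((\overline{D}\setminus D) \cap I) = 1$ for every $I \subseteq J$. Here I would argue via the automata: both the automaton $\calA$ for $D$ and the automaton $\calA_c$ for $\overline{D}\setminus D$ must have a strongly connected component with an accept state whose cycle language has Hausdorff dimension $1$; such a component cannot have nowhere-dense image (else Lemma \ref{loglem} forces box-counting dimension below $1$, contradicting that it witnesses dimension $1$), so its image is somewhere dense, its closure contains an interval, and by Corollary \ref{hausdorff_measure_closure} it has positive Hausdorff $1$-measure equal to that of its closure. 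Feeding this through Lemma \ref{hausdorff_measure_one_key_state} shows both $D \cap I$ and $(\overline{D}\setminus D)\cap I$ have positive Lebesgue measure on every $I \subseteq J$; then Lemma \ref{measuremin} gives uniform lower bounds $\epsilon\cdot\operatorname{diam}(I)$ and $\epsilon'\cdot\operatorname{diam}(I)$ with $\epsilon' \le 1 - \epsilon$. Finally, the Lebesgue density theorem applied to $D \cap I$ (dense, so density $1$ a.e.) combined with the two-sided bound $\epsilon' \le \mu_H^1(D\cap I)/\operatorname{diam}(I) \le \epsilon$ forces $\epsilon = 1$, and the symmetric argument forces $\epsilon' = 1$, contradicting $\epsilon' \le 1 - \epsilon$; so this last case is vacuous and we are always done.

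For the ($\Rightarrow$) direction, suppose some definable $A \subseteq [0,1]$ has $d_B(A) \ne d_H(A)$; then $A \ne \overline{A}$ and $A$ has empty interior (an interval would have equal dimensions). I would split on whether $A$ is nowhere dense or somewhere dense. If $A$ is nowhere dense, pass to $\overline{A}$, which is also nowhere dense and definable, with $d_B(\overline{A}) = d_B(A) > 0$ (strictly positive, else $d_H(A) \le d_B(A) = 0$ would force agreement). If $d_B(\overline{A}) = 1$ then by Theorem \ref{closed_equality} $d_H(\overline{A}) = 1$, so by Lemma \ref{strongly_connected_hausdorff_closure} some strongly connected component of $\overline{\calA}$ has image of Hausdorff dimension $1$; but nowhere-density of $\overline{A}$ forces each component to omit some finite prefix, so Lemma \ref{loglem} gives box-counting dimension (hence Hausdorff dimension) strictly below $1$ for every component's image — contradiction via Lemma \ref{hdim}. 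Therefore $0 < d_H(\overline{A}) = d_B(\overline{A}) < 1$, and $\overline{A}$ is a closed set with no interior, no isolated points (it is the closure of an infinite set — here I should double-check that it has no isolated points, which follows because $A \ne \overline{A}$ and $A$ is $k$-automatic, so the limit structure has no isolated points), and intermediate dimension; by \cite{BG21} this yields a definable unary Cantor set. If instead $A$ is somewhere dense, say on $I$, and is \emph{not} codense in $I$, then some subinterval of $I$ lies in $A$, contradicting empty interior; so $A$ is codense on $I$, i.e. dense and codense on an interval, and we are done.

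The structural heart of the argument is the hard case of ($\Leftarrow$): showing that a set which is dense, codense, and ``full-dimensional on both sides'' on every subinterval cannot be definable, by pushing the positive-measure conclusion through the key-state decomposition and then deriving a contradiction from the Lebesgue density theorem. I would want to be careful that the uniform constants $\epsilon, \epsilon'$ really are produced by Lemma \ref{measuremin} after reducing to the case $\overline{D} = J$ is an interval, and that the density-theorem limit is genuinely $1$ (which requires $D \cap I$ dense in $I$ for all $k$-rational $I$, as supplied by the standing hypothesis of that sub-case). The remaining steps — the endpoint-set construction for the Cantor disjunct, and the empty-interior / somewhere-dense bookkeeping in ($\Rightarrow$) — are routine once the lemmas from Sections \ref{measure} and \ref{models} are in hand.
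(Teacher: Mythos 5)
Your proposal is correct and follows essentially the same route as the paper: the same endpoint-set construction for the Cantor disjunct, the same three-case analysis for the dense/codense disjunct culminating in the Lebesgue-density contradiction via Lemmas \ref{loglem}, \ref{hausdorff_measure_one_key_state}, and \ref{measuremin}, and the same nowhere-dense/somewhere-dense split in the forward direction. The only addition is your (reasonable) flag about verifying that $\overline{A}$ has no isolated points before invoking \cite{BG21}, a point the paper's proof leaves implicit.
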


\end{document}